\newtheoremstyle{note}%
{}{}%
{\upshape}{}%
{\bfseries}{.}%
{ }%
{\thmnumber{#2}}
\newtheorem{thm}{Theorem}[section]
\newtheorem{prop}[thm]{Proposition}
\newtheorem{lem}[thm]{Lemma}
\theoremstyle{definition}
\newtheorem{definition}[thm]{Definition}
\newtheorem{construction}[thm]{Construction}
\newtheorem{example}[thm]{Example}
\newtheorem{remark}[thm]{Remark}
\newtheorem{para-todo}[thm]{Todo}
\theoremstyle{note}
\newcommand{\R}{\mathbb{R}}
\newcommand{\ord}[1]{\lbrack #1 \rbrack}
\newcommand{\op}{\textup{op}}
\newcommand{\cat}[1]{\mathcal{#1}}
\newcommand{\Truss}{\varmathbb{T}}
\newcommand{\ETruss}{\textup{ET}}
\newcommand{\Mesh}{\varmathbb{M}}
\newcommand{\BMesh}{\textup{BM}}
\newcommand{\EMesh}{\textup{EM}}
\newcommand{\IntOpen}{(-1, 1)}
\newcommand{\GrContr}{\textup{Gr}^-}
\newcommand{\FinOrd}{\Delta}
\newcommand{\StrictInt}{\nabla}
\newcommand{\Cat}{\mathcal{C}\textup{at}}
\newcommand{\Fun}{\textup{Fun}}
\newcommand{\Hom}{\textup{Hom}}
\newcommand{\Map}{\textup{Map}}
\newcommand{\Strat}{\mathcal{S}\textup{trat}}
\newcommand{\StratPre}{\textup{Strat}}
\newcommand{\sSet}{\textup{sSet}}
\newcommand{\Pos}{\textup{Pos}}
\renewcommand{\Top}{\textup{Top}}
\newcommand{\DeltaStrat}[1]{\textup{st}\ord{#1}}
\newcommand{\DeltaTop}[1]{{|\Delta\ord{#1}|}}
\newcommand{\HornTop}[2]{{|\Lambda_{#2}\ord{#1}|}}
\newcommand{\CatInfty}{\Cat_\infty}
\newcommand{\Space}{\mathcal{S}\textup{pace}}
\newcommand{\PSh}{\mathcal{P}}
\newcommand{\Sing}{\textup{Sing}}
\newcommand{\SingStrat}{\mathbb{S}\textup{ing}}
\newcommand{\sing}{\textup{sing}}
\newcommand{\reg}{\textup{reg}}
\newcommand{\id}{\textup{id}}
\newcommand{\Exit}{\textup{Exit}}
\newcommand{\pullback}{\ar["\lrcorner", phantom, very near start, dr]}
\tikzstyle{singularity}=[fill=black, draw=black, shape=circle, minimum size=3pt, inner sep=0pt, outer sep=0pt]
\tikzstyle{dual-singularity}=[fill=gray, draw=gray, shape=circle, minimum size=3pt, inner sep=0pt, outer sep=0pt]
\tikzstyle{truss-dim}=[fill=none, draw=none, shape=circle, inner sep=0pt, outer sep=0.6pt, font={\tiny}]
\tikzstyle{generator-label}=[fill=white, draw=black, shape=circle, inner sep=1pt, outer sep=0pt, font={\tiny}]
\tikzstyle{braid-mask}=[fill=white, draw=none, shape=circle]
\tikzstyle{dual-edge}=[-, draw=gray, line width=0.65pt]
\tikzstyle{edge}=[-, line width=0.65pt]
\tikzstyle{horizontal-edge}=[-, draw=black, line width=0.65pt, dotted]
\tikzstyle{edge-arrow}=[->, line width=0.5pt]
\title{Framed Combinatorial Topology with Labels in $\infty$-Categories}
\author{Lukas Heidemann}
\tikzset{
  wire/.style={solid, thick, line cap = round},
  wire-mask/.style={solid, line width = 2mm, white},
  vertex/.style={circle, black},
  boundary/.style={dashed, gray},
  background/.style={gray!20}
}
\begin{document}
\maketitle

\begin{abstract}
  Framed combinatorial topology is a recent approach to tame geometry which expresses higher-dimensional stratified spaces via tractable combinatorial data.
  The resulting theory of spaces is well-behaved and computable.
  In this paper we extend FCT by allowing labelling systems of meshes and trusses in $\infty$-categories, and build an alternative model of FCT by constructing $\infty$-categories that classify meshes and trusses.
  This will serve as the foundation for future work on models of higher categories based on generalised string diagrams
  and the study of generalised tangles.
\end{abstract}

\tableofcontents*

\section{Introduction}

Framed combinatorial topology~\cite{fct} is a recent approach to tame geometry.
Well-behaved stratified spaces are equipped with a canonical ordering of coordinate dimensions, called a framing,
leading to a very tractable theory.
Meshes represent a stratified space as a sequence of $1$-dimensional bundles, each adding one additional coordinate in the order of the framing;
trusses are purely combinatorial representations of framed isomorphism classes of meshes which faithfully capture all the geometric structure of meshes in a form that is amenable to computer implementation.
In this paper we extend FCT with labels in $\infty$-categories.

\begin{figure}[h]
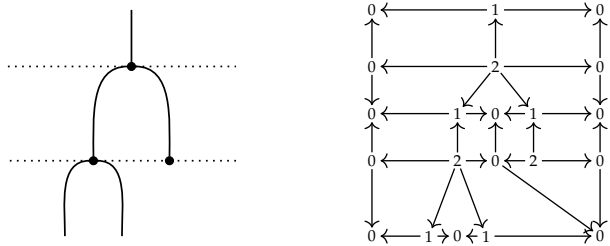

  \centering
  \tikzfig{diagram-example-2d}
  \qquad\qquad
  \tikzfig{truss-example-2d}
  \caption{A string diagram expressed both geometrically as a mesh and combinatorially as a truss.
  The two representations are equivalent up to a contractible choice of diagram layout.\label{fig:diagram-mesh-truss}}
\end{figure}

\textbf{Motivation.}
A central application of FCT is in models of higher-dimensional categories
based on a generalisation of string diagrams to arbitrary dimensions~\cite{Dorn_2022, zigzag-contraction, Heidemann_2022}.
Meshes detect critical points of braidings and higher-dimensional changes in relative positioning (see Figure~\ref{fig:mesh-detect-braid}),
making them useful in the formulation of a semi-strict theory of $(\infty, n)$-categories with weak interchange laws arising from geometry.
FCT helps translate between geometric representations of diagrams and a combinatorial representation based on trusses (see Figure~\ref{fig:diagram-mesh-truss}), which has been implemented in the proof assistant \url{homotopy.io}.

\begin{figure}[h]
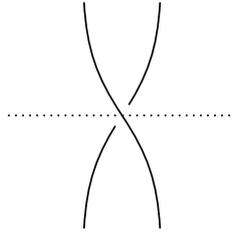

  \centering
  \tikzfig{mesh-detect-braid}
  \caption{Framed combinatorial topology detects the critical point in the braid in which the two wires move across each other.
  This generalises to combinatorially capture the weak structure of higher dimensional categories, such as the Yang-Baxterator, the Zamolodchikov tetrahedron, etc.\label{fig:mesh-detect-braid}}
\end{figure}

However, semi-strict approaches to higher category theory based on string diagrams have not yet been proven sound or complete beyond dimension three~\cite{barrett-graycategories}.
Via trusses and meshes labelled in $\infty$-categorical labels and a unified treatment of truss morphisms and bordisms, the formulation of FCT in this paper is designed to ease comparison with accepted homotopy theoretic models of higher categories.
In particular, we will explore in future work how an inclusion functor from the $n$-fold product $\Delta^{\times n}$ into $n$-trusses enables a comparison with $n$-fold Segal spaces~\cite{rezk2001model, lurie2008classification, lurie2009infinity, barwick2011unicity}.

FCT also has promising applications to geometry and singularity theory,
beginning with the study of manifold diagrams and tame tangles laid out in~\cite{dorn2022manifold}.
Through $\infty$-categorical labels, we can express local structures imposed on the normal and tangent bundles of manifolds realised as meshes.
Together with the interpretation of meshes as higher-dimensional string diagrams, we believe that this is a natural context to study generalised tangles.

Since this paper is a foundational paper, we will not discuss applications in detail.
For a more detailed discussion of applications, see~\cite{fct, Dorn_2022, Heidemann_2022} and future work of the author.

\textbf{Results.}
Trusses and meshes as defined in~\cite{fct} can be equipped with labels in a $1$-category.
In this paper we demonstrate how this can be generalised to labels in an $(\infty, 1)$-category,
which allows for labels arising from geometric data.
For every $\infty$-category $\cat{C}$ we characterise an $\infty$-category $\Truss(\cat{C})$ such that functors $\cat{B} \to \Truss(\cat{C})$ are equivalent to truss bundles on $\cat{B}$ labelled in $\cat{C}$.
Further we construct an $\infty$-category $\Mesh(\cat{C})$ such that for any (well-behaved)
stratified space $B$ there is an equivalence between functors $\Exit(B) \to \Mesh(\cat{C})$ and mesh bundles on $B$ labelled in $\cat{C}$.
We prove that there is a natural equivalence $\Truss(\cat{C}) \simeq \Mesh(\cat{C})$ and so the theories of trusses and meshes arising from our constructions are equivalent.
Since $\Truss(\cat{C})$ and $\Mesh(\cat{C})$ are $\infty$-categories they can serve as labels themselves, allowing us to obtain $n$-trusses and $n$-meshes $\Truss^n(\cat{C}) \simeq \Mesh^n(\cat{C})$.

\textbf{Related Work.}
The framed combinatorial topology project~\cite{fct} first defined trusses and meshes.
The results in this paper were originally developed as an evolution of the zigzag construction for the project set out in~\cite{zigzag-contraction, Heidemann_2022} which aims to study the theoretical and algorithmic underpinnings of the proof assistant \texttt{homotopy.io} and were arrived at independently.
To avoid a splintering of terminology we have decided to adopt the terms trusses and meshes from the latter, although some care must be taken since our definitions are not exactly equivalent.
We believe that the present paper still constitutes a valuable contribution over the discussion in~\cite{fct} due to its considerably different approach and the capacity to define labels in $\infty$-categories.

Our characterisation of the $\infty$-category of meshes as a sheaf on stratified spaces is 
inspired by the striation sheaves and constructible bundles from~\cite{ayala2018stratified}.
Since the notion of stratified space employed in this paper is more general, we have to rebuild some part of the theory.

\textbf{Acknowledgements.}
The author is grateful to
Christoph Dorn,
Christopher Douglas,
Eric Finster,
Nick Hu,
Chiara Sarti,
Calin Tataru,
Jamie Vicary and
Vincent Wang
for helpful discussions
and to Jamie Vicary in particular for his excellent support as a DPhil supervisor.

\textbf{Notation.}
We write $\ord{n}$ for the finite ordinal with elements $\{ 0, \ldots, n \}$.
$\Delta$ is the category of finite non-empty ordinals and order-preserving maps between them. 
The category of strict intervals $\StrictInt$ is the subcategory of $\Delta$ consisting of the ordinals $\ord{n}$ for $n \geq 1$ and order-preserving maps which preserve the minimum and maximum.
There is an isomorphism $\Delta^\op \cong \StrictInt$.

We use the term $\infty$-category for $(\infty, 1)$-categories in the sense of quasicategories or complete Segal spaces, but aim to remain mostly independent of any particular model.
We let $\CatInfty$ denote the (large) $\infty$-category of small $\infty$-categories and $\Space$ the (large) $\infty$-category of spaces.
For any $\infty$-category $\cat{C}$ and pair of objects $X, Y$ in $\cat{C}$ we write $\cat{C}(X, Y)$ for the space of maps from $X$ to $Y$.
When $\cat{C}, \cat{D}$ are $\infty$-categories, then $\Fun(\cat{C}, \cat{D})$ denotes the $\infty$-category of functors $\cat{C} \to \cat{D}$ whereas (in accordance with the notation for mapping spaces) the space $\CatInfty(\cat{C}, \cat{D})$ is the core of $\Fun(\cat{C}, \cat{D})$.
Given a functor $F : \cat{C}^\op \to \CatInfty$ we denote by $\GrContr(F) \to \cat{C}$ the contravariant Grothendieck construction or unstraightening.
We use $\downarrow$ to denote comma and slice $\infty$-categories.

\section{Trusses}

The fundamental ingredient in the definition of trusses are truss bundles,
which are functors of $\infty$-categories which arise as pullbacks of the functor $\ETruss \to \FinOrd$ defined as follows.

\begin{definition}\label{def:truss}
  The objects of the category $\ETruss$ are \textit{singular objects} of
  the form $s_i \ord{n}$ for $0 \leq i < n$ and \textit{regular objects} $r_i
    \ord{n}$ for $0 \leq i \leq n$.
  The morphisms of $\ETruss$ are defined as
  \begin{align}
    \label{eqn:truss-regular-regular}
    \ETruss(r_i \ord{n}, r_j \ord{m}) & = \{ \alpha : \ord{n} \to \ord{m} \mid \alpha(i) = j \}                       \\
    \label{eqn:truss-singular-singular}
    \ETruss(s_i \ord{n}, s_j \ord{m}) & = \{ \alpha : \ord{n} \to \ord{m} \mid \alpha(i) \leq j < \alpha(i + 1) \}    \\
    \label{eqn:truss-singular-regular}
    \ETruss(s_i \ord{n}, r_j \ord{m}) & = \{ \alpha : \ord{n} \to \ord{m} \mid \alpha(i) \leq j \leq \alpha(i + 1) \} \\
    \label{eqn:truss-regular-singular}
    \ETruss(r_i \ord{n}, s_j \ord{m}) & = \varnothing
  \end{align}
  and compose as the underlying order-preserving maps. There is a
  canonical forgetful functor $\ETruss \to \FinOrd$ called the
  \textit{universal truss bundle} which sends both
  $s_i \ord{n}$ and $r_i \ord{n}$ to $\ord{n}$.
\end{definition}

\begin{example}
  The fibre of $\ETruss \to \FinOrd$ over an ordinal $\ord{n}$ is a sequence of spans
  \[
    \begin{tikzcd}
      r_0\ord{n} &
      s_0\ord{n} \ar[l] \ar[r] &
      r_1\ord{n} &
      \cdots \ar[l] \ar[r] &
      r_{n - 1}\ord{n} &
      s_{n - 1}\ord{n} \ar[l] \ar[r] &
      r_n\ord{n}
    \end{tikzcd}
  \]
  alternating between regular and singular objects.
  In particular, the fibre over the terminal ordinal $\ord{0}$ consists of a single object $r_0 \ord{0}$.
\end{example}
\begin{example}
  The fibre of $\ETruss \to \FinOrd$ over an order-preserving map $\alpha : \ord{n} \to \ord{m}$ consists of two sequences of spans of lengths $n$ and $m$, respectively, connected together to form a planar diagram.
  In the following diagrams we omit the arrows from singular to regular objects which arise as composites of other arrows to avoid clutter.
  \begin{enumerate}
    \item
          The outer face map $\ord{1} \to
            \ord{2}$ defined by $(0 \mapsto 1, 1 \mapsto 2)$ embeds a span into a sequence of spans.
          We will use this to define embeddings of a subdiagram into a larger diagram.
          \[
            \begin{tikzcd}
              &&
              r_0\ord{1}
              \ar[d] &
              s_0\ord{1}
              \ar[d]
              \ar[l] \ar[r] &
              r_1\ord{1} \ar[d] \\
              r_0\ord{2} &
              s_0\ord{2} \ar[l] \ar[r] &
              r_1\ord{2} &
              s_1\ord{2} \ar[l] \ar[r] &
              r_2\ord{2}
            \end{tikzcd}
          \]
    \item
          The degeneracy map $\ord{1} \to
            \ord{0}$ merges two regular levels by removing the singular level between them.
          This will be used to model identity cells.
          \[
            \begin{tikzcd}
              r_0\ord{1} \ar[dr] &
              s_0\ord{1} \ar[l] \ar[r] &
              r_1\ord{1} \ar[dl] \\
              &
              r_0\ord{0}
            \end{tikzcd}
          \]
    \item
      The inner face map $\ord{1} \to \ord{2}$ defined by $(0 \mapsto 0, 1 \mapsto 2)$ splits a singular object into two (due to the way maps between singular objects are defined in Equation~\ref{eqn:truss-singular-singular}) and inserts a regular level between them.
          \[
            \begin{tikzcd}
              r_0\ord{1} \ar[d] &&
              s_0\ord{1}
              \ar[dl]
              \ar[dr]
              \ar[ll]
              \ar[rr] 
              &&
              r_1\ord{1}
              \ar[d]
              \\
              r_0\ord{2} &
              s_0\ord{2} \ar[l] \ar[r] &
              r_1\ord{2} &
              s_1\ord{2} \ar[l] \ar[r] &
              r_2\ord{2}
            \end{tikzcd}
          \]
  \end{enumerate}
\end{example}

\begin{definition}\label{def:truss-bundle}
  Let $\cat{B}$ and $\cat{C}$ be $\infty$-categories. The space
  $\Truss(\cat{B}, \cat{C})$ of \textit{truss bundles} over $\cat{B}$
  with labels in $\cat{C}$ is the space of diagrams of $\infty$-categories
  of the form
  \begin{equation}\label{eqn:truss-bundle}
    \begin{tikzcd}
      \cat{C} & \cat{E} \ar[d] \ar[r] \ar[l] \pullback & \ETruss \ar[d] \\
      & \cat{B} \ar[r] & \FinOrd
    \end{tikzcd}
  \end{equation}
  where the square is cartesian and $\ETruss \to \FinOrd$ is the universal truss bundle
  from Definition~\ref{def:truss}.

  A functor $\cat{B}' \to \cat{B}$ between the base $\infty$-categories induces a map of spaces $\Truss(\cat{B}, \cat{C}) \to \Truss(\cat{B}', \cat{C})$ by taking the pullback.
A functor $\cat{C} \to \cat{C}'$ between the label $\infty$-categories induces a map of spaces $\Truss(\cat{B}, \cat{C}) \to \Truss(\cat{B}, \cat{C}')$ by postcomposing the labelling functor $\cat{E} \to \cat{C}$ with $\cat{C} \to \cat{C}'$.
  Together this defines a functor
  $\Truss(-, -): \CatInfty^\op \times \CatInfty \to \Space$.
\end{definition}

Formulated differently, given $\infty$-categories $\cat{B}$ and $\cat{C}$, a point in the
space $\Truss(\cat{B}, \cat{C})$ consists of an $\infty$-category $\cat{E}$, functors
$\cat{B} \to \Delta$, $\cat{E} \to \cat{B}$, $\cat{E} \to \ETruss$ and $\cat{E} \to \cat{C}$ together with a witness that the square
(\ref{eqn:truss-bundle}) commutes such that the square is a pullback square.
We will often denote the truss bundle just by the functor $\cat{E} \to \cat{B}$ where the rest of the structure is clear from context.

In particular, since both pullbacks and maps into the terminal category $*$ are defined up to a contractible choice, the space of
truss bundles $\Truss(\cat{B}, *)$ labelled in $*$ is naturally equivalent to the
space of functors $\CatInfty(\cat{B}, \Delta)$ and so the presheaf $\Truss(-, *)$
is represented by the category $\FinOrd$. To show that $\Truss(\cat{B}, \cat{C})$ is
representable for general labelling $\infty$-categories $\cat{C}$ we will use the
theory of exponentiable fibrations as described in~\cite{ayala2017fibrations} for $\infty$-categories in general or~\cite{lurie-ha} for quasicategories in particular, generalising the $1$-categorical Conduché functors.

\begin{lem}
  Let $\cat{B}$ be a poset and $\cat{C}$ an $\infty$-category.
  Then every truss bundle in $\Truss(\cat{B}, \cat{C})$ is equivalent to one of the form $\cat{E} \to \cat{B}$ where $\cat{E}$ is a poset.
\end{lem}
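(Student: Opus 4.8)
The plan is to compute the total $\infty$-category of a truss bundle over a poset as a pullback and to exploit the fact that the universal truss bundle $\ETruss \to \FinOrd$ is \emph{faithful} to see that this pullback is a poset. First I would record the combinatorial input: by Definition~\ref{def:truss} each morphism set $\ETruss(x,y)$ is, by its very definition, a subset of the set $\FinOrd(\ord n, \ord m)$ of underlying order-preserving maps, and composition is composition of the underlying maps; hence $\ETruss \to \FinOrd$ is a faithful functor of $1$-categories. I would also note, by inspecting the four cases of Definition~\ref{def:truss}, that the only isomorphisms of $\ETruss$ lying over an identity of $\FinOrd$ are identities, and that the only isomorphisms of $\FinOrd = \Delta$ itself are identities.

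Now let a truss bundle in $\Truss(\cat B, \cat C)$ be given, with total $\infty$-category $\cat E$. The pullback witness in diagram~(\ref{eqn:truss-bundle}) provides an equivalence $\cat E \simeq \cat B \times_{\FinOrd} \ETruss =: P$, along which I transport the labelling functor $\cat E \to \cat C$; so it suffices to show that $P$ is a poset. For objects $(b,x), (b',x')$ of $P$ (with $b, b' \in \cat B$ and $x, x' \in \ETruss$ lying over the images of $b, b'$ in $\FinOrd$), since $\cat B$ is a poset there is at most one morphism $b \to b'$, and such a morphism has a unique image in $\FinOrd$; a morphism $(b,x) \to (b',x')$ in $P$ is then precisely a morphism $x \to x'$ in $\ETruss$ lying over that image, of which there is at most one by faithfulness. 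Thus every mapping space $P\big((b,x),(b',x')\big)$ is empty or contractible, so $P$ is thin. Finally, an isomorphism in $P$ projects to isomorphisms in $\cat B$ and in $\ETruss$: the former is an identity since $\cat B$ is a poset, forcing $b = b'$, and the latter then lies over an identity of $\FinOrd$ and so is an identity by the remark above. Hence $P$ is also skeletal, i.e. a poset, which is the desired conclusion. (If one prefers to avoid the skeletality step, one may instead replace the preorder $P$ by its poset reflection, which is equivalent as an $\infty$-category and yields an equivalent truss bundle.)

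The argument is elementary and I do not expect a serious obstacle; the only point that needs a little care is the interaction between the strict and the $\infty$-categorical pullback, namely that $P$, although formed in $\CatInfty$, is genuinely a $1$-category. This holds because its mapping spaces are computed as limits of the discrete mapping spaces of $\cat B$, $\ETruss$ and $\FinOrd$ (equivalently, because $\ETruss \to \FinOrd$ is an isofibration of $1$-categories, as $\FinOrd$ has no non-identity isomorphisms, so the strict pullback of categories already models the homotopy pullback). Once this is in place, the real content is simply the combinatorial observation that the universal truss bundle is faithful.
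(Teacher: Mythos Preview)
Your argument is correct. The paper states this lemma without proof, so there is nothing to compare against; your approach---computing the pullback $\cat{B} \times_{\FinOrd} \ETruss$ and using that $\ETruss \to \FinOrd$ is faithful (with $\FinOrd$ gaunt)---is exactly the right way to see it, and indeed the paper implicitly relies on the same observation later when it asserts in the proof of Lemma~\ref{lem:truss-exponentiable} that $\ETruss \times_{\FinOrd} \Delta\ord{2}$ is a poset.
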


\begin{lem}\label{lem:truss-exponentiable}
  The universal truss bundle $\ETruss \to \FinOrd$ is exponentiable,
  i.e.\ the base change functor
  \[ \CatInfty \downarrow \FinOrd \to \CatInfty \downarrow \ETruss \]
  has a right adjoint.
\end{lem}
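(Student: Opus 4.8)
The plan is to verify exponentiability by recognising $\ETruss \to \FinOrd$ as a \emph{flat} categorical fibration, using the criterion recorded in~\cite{ayala2017fibrations} (and, for quasicategories, in~\cite{lurie-ha}): a functor $p \colon X \to S$ is exponentiable precisely when it is an inner fibration such that, for every $2$-simplex $s_0 \to s_1 \to s_2$ of $S$ and every lift $\tilde s_0 \to \tilde s_2$ of the composite $s_0 \to s_2$ to $X$, the $\infty$-category of factorisations through an object over $s_1$ is weakly contractible. Since $\ETruss \to \FinOrd$ is a functor of ordinary categories it is automatically an inner fibration; a $2$-simplex of $\FinOrd$ is a composable pair $\ord n \xrightarrow{\alpha_1} \ord p \xrightarrow{\alpha_2} \ord m$; and, because $\ETruss$ has at most one morphism between any two objects lying over a fixed map of ordinals, the associated factorisation $\infty$-category is the nerve of the full subposet $\cat{F} \subseteq \ETruss$ spanned by the objects $z$ over $\ord p$ that admit morphisms $\tilde s_0 \to z$ over $\alpha_1$ and $z \to \tilde s_2$ over $\alpha_2$. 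So the statement reduces to the assertion that every such $\cat{F}$ is weakly contractible.

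I would then record two elementary facts. First, the fibre $\ETruss_{\ord p}$ is the zigzag poset $r_0\ord p \leftarrow s_0\ord p \to r_1\ord p \leftarrow \cdots \to r_p\ord p$ computed above, whose Hasse diagram is a path and which has no chain of three distinct elements; consequently any nonempty connected full subposet of it has contractible nerve (it is again a path, or a point), so it suffices to check that each $\cat{F}$ is nonempty and connected. Second, reading off Definition~\ref{def:truss}: the objects over $\ord p$ receiving a morphism from $s_i\ord n$ over $\alpha_1$ are exactly the $r_k\ord p$ with $\alpha_1(i) \le k \le \alpha_1(i+1)$ together with the $s_k\ord p$ with $\alpha_1(i) \le k < \alpha_1(i+1)$ — a contiguous stretch of the fibre zigzag — whereas the objects receiving a morphism from $r_i\ord n$ over $\alpha_1$ consist of the single object $r_{\alpha_1(i)}\ord p$, since there are no morphisms from regular to singular objects.

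The verification that $\cat{F}$ is nonempty and connected then splits into cases by the singular/regular types of $\tilde s_0$ and $\tilde s_2$. If $\tilde s_0$ is regular, or if $\tilde s_2$ is singular, these observations already force $\cat{F}$ to be a single object: when both are regular it is $r_{\alpha_1(i)}\ord p$, compatible with $\tilde s_2$ because $\alpha_2\alpha_1$ is the underlying map of $\tilde s_0 \to \tilde s_2$; when $\tilde s_0 = s_i\ord n$ and $\tilde s_2 = s_j\ord m$ it is the unique index $k$ with $\alpha_1(i)\le k < \alpha_1(i+1)$ and $\alpha_2(k)\le j<\alpha_2(k+1)$, existence and uniqueness following from monotonicity of $\alpha_2$ together with $\alpha(i)\le j<\alpha(i+1)$. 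The pair (regular, singular) is vacuous. The remaining case $\tilde s_0 = s_i\ord n$, $\tilde s_2 = r_j\ord m$ is the substantive one: here $\cat{F}$ is the intersection of the contiguous sub-zigzag on the indices $[\alpha_1(i),\alpha_1(i+1)]$ with the set of objects mapping to $r_j\ord m$ over $\alpha_2$, and using monotonicity of $\alpha_2$ on this range together with $\alpha(i)\le j\le\alpha(i+1)$ one checks that this cuts out another nonempty contiguous arc of the zigzag, hence a connected poset. Assembling the cases shows $\cat{F}$ is always weakly contractible, so $\ETruss \to \FinOrd$ is flat and therefore exponentiable.

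I expect the only genuine work to be the bookkeeping in the singular-to-regular case: one must check that the two constraints carve out a single connected arc of the zigzag rather than isolated pieces, and that it is nonempty, which is exactly where the inequalities of Definition~\ref{def:truss} and the monotonicity of order-preserving maps do their job. The rest is either formal — the flatness criterion and the contractibility of finite paths — or the routine observation that a one-variable monotone inequality has a unique solution.
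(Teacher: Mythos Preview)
Your argument is correct and follows essentially the same route as the paper's proof: both invoke the flatness criterion of~\cite{ayala2017fibrations}, reduce the factorisation $\infty$-category to a full subposet of the zigzag fibre $\ETruss \times_\FinOrd \{\ord{p}\}$ (using that $\ETruss \times_\FinOrd \Delta\ord{2}$ is a poset), and then verify contractibility by a case distinction on the regular/singular types of the endpoints. Your write-up in fact supplies more of the case analysis than the paper, which simply asserts that the subposet ``is weakly contractible by a case distinction''; the extra observation that any nonempty connected full subposet of the fibre zigzag is automatically contractible is a clean way to organise that distinction.
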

\begin{proof}
  We use~\cite[Lemma 1.11.]{ayala2017fibrations} to verify the claim.
  Let $\alpha : \Delta\ord{2} \to \FinOrd$ be a commutative triangle in $\FinOrd$.
  We then need to show that the fibre of the restriction functor
  \begin{equation}\label{eqn:truss-exponentiable-restriction}
    \Fun_{/\FinOrd}(\Delta\ord{2}, \ETruss) \to \Fun_{/\FinOrd}(\Delta\{0 < 2\}, \ETruss)
  \end{equation}
  over any map $h : \Delta\{ 0 < 2 \} \to \ETruss$ is weakly contractible.
  Since $\ETruss \times_\FinOrd \Delta\ord{2}$ is a poset, a factorisation in $\ETruss \times_\FinOrd \Delta\ord{2}$ is uniquely determined by the object it factors through.
  Hence the fibre of (\ref{eqn:truss-exponentiable-restriction}) over any $h$ is isomorphic to a subposet of $\ETruss \times_\Delta \{ \alpha(1) \}$.
  This subposet is weakly contractible by a case distrinction.
\end{proof}

\begin{prop}\label{prop:truss-representable}
  Let $\cat{C}$ be an $\infty$-category. Then the $\infty$-presheaf
  \[ \Truss(-, \cat{C}) : \CatInfty^\op \to \Space \]
  is representable by an $\infty$-category $\Truss(\cat{C})$, i.e.\ there is a natural equivalence
  \[ \Truss(-, \cat{C}) \simeq \CatInfty(-, \Truss(\cat{C})). \]
\end{prop}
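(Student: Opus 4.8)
The plan is to produce the representing $\infty$-category as a pushforward along the universal truss bundle. Write $q : \ETruss \to \FinOrd$ for the universal truss bundle and let $q_* : \CatInfty \downarrow \ETruss \to \CatInfty \downarrow \FinOrd$ be the right adjoint to base change $q^*$ furnished by Lemma~\ref{lem:truss-exponentiable}. Viewing the projection $\ETruss \times \cat{C} \to \ETruss$ as an object of $\CatInfty \downarrow \ETruss$, set
\[ \bigl(\Truss(\cat{C}) \to \FinOrd\bigr) \;:=\; q_*\bigl(\ETruss \times \cat{C} \to \ETruss\bigr). \]
I will show that $\Truss(\cat{C})$ together with this structure map represents $\Truss(-,\cat{C})$.

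First I would exhibit both presheaves as fibred over $\CatInfty(-,\FinOrd)$ and compute the fibres. Postcomposition with $\cat{C} \to *$ gives a natural map $\Truss(-,\cat{C}) \to \Truss(-,*)$, and by the discussion following Definition~\ref{def:truss-bundle} the target is naturally equivalent to $\CatInfty(-,\FinOrd)$. For a fixed $f : \cat{B} \to \FinOrd$, the fibre of $\Truss(\cat{B},\cat{C}) \to \CatInfty(\cat{B},\FinOrd)$ over $f$ consists of the remaining data of a truss bundle lying over $f$; since that data amounts to the pullback $\cat{E} = \cat{B}\times_\FinOrd\ETruss$ equipped with a labelling $\cat{E} \to \cat{C}$, and the space of pullback cones over the cospan $\cat{B}\xrightarrow{f}\FinOrd\xleftarrow{q}\ETruss$ is contractible, this fibre is naturally equivalent to $\CatInfty(\cat{B}\times_\FinOrd\ETruss, \cat{C})$. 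On the other side, postcomposition with the structure map $\Truss(\cat{C}) \to \FinOrd$ realises $\CatInfty(-,\Truss(\cat{C}))$ as fibred over $\CatInfty(-,\FinOrd)$ with fibre over $f$ the mapping space $(\CatInfty\downarrow\FinOrd)\bigl((\cat{B},f), \Truss(\cat{C})\bigr)$ in the slice.

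Next I would compare the fibres using the adjunction $q^* \dashv q_*$. Since $q^*(\cat{B},f) = \bigl(\cat{B}\times_\FinOrd\ETruss \to \ETruss\bigr)$, we obtain
\[ (\CatInfty\downarrow\FinOrd)\bigl((\cat{B},f),\, q_*(\ETruss\times\cat{C})\bigr) \;\simeq\; (\CatInfty\downarrow\ETruss)\bigl(\cat{B}\times_\FinOrd\ETruss,\; \ETruss\times\cat{C}\bigr) \;\simeq\; \CatInfty\bigl(\cat{B}\times_\FinOrd\ETruss,\; \cat{C}\bigr), \]
the last step because a map over $\ETruss$ into a product $\ETruss\times\cat{C}$ is precisely a functor to $\cat{C}$. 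To upgrade this into a genuine natural equivalence of presheaves I would construct the comparison map directly: a truss bundle as in~(\ref{eqn:truss-bundle}) provides $f : \cat{B}\to\FinOrd$, an identification of $\cat{E}$ with $q^*(\cat{B},f)$, and the functor $(\ell, \mathrm{pr}) : \cat{E} \to \ETruss\times\cat{C}$ over $\ETruss$ assembled from the labelling $\ell$ and the map to $\ETruss$; transposing across $q^* \dashv q_*$ yields a map $(\cat{B},f) \to \Truss(\cat{C})$ over $\FinOrd$, hence a functor $\cat{B} \to \Truss(\cat{C})$. This assignment is natural in $\cat{B}$ because base change of truss bundles commutes with pullback and the adjunction transpose is natural, and it is a fibrewise equivalence over $\CatInfty(-,\FinOrd)$ by the display above, hence an equivalence of presheaves.

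I expect the main obstacle to be bookkeeping rather than any single hard step: carefully justifying the decomposition of $\Truss(\cat{B},\cat{C})$ as a total space over $\CatInfty(\cat{B},\FinOrd)$ with the stated fibres (which rests on the contractibility of the space of pullback cones) and matching mapping spaces in the slice $\CatInfty\downarrow\FinOrd$ with adjunction transposes in the $\infty$-categorical setting. One should also confirm that $q_*$ sends the small object $\ETruss\times\cat{C}$ to a bona fide (small) $\infty$-category, so that $\Truss(\cat{C})$ is a legitimate object of $\CatInfty$; this is part of the theory of exponentiable fibrations, and concretely one can check that the fibre of $\Truss(\cat{C}) \to \FinOrd$ over $\ord{n}$ is $\Fun\bigl(\ETruss\times_\FinOrd\{\ord{n}\}, \cat{C}\bigr)$, the functor $\infty$-category out of the corresponding zigzag poset.
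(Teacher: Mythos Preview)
Your proposal is correct and follows essentially the same approach as the paper: construct $\Truss(\cat{C})$ via the right adjoint to pullback along $q$ guaranteed by exponentiability, then compare $\Truss(-,\cat{C})$ and $\CatInfty(-,\Truss(\cat{C}))$ fibrewise over $\CatInfty(-,\FinOrd)$ using contractibility of the space of pullback cones and the $q^* \dashv q_*$ adjunction. The paper phrases the fibrewise comparison in terms of the unstraightening $\GrContr(\Truss(-,\cat{C}))$ as a right fibration over $\CatInfty \downarrow \FinOrd$ rather than directly in terms of fibres of maps of spaces, and describes the representing object via the right adjoint to the composite $\CatInfty \downarrow \FinOrd \to \CatInfty \downarrow \ETruss \to \CatInfty$ rather than explicitly as $q_*(\ETruss \times \cat{C})$, but these are the same construction unpacked differently.
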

\begin{proof}
  Since $\ETruss \to \FinOrd$ is exponentiable, the composite
  \[
    \CatInfty \downarrow \FinOrd \to
    \CatInfty \downarrow \ETruss \to
    \CatInfty
  \]
  of base change and the forgetful functor has a right adjoint
  which sends $\cat{C}$ to an $\infty$-category $\Truss(\cat{C})$ over $\FinOrd$.
  The unstraightening $\GrContr(\Truss(-, \cat{C})) \to \CatInfty$ of the functor $\Truss(-, \cat{C}) \to \Space$ is the $\infty$-category of diagrams of the form (\ref{eqn:truss-bundle}) where the $\infty$-category $\cat{B}$ is allowed to vary.
  The right fibration $\GrContr(\Truss(-, \cat{C})) \to \CatInfty$ then factors as the composition of the right fibration $\GrContr(\Truss(-, \cat{C})) \to {\CatInfty \downarrow \FinOrd}$ which remembers only the map $\cat{B} \to \FinOrd$ followed by the right fibration ${\CatInfty \downarrow \FinOrd} \to \CatInfty$.
  Since for any $\varphi : \cat{B} \to \FinOrd$ the space of
  pullbacks of $\varphi$ along $\ETruss \to \FinOrd$ is
  contractible there is a natural equivalence
  \[
    \GrContr(\Truss(-, \cat{C})) \times_{{\CatInfty \downarrow \FinOrd}} \{\varphi\} \simeq
    \CatInfty(\cat{B} \times_\FinOrd \ETruss, \cat{C}) \simeq
    {(\CatInfty \downarrow \FinOrd)}(\cat{B}, \Truss(\cat{C})).
  \]
  Since this equivalence is natural in $\varphi$, we obtain an equivalence $\GrContr(\Truss(-, \cat{C})) \simeq {(\CatInfty \downarrow \Truss)(\cat{C})}$ over ${\CatInfty \downarrow \FinOrd}$ and by postcomposition with the forgetful functor ${\CatInfty \downarrow \FinOrd} \to \CatInfty$ this equivalence lies over $\CatInfty$.
  By straightening we then obtain a natural equivalence $\Truss(-, \cat{C}) \simeq \CatInfty(-, \Truss(\cat{C}))$.
\end{proof}

Since the $\infty$-presheaf $\Truss(-, \cat{C})$ is natural in the labelling $\infty$-category $\cat{C}$ and $\Truss(\cat{C})$ is defined as the $\infty$-category represented by $\Truss(-, \cat{C})$ it follows that $\Truss(\cat{C})$ itself is natural in $\cat{C}$ and defines an endofunctor $\Truss(-) : \CatInfty \to \CatInfty$.

We have shown that for any $\infty$-category $\cat{C}$ there is an $\infty$-category $\Truss(\cat{C})$ of truss bundles labelled in $\cat{C}$.
In particular $\Truss(\cat{C})$ can itself serve as the $\infty$-category of labels, so that we obtain an $\infty$-category $\Truss(\Truss(\cat{C}))$.
This process can be iterated to obtain $\infty$-categories $\Truss^n(\cat{C})$ for any $n \geq 0$ defined by $\Truss^n(\cat{C}) = \Truss(\Truss^{n - 1}(\cat{C}))$.

\begin{definition}
  Let $n \geq 0$ and $\cat{C}$ an $\infty$-category.
  An \textit{$n$-truss} with labels in $\cat{C}$ is an object of the $\infty$-category $\Truss^n(\cat{C})$.
  A \textit{bordism} of $n$-trusses is a $1$-morphism in $\Truss^n(\cat{C})$.
\end{definition}

The $\infty$-category $\Truss^n(\cat{C})$ admits an alternative characterisation via sequences of truss bundles.

\begin{definition}
  Let $n \geq 0$ and let $\cat{B}, \cat{C}$ be $\infty$-categories. The space
  $\Truss^n(\cat{B}, \cat{C})$ of $n$-fold truss bundles over $\cat{B}$
  with labels in $\cat{C}$ is the space of diagrams of $\infty$-categories of
  the form
  \[
    \begin{tikzcd}
      \cat{C} &
      \cat{E}_n \ar[r, "\pi_n"] \ar[l] &
      \cat{E}_{n - 1} \ar[r, "\pi_{n - 1}"] &
      \cdots \ar[r] &
      \cat{E}_1 \ar[r, "\pi_1"] &
      \cat{E}_0 = \cat{B}
    \end{tikzcd}
  \]
  together with a pullback square of the form
  \[
    \begin{tikzcd}
      \cat{E}_{k + 1} \ar[d, "\pi_k"] \ar[r] \pullback & \ETruss \ar[d] \\
      \cat{E}_k \ar[r] & \FinOrd
    \end{tikzcd}
  \]
  for each $0 \leq k < n$.
  This assembles into a functor $\Truss^n(-, -) : \CatInfty^\op \times \CatInfty \to \Space$ 
  which acts on the base $\infty$-category $\cat{B}$ by pullback of bundles
  and on the labelling $\infty$-category $\cat{C}$ by postcomposition
  as in Definition~\ref{def:truss-bundle}.
\end{definition}

\begin{example}
  Special cases of this definition of $n$-fold truss bundles are
  $\Truss^0(\cat{B}, \cat{C}) \simeq \CatInfty(\cat{B}, \cat{C})$ and
  $\Truss^1(\cat{B}, \cat{C}) \simeq \Truss(\cat{B}, \cat{C})$.
\end{example}

\begin{construction}\label{constr:truss-packing}
  Let $n \geq 0$ and let $\cat{B}, \cat{C}$ be $\infty$-categories.
  The \textit{packing functor}
  \[\Truss^{n + 1}(\cat{B}, \cat{C}) \to \Truss^n(\cat{B}, \Truss(\cat{C})) \]
  sends an $(n + 1)$-fold truss bundle
  \begin{equation}\label{eqn:truss-packing-unpacked}
    \begin{tikzcd}
      \cat{C} &
      \cat{E}_{n + 1} \ar[r, "\pi_{n + 1}"] \ar[l] &
      \cat{E}_{n} \ar[r, "\pi_n"] &
      \cdots \ar[r] &
      \cat{E}_1 \ar[r, "\pi_1"] &
      \cat{E}_0 = \cat{B}
    \end{tikzcd}
  \end{equation}
  to the $n$-fold truss bundle
  \begin{equation}\label{eqn:truss-packing-packed}
    \begin{tikzcd}
      {} &
      \Truss(\cat{C}) &
      \cat{E}_n \ar[r, "\pi_n"] \ar[l] &
      \cdots \ar[r] &
      \cat{E}_1 \ar[r, "\pi_1"] &
      \cat{E}_0 = \cat{B}
    \end{tikzcd}
  \end{equation}
  where $\cat{E}_n \to \Truss(\cat{C})$ is determined by applying the equivalence
  of Proposition~\ref{prop:truss-representable}.
\end{construction}

\begin{prop}
  The packing functor from Construction~\ref{constr:truss-packing} is a natural
  equivalence. In particular the presheaf $\Truss^n(-, \cat{C})$ is
  represented by the $\infty$-category $\Truss^n(\cat{C})$
\end{prop}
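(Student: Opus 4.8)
The plan is to establish that the packing functor is a natural equivalence of $\infty$-presheaves $\Truss^{n+1}(-,\cat{C}) \simeq \Truss^n(-,\Truss(\cat{C}))$ for every $n \geq 0$ and every labelling $\infty$-category $\cat{C}$, and then to deduce the representability statement by a short induction on $n$. Indeed, given this, composing the packing equivalences at the successive labels $\cat{C}, \Truss(\cat{C}), \dots, \Truss^{n-1}(\cat{C})$ yields
\[
  \Truss^n(-,\cat{C}) \;\simeq\; \Truss^{n-1}(-,\Truss(\cat{C})) \;\simeq\; \cdots \;\simeq\; \Truss^0(-,\Truss^n(\cat{C})) \;=\; \CatInfty(-,\Truss^n(\cat{C})),
\]
where the final identification is the first clause of the preceding Example, so that $\Truss^n(-,\cat{C})$ is represented by $\Truss^n(\cat{C})$. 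The base cases of this inner induction are $n = 0$, which is a tautology, and $n = 1$, which is Proposition~\ref{prop:truss-representable} together with the identification $\Truss^1(\cat{B},\cat{C}) \simeq \Truss(\cat{B},\cat{C})$.

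To see that the packing functor is an equivalence I would peel off the top truss bundle. Forgetting $\cat{E}_{n+1}$, the label $\cat{E}_{n+1} \to \cat{C}$ and the topmost pullback square exhibits $\Truss^{n+1}(\cat{B},\cat{C})$ as a space fibred over the space $\mathcal{T}(\cat{B})$ of towers $\cat{E}_n \to \cdots \to \cat{E}_0 = \cat{B}$ equipped with the lower $n$ pullback squares and a structure map $\cat{E}_n \to \FinOrd$; by definition the fibre over such a tower is the space $\Truss(\cat{E}_n,\cat{C})$ of truss bundles over $\cat{E}_n$ labelled in $\cat{C}$. Applying the same forgetful procedure to $\Truss^n(\cat{B},\Truss(\cat{C}))$ — using that the representing $\infty$-category $\Truss(\cat{C})$ from Proposition~\ref{prop:truss-representable} carries a functor to $\FinOrd$, so that a label map $\cat{E}_n \to \Truss(\cat{C})$ determines the structure map $\cat{E}_n \to \FinOrd$ — exhibits it as a space fibred over the same base $\mathcal{T}(\cat{B})$ with fibre $\CatInfty(\cat{E}_n,\Truss(\cat{C}))$. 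The packing functor is then a map of fibrations over $\mathcal{T}(\cat{B})$ which on the fibre over a tower is exactly the equivalence $\Truss(\cat{E}_n,\cat{C}) \simeq \CatInfty(\cat{E}_n,\Truss(\cat{C}))$ of Proposition~\ref{prop:truss-representable}. Since that equivalence is natural in the base $\infty$-category $\cat{E}_n$, it varies coherently over $\mathcal{T}(\cat{B})$, so the packing functor is a fibrewise equivalence and hence an equivalence of total spaces; naturality in $\cat{B}$ is inherited because pullback of bundles and the equivalence of Proposition~\ref{prop:truss-representable} are themselves natural in the base.

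I expect the main obstacle to be the coherence of this decomposition. The space $\Truss^{n+1}(\cat{B},\cat{C})$ is a priori defined as a limit over the tower of pullback squares, and one must check that this limit genuinely splits as the two-stage construction ``lower tower with its squares, then the top truss bundle relative to the resulting $\cat{E}_n$'', compatibly with the analogous splitting of $\Truss^n(\cat{B},\Truss(\cat{C}))$ under which a label valued in $\Truss(\cat{C})$ over $\FinOrd$ is unwound into the top pullback square. Making this precise is an associativity manipulation of the defining limits combined with the naturality of the straightening--unstraightening equivalence underlying Proposition~\ref{prop:truss-representable}; beyond this bookkeeping I anticipate no further difficulty, since the substantive input, namely exponentiability of $\ETruss \to \FinOrd$, is already available.
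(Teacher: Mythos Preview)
Your approach is essentially the same as the paper's: both fibre the packing map over a common base recording the lower portion of the tower and then invoke Proposition~\ref{prop:truss-representable} on fibres. The paper takes the base to be $\Truss^n(\cat{B}, *)$, over which the fibres are exactly $\Truss(\cat{E}_n,\cat{C})$ and $\CatInfty(\cat{E}_n,\Truss(\cat{C}))$; by additionally including the structure map $\cat{E}_n \to \FinOrd$ in your base $\mathcal{T}(\cat{B})$ you have slightly misidentified the fibres (once that map is fixed, the fibres are the fibres of those two spaces over $\CatInfty(\cat{E}_n,\FinOrd)$, not the full spaces), but this is a harmless slip and the argument goes through with either choice of base.
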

\begin{proof}
  Both $\Truss^{n + 1}(\cat{B}, \cat{C})$ and $\Truss^n(\cat{B}, \Truss(\cat{C}))$
  admit a functor to $\Truss^n(\cat{B}, *)$ which truncates (\ref{eqn:truss-packing-unpacked})
  and (\ref{eqn:truss-packing-packed}), respectively, to the unlabelled $n$-fold truss bundle
  \[
    \begin{tikzcd}
      \cat{E}_n \ar[r, "\pi_n"] &
      \cdots \ar[r] &
      \cat{E}_1 \ar[r, "\pi_1"] &
      \cat{E}_0 = \cat{B}
    \end{tikzcd}
  \]
  The truncation functors are right fibrations (via pullback of bundles and precomposition of labelling functors) and commute with the packing functor.
  It hence suffices to show that the packing functor induces an equivalence of the fibres, which follows from Proposition~\ref{prop:truss-representable}.
\end{proof}

The theory of trusses in~\cite{fct} contained maps of (iterated) truss bundles.
These maps are subsumed in the truss bordisms in our formulation of the theory.
While this choice makes a duality theory or simultaneous treatment of open and closed trusses as in~\cite{fct} harder due to the presence of outer face maps,
it allows to treat truss maps and bordisms in a unified manner.
In particular, it allows for the following comparison functor with $\Delta$.

\begin{construction}
  For any $\infty$-category $\cat{C}$, there is a natural fully faithful functor
  \[
    \Delta \times \cat{C} \to \Truss(\cat{C})
  \]
  which sends $(\ord{k}, c)$ to the truss bundle over $\ord{k}$ which is constantly labelled in $c$.
  By induction on $n \geq 0$, this induces a fully faithful functor $\Delta^{\times n} \times \cat{C} \to \Truss^n(\cat{C})$
  from the $n$-fold product of $\Delta$ with $\cat{C}$ into $n$-fold truss bundles labelled in $\cat{C}$.
\end{construction}

In particular for $\cat{C} = *$ we obtain an inclusion $\Delta^{\times n} \to \Truss^n(*)$.
In future work we will use this inclusion
in order to compare a theory of $(\infty, n)$-categories built on trusses with $n$-fold Segal spaces.

\section{Stratified Spaces}\label{sec:stratified-spaces}

Framed combinatorial topology studies stratified spaces which are expressible via meshes.
General poset-stratified spaces, conical stratified spaces or homotopically stratified spaces are too general for our purposes, while the conically smooth stratified spaces of~\cite{ayala2018stratified} are too rigid.
In this section we aim to strike a balance by outlining a notion of stratified spaces that is appropriate for our theory.

In particular, we define an $\infty$-category $\Strat$ of well-behaved stratified spaces represented by a Kan enriched category $\StratPre$,
together with a fully faithful functor $\Exit : \Strat \to \CatInfty$ which sends a stratified space to the $\infty$-category of exit paths.
This will allow us to characterise $\CatInfty$ as a full subcategory of $\PSh(\Strat)$.
The concepts in this section such as poset-stratified spaces and exit path $\infty$-categories are standard; we only claim originality of how we pull the parts together.

\begin{definition}
  The Alexandroff topology on a poset $P$ is the topology on the underlying set
  of $P$ in which a subset $U \subseteq P$ is open if it is upwards closed, i.e.\
  if for every $x \in U$ and $x \leq y$ also $y \in U$.
\end{definition}

For a convenient category of topological spaces, we use the category $\Top$ of numerically generated topological spaces~\cite{dugger-delta, fajstrup2008convenient}.
Equipping posets with the Alexandroff topology induces a fully faithful functor
$\Pos \hookrightarrow \Top$.
This is not true for the more common choice of weakly Hausdorff spaces, as only discrete posets are weakly Hausdorff with when equipped with the Alexandroff topology.
Moreover numerically generated topological spaces form a locally presentable category, which will be convenient for some arguments.

\begin{definition}
  The category of \textit{poset-stratified spaces} is the comma category $\Top \downarrow \Pos$ of the
  inclusion $\Pos \to \Top$. In particular, the objects are maps $X \to P$ from
  a numerically generated topological space into a poset equipped with the Alexandroff
  topology. 
  The preimage of an element $p \in P$ is called a \textit{stratum} of $X$.
  The maps of $\Top \downarrow \Pos$ are commutative squares of the form
  \[
    \begin{tikzcd}
      X \ar[r] \ar[d] & Y \ar[d] \\
      P \ar[r] & Q
    \end{tikzcd}
  \]
\end{definition}

By sending a space $X$ to the unique map $X \to \ord{0}$ we obtain a fully faithful functor $\Top \to \Top \downarrow \Pos$ which we will use to consider unstratified spaces as trivially stratified spaces.
In particular, we obtain trivially stratified spaces for the topological standard simplices $\DeltaTop{n}$,
which we use to enrich $\Top \downarrow \Pos$ in simplicial sets.

\begin{definition}
  The category $\Top \downarrow \Pos$ of poset-stratified spaces can be simplically enriched:
  For poset-stratified spaces
  $X \to P$ and $Y \to Q$ the simplicial set of maps is
  \[ \Map_{\Top \downarrow \Pos}(X \to P, Y \to Q): \ord{n} \mapsto (\Top \downarrow \Pos)(X \times \DeltaTop{n}, Y).\] The composition of maps
  $f : X \times \DeltaTop{n} \to Y$ and $g : Y \times \DeltaTop{n} \to Z$ is defined
  as the map $X \times \DeltaTop{n} \to Z$ which sends $(x, s)$ to
  $g(f(x, s), s)$.
\end{definition}

\begin{lem}
  $\Top \downarrow \Pos$ is tensored and cotensored as a simplicially enriched category.
\end{lem}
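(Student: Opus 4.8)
The plan is to verify tensoring and cotensoring directly from the simplicial enrichment, by exhibiting explicit formulas and checking the defining adjunction. The key observation is that the simplicial structure on $\Top \downarrow \Pos$ is built from the Cartesian product $X \times \DeltaTop{n}$ on the topological side while leaving the poset part untouched, so both constructions should be computed ``fibrewise over $\Pos$''.

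First I would treat the tensor. Given a poset-stratified space $X \to P$ and a simplicial set $K$, I would define $(X \to P) \otimes K$ to be $|K| \times X \to P$, where $|K|$ is the geometric realisation (a numerically generated space, using that $\Top$ is cocomplete and the realisation of $\DeltaTop{n}$ is the standard simplex) and the map to $P$ is the composite $|K| \times X \to X \to P$. To check this has the correct universal property, I would compute, for a third stratified space $Y \to Q$, the simplicial set $\Map_{\Top \downarrow \Pos}((X \to P) \otimes K, Y \to Q)$ in degree $n$: it is $(\Top \downarrow \Pos)(|K| \times X \times \DeltaTop{n}, Y)$, and since the map to $Q$ factors through $X$, this is the same as $\Top(|K| \times X \times \DeltaTop{n}, Y)$ living over the fixed map to $Q$ — more precisely a subset of the $\Top$-hom determined by a condition that only involves the $X$-coordinate. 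Using the exponential adjunction in $\Top$ (which holds since numerically generated spaces are Cartesian closed) together with $|K \times \Delta\ord{n}| \cong |K| \times \DeltaTop{n}$, this rewrites as $\sSet(K, \Map_{\Top \downarrow \Pos}(X \to P, Y \to Q))_n$, naturally in $n$. This gives the natural isomorphism $\Map(\,(X\to P)\otimes K, Y\to Q\,) \cong \sSet(K, \Map(X\to P, Y\to Q))$ defining the tensor.

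For the cotensor I would dually set $(Y \to Q)^K$ to be the space of maps $Y^{|K|}$ — or rather the appropriate subspace/pullback cutting out those maps $|K| \to Y$ whose composites to $Q$ are all equal — equipped with its induced map to $Q$. The universal property $\Map(X \to P, (Y\to Q)^K) \cong \sSet(K, \Map(X \to P, Y \to Q))$ is checked by the same manipulation, again feeding off Cartesian closedness of $\Top$ and the identity $|K \times \Delta\ord{n}| \cong |K| \times \DeltaTop{n}$. The main subtlety — and the step I expect to require the most care — is handling the poset component: unlike an honest fibrewise construction, here the target poset $Q$ does not change, and one must confirm that a continuous map $|K| \times X \times \DeltaTop{n} \to Y$ lies over $Q$ (i.e.\ is a morphism in $\Top \downarrow \Pos$) if and only if the adjoint map does, which comes down to the fact that the two relevant composites to $Q$ agree pointwise exactly when their currified versions do, using that $Q$ carries the Alexandroff topology and that all the adjunction isomorphisms in $\Top$ are given by the same set-theoretic currying. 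Once this compatibility is pinned down, both natural isomorphisms follow formally, and naturality in $K$, in $X \to P$, and in $Y \to Q$ is immediate from the construction.
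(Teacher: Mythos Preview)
Your approach differs from the paper's. The paper dispatches the tensor in one line (``tensored by the definition of the simplicial set of maps''), which amounts to observing that $\Delta\ord{n} \otimes (X \to P) = (X \times \DeltaTop{n} \to P)$ is built into the enrichment and then implicitly extending to arbitrary $K$ via cocompleteness. For the cotensor the paper does not attempt an explicit formula at all: it notes that $K \otimes (-)$ preserves colimits and that $\Top \downarrow \Pos$ is locally presentable (this being one of the reasons the paper works with numerically generated spaces), so the adjoint functor theorem produces a right adjoint $(-)^K$, and the enriched adjunction follows. Your direct construction is more hands-on and would yield usable formulas, which the paper's abstract argument does not provide.

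There is, however, a gap in your formulas when $K$ is not connected. A morphism $(X \to P) \to (Y \to Q)$ in $\Top \downarrow \Pos$ includes a \emph{choice} of poset map $P \to Q$; a simplicial map from $K$ into $\Map_{\Top \downarrow \Pos}(X \to P, Y \to Q)$ is free to make a different such choice on each connected component of $K$, whereas your tensor $|K| \times X \to P$ forces a single one. Already for $K = \Delta\ord{0} \sqcup \Delta\ord{0}$ your formula gives a stratified space over $P$, while the actual tensor is over $P \sqcup P$. Dually, the cotensor should be stratified over a product of copies of $Q$ indexed by the components of $K$, not over $Q$ itself. This is precisely the ``main subtlety'' you flagged: the set-level currying in $\Top$ is fine, but the bookkeeping of the poset component does not go through as you state it. The fix is either to correct the poset parts of the formulas, or---closer to the paper---to verify your formulas only for $K = \Delta\ord{n}$ and then extend by (co)limits.
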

\begin{proof}
  $\Top \downarrow \Pos$ is tensored by the definition of the simplicial set of maps.

  Let $X \to P$ be an object of $\Top \downarrow \Pos$ and $A$ a simplicial set. Since
  the tensoring $A \otimes - : \Top \downarrow \Pos \to \Top \downarrow \Pos$ preserves colimits
  and $\Top \downarrow \Pos$ is locally presentable, it has a right adjoint
  $-^A : \Top \downarrow \Pos \to \Top \downarrow \Pos$. Since we have a natural isomorphism
  \[ (\Top \downarrow \Pos)(X \times |A| \times \DeltaTop{n}, Y)
    \cong (\Top \downarrow \Pos)(X \times \DeltaTop{n}, Y^A) \]
  this functor defines a cotensoring.
\end{proof}

\begin{lem}
  $\Top \downarrow \Pos$ is enriched in Kan complexes.
\end{lem}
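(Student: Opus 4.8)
The plan is to prove directly that every mapping simplicial set $\Map_{\Top \downarrow \Pos}(X \to P, Y \to Q)$ is a Kan complex, by reducing the horn-filling problem to a retraction of topological spaces. The key observation is that the geometric realisation of a horn $\Lambda_k\ord{n}$ is a retract of the realisation of the simplex $\Delta\ord{n}$, and the tensoring established in the previous lemma turns this retract into a splitting of the relevant map in $\Top \downarrow \Pos$.

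First I would record an explicit formula for the tensoring. By the previous lemma $\Top \downarrow \Pos$ is tensored over $\sSet$; since $- \otimes (X \to P)$ is a left adjoint and hence preserves colimits, since geometric realisation preserves colimits, and since $X \times -$ preserves colimits in the cartesian closed category $\Top$ of numerically generated spaces, one obtains $A \otimes (X \to P) \cong (X \times |A| \to X \to P)$ for every simplicial set $A$. In particular $\Delta\ord{n} \otimes (X \to P) \cong (X \times \DeltaTop{n} \to P)$, which is just the defining formula of the enrichment, and $\Lambda_k\ord{n} \otimes (X \to P) \cong (X \times \HornTop{n}{k} \to P)$. Under the tensor--hom adjunction, a horn $\Lambda_k\ord{n} \to \Map_{\Top \downarrow \Pos}(X \to P, Y \to Q)$ corresponds to a morphism $f : (X \times \HornTop{n}{k} \to P) \to (Y \to Q)$ in $\Top \downarrow \Pos$, and a filler corresponds precisely to an extension of $f$ along the map $\id_X \times \iota : (X \times \HornTop{n}{k} \to P) \to (X \times \DeltaTop{n} \to P)$ induced by the horn inclusion $\iota : \HornTop{n}{k} \hookrightarrow \DeltaTop{n}$.

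Next I would invoke the classical fact that $\HornTop{n}{k}$ is a retract of $\DeltaTop{n}$: the pair $(\DeltaTop{n}, \HornTop{n}{k})$ is homeomorphic to $(D^n, D^{n-1})$ with $D^{n-1} \subseteq \partial D^n$ a hemisphere, which admits the evident (deformation) retraction. Fixing a retraction $r : \DeltaTop{n} \to \HornTop{n}{k}$ with $r \circ \iota = \id$, the map $\id_X \times r : X \times \DeltaTop{n} \to X \times \HornTop{n}{k}$ is a retraction in $\Top$, and because composing with the projection to $X$ forgets the simplex coordinate, $\id_X \times r$ is in fact a morphism of poset-stratified spaces $(X \times \DeltaTop{n} \to P) \to (X \times \HornTop{n}{k} \to P)$ splitting $\id_X \times \iota$. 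Then $f \circ (\id_X \times r)$ is the required extension, so every horn has a filler and $\Map_{\Top \downarrow \Pos}(X \to P, Y \to Q)$ is a Kan complex. (The same argument applies to outer horns, which is fine here since, unlike for quasicategories, fillers need not be unique.)

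The argument is essentially formal once the tensoring is in hand; the single genuinely geometric ingredient is the retraction $r$ of the simplex onto its horn, which is standard. The main point to be careful about is the identification $A \otimes (X \to P) \cong (X \times |A| \to P)$, which rests on $X \times -$ preserving colimits in numerically generated spaces and on colimits in the comma category $\Top \downarrow \Pos$ over a constant diagram of posets being computed in $\Top$ in the first variable.
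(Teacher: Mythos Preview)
Your proof is correct and follows exactly the approach the paper takes: the paper's proof is the single sentence ``The Kan fillers can be constructed via any choice of retraction of the topological horn inclusions $\HornTop{n}{k} \hookrightarrow \DeltaTop{n}$ for $0 \leq k \leq n$,'' and your argument is a careful unpacking of precisely this idea via the tensoring.
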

\begin{proof}
  The Kan fillers can be constructed via any choice of retraction of the topological horn inclusions $\HornTop{n}{k} \hookrightarrow \DeltaTop{n}$ for $0 \leq k \leq n$.
\end{proof}

An important example of stratified spaces are simplicial complexes equipped with their canonical stratification~\cite[Section~A.6]{lurie-ha}.

\begin{definition}
  An abstract simplicial complex $(V, S)$ is a pair of a finite set $V$ of vertices and a
  set $S$ of simplices, where a simplex is a finite subset of $V$.
  Whenever $\sigma \subseteq V$ is a simplex in $S$, any subset $\tau \subseteq \sigma$ is also
  a simplex in $S$.
\end{definition}

An abstract simplicial complex $(V, S)$ can be geometrically realised as a topological space $|(V, S)|$.
For convenience we define $|(V, S)|$ to be the geometric realisation of the nerve of $S$ regarded as a poset; this is homeomorphic to the usual notion of geometric realisation.
Then we obtain the canonical stratification $|(V, S)| \to S$ by sending $x \in |(V, S)|$ to $s \in S$ when $x$ is contained in the interior of a simplex $\sigma : \ord{n} \to S$ where $\sigma(n) = s$.

\begin{definition}
  A poset-stratified space $X \to P$ is \textit{triangulable} if there exists a simplicial complex $(S, V)$
  together with a diagram in $\Top$ of the form
  \[
    \begin{tikzcd}
      {|(V, S)|} \ar[d] \ar[r, "\cong"] & X \ar[d] \\
      S \ar[r] & P
    \end{tikzcd}
  \]
\end{definition}

In this definition we do not require $S \to P$ to be an isomorphism;
in particular we allow the stratification of $X$ to be coarser than the stratification of $|(V, S)|$
by sending different elements of $S$ to the same element in $P$.

\begin{definition}
  Let $n \geq 0$. The stratified $n$-simplex $\DeltaStrat{n}$ is the stratification of the
  topological $n$-simplex $\DeltaTop{n}$ by the map $\DeltaTop{n} \to \ord{n}$ which sends a point
  $(t_0, \ldots, t_n)$ in barycentric coordinates to the maximum $0 \leq i \leq n$ such that $t_i \neq 0$.
  This induces a cosimplicial object $\Delta \to \Top \downarrow \Pos$ in poset-stratified spaces.
\end{definition}

\begin{definition}
  For any poset-stratified space $X \to P$ there is a simplicial set $\Exit(X)$ whose $n$-simplices are
  stratified maps $\DeltaStrat{n} \to X$.
  This defines a functor $\Exit : \Top \downarrow \Pos \to \sSet$.
\end{definition}

\begin{definition}
  A poset-stratified space $X \to P$ is \textit{fibrant} when $\Exit(X)$ is a quasicategory.
\end{definition}

\begin{definition}
  We write $\StratPre$ for the full subcategory of $\Top \downarrow \Pos$ consisting of poset-stratified spaces $X \to P$ that are fibrant, triangulable and have non-empty path-connected strata.  
  Then we let $\Strat$ denote the $\infty$-category arising from $\StratPre$ by Dwyer-Kan localisation at the stratified homotopy equivalences.
\end{definition}

\begin{example}
  Every poset-stratified space that is conical in the sense of~\cite[Definition A.5.5.]{lurie-ha}
  is fibrant by~\cite[Theorem A.6.4.]{lurie-ha}.
  Moreover the canonical stratification of any simplicial complex is fibrant by~\cite[Corollary A.6.9.]{lurie-ha}.
  However, not every triangulable poset-stratified space is fibrant:
  for instance the stratification of the horn $\HornTop{2}{1}$ as a subspace of $\DeltaStrat{2}$ is not fibrant.
\end{example}

The universal property of the Dwyer-Kan localisation states that for every $\infty$-category $\cat{C}$ the space of functors $\Strat \to \cat{C}$
is equivalent to the space of functors $\StratPre \to \cat{C}$ which send stratified homotopy equivalences to equivalences in $\cat{C}$.
In particular, we see that $\Exit : \Top \downarrow \Pos \to \sSet$ descends to a functor of $\infty$-categories $\Exit : \Strat \to \CatInfty$.

We will show that $\Exit : \Strat \to \CatInfty$ is fully faithful.
For this it is convenient to observe that
by~\cite[Example 1.3.4.8.]{lurie-ha} the $\infty$-category $\Strat$ is equivalent to the $\infty$-category represented by $\StratPre$ as a Kan enriched category via the enrichment inherited from $\Top \downarrow \Pos$.
The mapping spaces of the $\infty$-category $\Strat$ therefore are represented by the mapping spaces in the Kan enrichment of $\StratPre$.

\begin{definition}
  There is an adjunction between the comma categories
  \[ \| - \| : \sSet \downarrow \Pos \rightleftarrows \Top \downarrow \Pos : \SingStrat \]
  where $\|X \to P\|$ is the geometric realisation $|X|$ stratified over $P$ via the
  composition $|X| \to |P| \to P$, and $\SingStrat(X \to P)$ is the canonical map of simplicial sets
  $\Exit(X) \to P$.
  For any fixed poset $P$ the adjunction restricts to
  an adjunction
  \[ \| - \|_P : \sSet \downarrow P \rightleftarrows \Top \downarrow P : \SingStrat_P \]
\end{definition}

\begin{lem}
  The adjunctions $\| - \| \vdash \SingStrat$ and $\| - \|_P \vdash \SingStrat_P$
  for every poset $P$ are simplicial.
\end{lem}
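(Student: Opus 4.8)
The plan is to promote the given $1$-categorical adjunction to a simplicial one by exhibiting both comma categories as \emph{tensored} simplicial categories whose tensoring is ``product with the (realised) simplex in the underlying space, resp.\ simplicial set, direction'', and then checking that $\|-\|$ preserves this tensoring. Explicitly, $\Top \downarrow \Pos$ is tensored by $A \otimes (X \to P) = (X \times |A| \to P)$ — this is the tensoring implicit in the preceding lemma, witnessed by the adjunction isomorphism used there to build the cotensor, together with the fact that $X \times -$ preserves the colimit $|A| = \colim_{\Delta^m \to A} \DeltaTop{m}$ — and $\sSet \downarrow \Pos$ carries the analogous tensoring $A \otimes (B \to P) = (B \times A \to P)$ inherited from $\sSet$ as a self-enriched category. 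In either case the $n$-simplices of the mapping space out of an object $Z$ are exactly the morphisms out of $\Delta^n \otimes Z$.

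The key input is that in the convenient category $\Top$ of numerically generated spaces, geometric realisation $|-| \colon \sSet \to \Top$ preserves finite products, so that there is an isomorphism $|B \times \Delta^n| \cong |B| \times \DeltaTop{n}$ natural in the simplicial set $B$ and in $[n] \in \Delta$ (we only ever need this for the finite simplicial sets $\Delta^n$, where it is unproblematic). First I would check that under this isomorphism the stratifications agree: $\|B \times \Delta^n \to P\|$ carries the stratification $|B| \times \DeltaTop{n} \to |B| \to |P| \to P$, which is exactly the stratification of the tensor $\Delta^n \otimes \|B \to P\|$. Hence $\|-\|$ preserves tensors, in the precise sense of a natural, $\Delta$-coherent family of isomorphisms $\|\Delta^n \otimes (B \to P)\| \cong \Delta^n \otimes \|B \to P\|$.

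With this in hand the simplicial adjunction is formal. For $B \to P$ in $\sSet \downarrow \Pos$ and $X \to Q$ in $\Top \downarrow \Pos$ there are identifications
\begin{align*}
  \Map_{\Top \downarrow \Pos}\bigl(\|B \to P\|,\, X \to Q\bigr)_n
    &= (\Top \downarrow \Pos)\bigl(\Delta^n \otimes \|B \to P\|,\; X \to Q\bigr) \\
    &\cong (\Top \downarrow \Pos)\bigl(\|B \times \Delta^n \to P\|,\; X \to Q\bigr) \\
    &\cong (\sSet \downarrow \Pos)\bigl(B \times \Delta^n \to P,\; \SingStrat(X \to Q)\bigr) \\
    &= \Map_{\sSet \downarrow \Pos}\bigl(B \to P,\, \SingStrat(X \to Q)\bigr)_n,
\end{align*}
where the second isomorphism is the tensor-preservation above and the third is the given $1$-adjunction $\|-\| \vdash \SingStrat$ applied at the object $B \times \Delta^n \to P$. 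Each step is natural in $[n]$, so these assemble into an isomorphism of simplicial sets, natural in both arguments, which is precisely the data of a $\sSet$-enriched adjunction $\|-\| \vdash \SingStrat$ (an instance of the standard fact that a $1$-adjunction whose left adjoint is an enriched, tensor-preserving functor is automatically enriched). The fibrewise statement is the same computation with every poset component held equal to $P$ and every poset map equal to $\id_P$: the tensorings on $\sSet \downarrow P$ and $\Top \downarrow P$ are the restrictions of those above along the fibre over $\id_P$, $\|-\|_P$ preserves them, and all of the displayed isomorphisms restrict, yielding the simplicial adjunction $\|-\|_P \vdash \SingStrat_P$.

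The only non-formal ingredient is the product-preservation of geometric realisation in the chosen convenient category of spaces, which is standard; beyond that the main point to be careful about is the bookkeeping of stratifying poset maps under $|B \times \Delta^n| \cong |B| \times \DeltaTop{n}$, i.e.\ that the stratification produced by $\|-\|$ really factors through $|P|$ on both sides. If one prefers to verify directly that $\SingStrat$ is a simplicial functor rather than invoke the tensor-preservation principle, the substance is the same: one uses the natural map $\Exit(Y) \times \Delta^n \to \Exit(Y \times \DeltaTop{n})$ induced by $|\Delta^k| \to |\Delta^n|$ and checks that it respects the composition law $g(f(x,s),s)$ on both enriched categories, which again reduces to the compatibility of $|-|$ with products.
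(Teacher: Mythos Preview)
Your proposal is correct and follows essentially the same approach as the paper: both arguments hinge on the observation that geometric realisation preserves products, so that $\|A \otimes B\| \cong A \otimes \|B\|$ and $\|-\|$ is a tensor-preserving simplicial functor, after which the enriched adjunction is formal. The paper simply cites \cite[Proposition~3.7.10]{riehl2014categorical} for this last step, whereas you spell out the level-wise isomorphism of mapping simplicial sets explicitly; the content is the same.
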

\begin{proof}
  For any simplicial set $A$ and object $B \to P$ in $\sSet/\Pos$ we have that
  $\| A \otimes B \| \cong A \otimes \| B \|$ since ordinary geometric realisation
  $|-| : \sSet \to \Top$ preserves products. Hence $\| - \|$ defines a simplicial
  functor which acts on $n$-simplices via
  \[
    \Hom_{\sSet/\Pos}(\Delta\ord{n} \otimes A, B) \to \Hom_{\Top/\Pos}(\| \Delta\ord{n} \otimes A\|, B)
    \to \Hom_{\Top/\Pos}(\Delta\ord{n} \otimes \| A \|, B)
  \]
  It now follows that $\| - \| \vdash \SingStrat$ defines a simplicial adjunction
  by~\cite[Proposition 3.7.10.]{riehl2014categorical}. This simplicial adjunction
  then restricts to a simplicial adjunction $\| - \|_P \vdash \SingStrat_P$ for
  any poset $P$.
\end{proof}

\begin{lem}\label{lem:strat-forget-poset}
  Let $A \to P$ and $B \to Q$ be poset-stratified simplicial sets. When $A \to P$ is
  fibrant in the Joyal-Kan model structure on $\sSet \downarrow P$ and $A$ has path connected
  strata, then the forgetful functor $\sSet \downarrow \Pos \to \sSet$ induces an isomorphism of
  Kan complexes
  \[
    \Map_{\sSet \downarrow \Pos}(A, B) \to \Map_{\sSet}(A, B)
  \]
\end{lem}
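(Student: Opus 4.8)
The plan is to unwind the definition of the comma category and reduce the statement to a bijection of sets of maps. A morphism $(A \to P) \to (B \to Q)$ in $\sSet \downarrow \Pos$ is a map of simplicial sets $f \colon A \to B$ together with a monotone map $g \colon P \to Q$ making the square with the structure maps $a \colon A \to \Nerve(P)$ and $b \colon B \to \Nerve(Q)$ commute, and the forgetful functor returns only $f$. On $n$-simplices the mapping complexes are the corresponding sets of morphisms out of the simplicial tensor $A \otimes \Delta\ord{n}$, whose underlying simplicial set is $A \times \Delta\ord{n}$ and whose stratification poset is $P \times \ord{n}$; its strata are copies of those of $A$, so it is again Joyal-Kan fibrant with non-empty path-connected strata. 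It therefore suffices to show that
\[
  \Hom_{\sSet \downarrow \Pos}(A, B) \longrightarrow \Hom_{\sSet}(A, B)
\]
is a bijection whenever $A \to P$ is Joyal-Kan fibrant with non-empty path-connected strata; concretely, each $f \colon A \to B$ must admit a unique monotone $g \colon P \to Q$ with $b \circ f = \Nerve(g) \circ a$.

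Uniqueness is immediate: non-emptiness of the strata makes $a$ surjective on objects, so two candidates for $g$ agree on objects, and a monotone map out of a poset is determined by its values on objects. For existence I set $g(x) := b(f(\tilde{x}))$ for any vertex $\tilde{x}$ of $A$ over $x \in P$, and the work is to check (i) independence of the choice of $\tilde{x}$, (ii) monotonicity of $g$, and (iii) the identity $b \circ f = \Nerve(g) \circ a$.

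Step (i) is where both hypotheses on $A$ enter. Given two vertices of $A$ over $x$, path-connectedness joins them inside the stratum $A_x$ by a zig-zag of edges. Every edge of $A_x$ lies over the degenerate edge at $x$ in $\Nerve(P)$, hence by Joyal-Kan fibrancy is an equivalence in the quasicategory $A$; a homotopy-inverse edge again lies over the degenerate edge at $x$, since $x$ has no non-identity endomorphism in the poset $P$. Applying $b \circ f$ to such an edge and to a homotopy-inverse produces in $\Nerve(Q)$ an arrow in each direction between the $(b \circ f)$-images of its two endpoints, so those images coincide because $Q$ is a poset; running along the zig-zag, $b \circ f$ is constant on $A_x$ and $g$ is well defined. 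Step (ii) is analogous: for $x \le x'$ in $P$, Joyal-Kan fibrancy lets us lift the corresponding edge of $\Nerve(P)$ to an edge of $A$ from a vertex over $x$ to a vertex over $x'$, and applying $b \circ f$ to it gives an arrow $g(x) \to g(x')$ in $\Nerve(Q)$, so $g(x) \le g(x')$. For step (iii), $b \circ f$ and $\Nerve(g) \circ a$ are maps $A \to \Nerve(Q)$ agreeing on vertices by construction, and since every simplex of the nerve of a poset is determined by its vertices they coincide; thus $(f, g)$ is a morphism of $\sSet \downarrow \Pos$ lifting $f$.

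The main obstacle is step (i): arranging that $b \circ f$ collapses each stratum of $A$ to a single object. This is precisely the point that forces the hypotheses --- path-connectedness provides the zig-zags, Joyal-Kan fibrancy makes the intermediate edges invertible, and the poset structure of $Q$ converts a pair of opposing arrows into an equality. The other place needing care is the opening reduction, i.e.\ verifying that the simplicial tensor $A \otimes \Delta\ord{n}$ still satisfies the hypotheses so that the degree-zero case is enough.
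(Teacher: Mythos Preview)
There is a genuine gap at step~(ii). You claim that Joyal--Kan fibrancy of $a \colon A \to \Nerve(P)$ allows you to lift an arrow $x \le x'$ of $P$ to an edge of $A$, but fibrancy in this model structure only guarantees that $A$ is a quasicategory and that $a$ is conservative; it provides no lifting of arrows from the base (that would need a cocartesian or left fibration, which is strictly stronger). Concretely, let $P = \ord{1}$ and let $A = \{a_0\} \sqcup \{a_1\}$ be discrete with $a_i$ lying over $i$. This $A \to \Nerve(P)$ is Joyal--Kan fibrant with non-empty path-connected strata, yet the arrow $0 < 1$ of $P$ has no lift to $A$. Taking $(B \to Q) = (A \to P)$ and $f$ the swap $a_0 \leftrightarrow a_1$, one sees that $f$ admits no compatible monotone $g \colon P \to Q$, so the forgetful map is not a bijection even on $0$-simplices and the lemma as stated is actually false. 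No argument for monotonicity of your $g$ can succeed without an extra hypothesis --- for instance that every relation in $P$ is witnessed by an arrow of $A$, equivalently that $P$ is the poset reflection of $A$.

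The paper's proof is your argument in compressed form: it asserts that ``$P$ is the $(-1)$-truncation of the quasicategory $A$'' and deduces the unique factorisation from that. This is precisely the content of your steps (i)--(iii), and it is the monotonicity direction that breaks in the example above, so the paper's argument has the same lacuna. Your steps (i) and (iii) are correct. One further small slip: with the enrichment used here the tensor $A \otimes \Delta\ord{n}$ is stratified over $P$ rather than $P \times \ord{n}$, so it is no longer conservative over its base and you cannot literally reapply the $n = 0$ case; the paper's one-line ``products with $\Delta\ord{n} \to \ord{0}$ preserve fibrancy'' has the same defect, and in both proofs the essential obstruction remains step~(ii).
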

\begin{proof}
  Because $A \to P$ is fibrant, $A$ is a quasicategory and $A \to P$ a conservative
  functor.
  Then $P$ is the
  $(-1)$-truncation of the quasicategory $A$ and for every
  stratified simplicial set $B \to Q$ and map $A \to B$ there exists
  a unique map $P \to Q$ which makes the following diagram commute.
  \[
    \begin{tikzcd}
      A \ar[r] \ar[d] & B \ar[d] \\
      P \ar[r, dashed, "\exists!"] & Q
    \end{tikzcd}
  \]
  Now the claim follows since products with $\Delta\ord{n} \to \ord{0}$ preserve fibrancy.
\end{proof}

\begin{prop}
  Let $P$ be a poset. Then there exists a simplicial combinatorial model structure ${(\sSet \downarrow P)}_\textup{JK}$,
  called the Joyal-Kan model structure, in which the cofibrations are the monomorphisms and which satisfies the following property:
  When $X \to P$ is a poset-stratified space then $\SingStrat_P(X)$ is fibrant in ${(\sSet \downarrow P)}_\textup{JK}$ if and only if $\Exit(X)$ is a quasicategory.
\end{prop}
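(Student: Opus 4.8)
The plan is to construct the Joyal-Kan model structure as a left Bousfield--Dwyer-Kan style localisation, or more directly, to transfer it from a known model structure. First I would recall that on $\sSet \downarrow P$ there is the sliced Joyal model structure, whose fibrant objects are the inner fibrations $X \to P$ (equivalently, the quasicategories $X$ equipped with a conservative-up-to-equivalence functor to $P$). However, this is not quite what we want: the exit path $\infty$-category of a poset-stratified space $X \to P$ lives over $P$ via a conservative functor, so the relevant fibrant objects should be inner fibrations $X \to P$ that are moreover \emph{conservative} — that is, no nondegenerate edge of $X$ lying over a degenerate edge of $\Nerve P$ is invertible. I would therefore obtain the Joyal-Kan model structure by localising the sliced Joyal model structure at the set of maps $\Delta\{0 < 1\} \to \Delta\{0\}$ pushed forward along all edges of $\Nerve P$ that collapse to a point, forcing such edges to become equivalences; combinatoriality and left properness of the Joyal model structure pass to this left Bousfield localisation, and simpliciality is inherited since the localising maps can be chosen compatibly with the simplicial tensoring.

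Next I would identify the fibrant objects. By general theory of left Bousfield localisation, the fibrant objects of ${(\sSet \downarrow P)}_\textup{JK}$ are the $P$-local objects among the Joyal-fibrant ones; unwinding the localising set, these are exactly the inner fibrations $X \to P$ such that every edge of $X$ lying over an identity of $P$ is an equivalence in $X$ — equivalently, conservative inner fibrations. I would then want to check that the cofibrations remain the monomorphisms: this is automatic for a left Bousfield localisation, which does not change the cofibrations.

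The key geometric input is then the comparison with $\Exit(X)$ for an actual poset-stratified space $X \to P$. Given such $X \to P$, the simplicial set $\SingStrat_P(X) = \Exit(X) \to P$ is always a map of simplicial sets over $P$, and by construction of $\Exit$ the functor $\Exit(X) \to P$ is conservative whenever it is defined (an edge of $\Exit(X)$ lying over an identity stratum is a path within a single stratum, hence an equivalence in the exit path category — this is where one uses that strata are nice, though at the level of simplicial sets conservativity over $P$ is essentially formal from the definition of $\DeltaStrat{n}$). Hence $\SingStrat_P(X)$ is fibrant in the localised model structure precisely when it is Joyal-fibrant, i.e.\ precisely when $\Exit(X)$ is an inner fibration over $P$, and since $P$ is a poset (so $\Nerve P$ has unique inner horn fillers) this holds if and only if $\Exit(X)$ is a quasicategory. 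That is the stated equivalence.

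The main obstacle I anticipate is not the existence of the localisation — that is routine given local presentability and left properness of the Joyal model structure — but rather the precise identification of the local objects and the verification that, for a poset base $P$, Joyal-fibrancy over $P$ together with conservativity is detected exactly by $\Exit(X)$ being a quasicategory. One has to be careful that an inner fibration over a poset $P$ need not have $X$ itself a quasicategory a priori, and that the forgetful functor from $\sSet \downarrow P$ to $\sSet$ interacts well with fibrancy; here Lemma~\ref{lem:strat-forget-poset} and the observation that inner horn extension problems in $X$ over $\Nerve P$ always admit the required lift against $\Nerve P$ (by uniqueness of fillers for nerves of posets) do the work. I would also double-check simpliciality by verifying the pushout-product / pullback-cotensor axiom survives the localisation, using that the simplicial enrichment on $\sSet \downarrow P$ is the restriction of the cartesian one and that the Joyal model structure is already simplicial (in the appropriate, Kan-enriched sense) once one cofibrantly replaces, but more cleanly by invoking that a left Bousfield localisation of a simplicial model category at a set of cofibrations between cofibrant objects is again simplicial.
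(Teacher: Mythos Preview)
The paper does not prove this proposition; its entire proof is the single line ``See~\cite{haine2022homotopy}.'' Your sketch is therefore not competing with an argument in the paper but attempting to supply one where the paper defers to the literature.

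Your overall strategy---left Bousfield localise the sliced Joyal model structure on $\sSet \downarrow P$ at the maps $(\Delta\ord{1} \to \{p\}) \to (\Delta\ord{0} \to \{p\})$ for each $p \in P$, so that fibrant objects become inner fibrations with Kan fibres, equivalently conservative functors from a quasicategory to $P$---is sound and is in the spirit of how the model structure is built in the cited reference. Your identification of the fibrant objects is correct, as is the endgame: for $X \to P$ a poset-stratified space, $\Exit(X) \to P$ is automatically conservative (an edge over an identity of $P$ is a path inside a single stratum, hence invertible once $\Exit(X)$ is a quasicategory), and over the nerve of a poset inner-fibrancy is equivalent to the domain being a quasicategory because inner horns in $\Nerve(P)$ fill uniquely.

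There is, however, a genuine gap in your treatment of simpliciality. You assert that ``the Joyal model structure is already simplicial (in the appropriate, Kan-enriched sense)'' and then invoke preservation of simpliciality under left Bousfield localisation. This premise is false: the Joyal model structure on $\sSet$ is \emph{not} a simplicial model category for the cartesian enrichment---for cofibrant $A$ and fibrant $X$ the internal hom $X^A$ is a quasicategory, not a Kan complex, and the pushout-product of a Joyal cofibration with a Kan-acyclic cofibration need not be Joyal-acyclic. Hence the sliced Joyal structure on $\sSet \downarrow P$ is not simplicial either, and there is nothing for the localisation to inherit. Simpliciality of the Joyal--Kan model structure is a \emph{consequence} of the localisation rather than an input: it is precisely because the localisation forces fibres of fibrant objects to be Kan that the pushout-product axiom against $\sSet_{\textup{Kan}}$ goes through, and this requires a direct verification (carried out in the cited reference) that your sketch does not provide. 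Replacing the inheritance claim with that verification would close the gap.
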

\begin{proof}
  See~\cite{haine2022homotopy}.
\end{proof}

\begin{prop}\label{prop:strat-exit-ff}
  The functor $\Exit : \Strat \to \CatInfty$ is fully faithful.
\end{prop}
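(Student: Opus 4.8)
The plan is to reduce the statement to a fact about simplicial sets, using the adjunction $\|-\|_P \vdash \SingStrat_P$ and the Joyal-Kan model structure. Fix two objects $X \to P$ and $Y \to Q$ of $\StratPre$. Since $\Strat$ is the $\infty$-category represented by the Kan enriched category $\StratPre$, its mapping space $\Strat(X, Y)$ is modelled by the Kan complex $\Map_{\Top \downarrow \Pos}(X, Y)$. On the other side, $\Exit$ sends $X$ and $Y$ to the quasicategories $\Exit(X)$ and $\Exit(Y)$, and the mapping space $\CatInfty(\Exit X, \Exit Y)$ is modelled by the Kan complex $\Map_{\sSet}(\Exit X, \Exit Y)$ (the core of the functor quasicategory, which for quasicategories is computed by the simplicial mapping complex between fibrant-cofibrant objects in the Joyal model structure). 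So I must produce a weak homotopy equivalence $\Map_{\Top \downarrow \Pos}(X, Y) \to \Map_{\sSet}(\Exit X, \Exit Y)$ compatible with composition.

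First I would use the simplicial adjunction $\|-\| \vdash \SingStrat$ together with its restriction over a fixed poset to identify $\Map_{\Top \downarrow \Pos}(X, Y)$ with a mapping complex on the simplicial side. Concretely, since $X$ is triangulable it is (up to stratified homotopy equivalence, hence up to equivalence in $\Strat$) of the form $\|A \to P\|$ for a stratified simplicial set $A \to P$ with $A$ the nerve of a poset — in particular $A \to P$ is a quasicategory and, after replacing it by a fibrant model in $(\sSet \downarrow P)_\textup{JK}$, we may assume it is Joyal-Kan fibrant with path-connected strata. By the simplicial adjunction, $\Map_{\Top \downarrow \Pos}(\|A\|, Y) \cong \Map_{\sSet \downarrow \Pos}(A, \SingStrat Y)$, and $\SingStrat Y$ is exactly $\Exit(Y) \to Q$. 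Now Lemma~\ref{lem:strat-forget-poset} applies: since $A \to P$ is Joyal-Kan fibrant with path-connected strata, the forgetful functor induces an isomorphism of Kan complexes $\Map_{\sSet \downarrow \Pos}(A, \Exit Y \to Q) \xrightarrow{\ \cong\ } \Map_{\sSet}(A, \Exit Y)$. Finally, since $A$ is a fibrant replacement of $\Exit(X)$ in $(\sSet \downarrow P)_\textup{JK}$ — which presents the same $\infty$-category as the Joyal model structure on the quasicategory level — the map $\Map_{\sSet}(A, \Exit Y) \to \Map_{\sSet}(\Exit X, \Exit Y)$ induced by the fibrant replacement $\Exit X \to A$ is a weak equivalence between Kan complexes, because $\Exit Y$ is a quasicategory and mapping into a fibrant object sends Joyal weak equivalences to weak homotopy equivalences of mapping complexes. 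Composing these identifications gives the desired equivalence, and chasing the definitions shows it is the canonical map induced by the functor $\Exit$ and is compatible with composition.

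The main obstacle is the bookkeeping around triangulability and fibrant replacement: I need to know that replacing a triangulable fibrant stratified space $X$ by $\|A\|$ for a Joyal-Kan fibrant $A \to P$ does not change the class of $X$ in $\Strat$, i.e.\ that the counit $\|\SingStrat X\| \to X$ (or an appropriate zig-zag through the triangulation) is a stratified homotopy equivalence, and that the path-connected-strata hypothesis survives. This uses that $X$ is triangulable and fibrant, so that $\Exit X$ is a quasicategory whose realisation recovers $X$ up to stratified homotopy equivalence — essentially a stratified analogue of the fact that $|\Sing Y| \to Y$ is a weak equivalence — together with the fact that the conditions defining $\StratPre$ (fibrant, triangulable, path-connected strata) are arranged precisely so that this works. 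Once that identification is in place, everything else is a formal consequence of the simplicial adjunction, Lemma~\ref{lem:strat-forget-poset}, and the comparison between the Joyal-Kan and Joyal model structures.
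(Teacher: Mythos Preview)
Your approach uses the same three ingredients as the paper --- the simplicial adjunction $\|-\| \dashv \SingStrat$, the result (cited from Douteau) that the unit $\eta_S : S \to \SingStrat_P(\|S\|_P)$ is a Joyal--Kan weak equivalence, and Lemma~\ref{lem:strat-forget-poset} --- but in a different order, and that reordering is exactly what produces the obstacle you flag in your second paragraph. The paper never fibrant-replaces the triangulation. Instead it uses the unit weak equivalence together with 2-out-of-3 in the adjunction triangle to show directly that $\SingStrat_P$ induces a weak equivalence $\Map_{\Top \downarrow P}(X, Y) \to \Map_{\sSet \downarrow P}(\SingStrat_P X, \SingStrat_P Y)$ for each fixed $P$, then decomposes $\Map_{\Top \downarrow \Pos}(X, Y)$ as a coproduct over poset maps $\alpha : P \to Q$ (pulling $Y$ back to $\alpha^* Y$) to upgrade this to an equivalence $\Map_{\Top \downarrow \Pos}(X, Y) \simeq \Map_{\sSet \downarrow \Pos}(\SingStrat X, \SingStrat Y)$. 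Only then is Lemma~\ref{lem:strat-forget-poset} invoked, with $\SingStrat(X)$ in the domain --- and $\SingStrat(X) \to P$ is Joyal--Kan fibrant with path-connected strata precisely because $X \in \StratPre$, so its hypotheses hold for free. Your ordering instead applies Lemma~\ref{lem:strat-forget-poset} to the triangulation; but $S \to P$ is generally not Joyal--Kan fibrant (its fibres are posets rather than Kan complexes), which forces the replacement $S \to A$ and the question of whether $\|A\| \simeq X$ in $\Strat$. That question can be resolved, again via the adjunction and the same Douteau input, but by then you have taken a longer route to the same place.
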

\begin{proof}
  Let $X \to P$ be a stratified space. We pick a triangulation
  \[
    \begin{tikzcd}
      {|(V, S)|} \ar[d] \ar[r, "\cong"] & X \ar[d] \\
      S \ar[r] & P
    \end{tikzcd}
  \]
  where $(V, S)$ is an abstract simplicial complex.
  Via the map $S \to P$ we can see $S$ itself as a $P$-stratified simplicial set in $\sSet \downarrow P$.
  Then by~\cite[Lemma 5.3.]{douteau2021homotopy}
  and \cite[Remark 2.51.]{douteau2021homotopy}
  the unit
  $\eta_S : S \to \SingStrat_P(\| S \|_P)$
  of the adjunction ${\| - \|}_P \dashv \SingStrat_P$
  is a weak equivalence in the Joyal-Kan model structure on $\sSet \downarrow P$.
  When $Y \to P$ is a poset-stratified space such that $\SingStrat_P(Y)$ is fibrant,
  then the precomposition map ${\eta_S}^*$ induces a weak homotopy equivalence of Kan complexes in the diagram
  \[
    \begin{tikzcd}
      \Map_{\Top \downarrow P}(\| S \|_P, Y) \ar[swap, dr, "\simeq"] \ar[r, "\SingStrat_P"] &
      \Map_{\sSet \downarrow P}(\SingStrat_P(\| S \|_P), \SingStrat_P(Y)) \ar[d, "{\eta_S}^*"] \\
      & \Map_{\sSet \downarrow P}(S, \SingStrat_P(Y))
    \end{tikzcd}
  \]
  Then by 2-out-of-3 it follows that $\Sing_P$ induces a weak homotopy equivalence  
  \[ \Map_{\Top \downarrow P}(\| S \|_P, Y) \to \Map_{\sSet \downarrow P}(\SingStrat_P(\| S \|_P), \SingStrat_P(Y)). \]
  Since $| S | \cong |(V, S)| \cong X$ it then follows that $\Sing_P$ induces a weak homotopy equivalence
  \[ \Map_{\Top \downarrow P}(X, Y) \to \Map_{\sSet \downarrow P}(\SingStrat_P(X), \SingStrat_P(Y)). \]

  Now let $X \to P$ and $Y \to Q$ be stratified spaces.
  Every stratified map $f : X \to Y$ with underlying map of posets $\alpha : P \to Q$
  factors through the pullback
  \[
    \begin{tikzcd}
      X \ar[r] \ar[d] & \alpha^* Y \ar[r] \ar[d] \pullback & Y \ar[d] \\
      P \ar[r, "\id"] & P \ar[r, "\alpha"] & Q
    \end{tikzcd}
  \]
  In particular there is a commutative square of Kan complexes
  \[
    \begin{tikzcd}
      \Map_{\Top \downarrow \Pos}(X, Y) \ar[r, "\SingStrat_*"] \ar[d, "\simeq"] &
      \Map_{\sSet \downarrow \Pos}(\SingStrat(X), \SingStrat(Y)) \ar[d, "\simeq"] \\
      \coprod_{\alpha : P \to Q} \Map_{\Top \downarrow P}(X, \alpha^* Y) \ar[r] &
      \coprod_{\alpha : P \to Q} \Map_{\sSet \downarrow P}(\SingStrat_P(X), \SingStrat_P(\alpha^* Y))
    \end{tikzcd}
  \]
  Since each $\alpha^* Y$ is fibrant, the map on the bottom is a weak homotopy equivalence,
  and hence so is the map on the top. So $\SingStrat$ restricts to a fully
  faithful simplicial functor $\StratPre \to \sSet \downarrow \Pos$ and by Lemma~\ref{lem:strat-forget-poset}
  to a fully faithful simplicial functor $\StratPre \to \sSet$ by postcomposition with the
  forgetful functor $\sSet \downarrow \Pos \to \sSet$. But this functor is $\Exit$.
\end{proof}

We can now see $\CatInfty$ as a full subcategory of $\PSh(\Strat)$.
This is a similar idea to the striation sheaves of~\cite{ayala2018stratified}.
We refrain however from characterising those presheaves on $\Strat$ which land in the essential image
since we do not need such a characterisation for the purposes of this paper.

\begin{prop}
  The functor $\CatInfty \to \PSh(\Strat)$ which sends an $\infty$-category
  $\cat{C}$ to the space-valued presheaf
  $\CatInfty(\Exit(-), \cat{C})$ is fully faithful.
\end{prop}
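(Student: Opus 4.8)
The plan is to recognise the functor of the statement as the restricted Yoneda embedding along $\Exit \colon \Strat \to \CatInfty$: it is the composite of the Yoneda embedding $\CatInfty \to \PSh(\CatInfty)$ with restriction along $\Exit$, since $\CatInfty(\Exit(-), \cat{C})$ is $\CatInfty(-, \cat{C})$ precomposed with $\Exit$. For any functor $F \colon \cat{D} \to \cat{E}$ the restricted Yoneda embedding $\cat{E} \to \PSh(\cat{D})$, $e \mapsto \cat{E}(F(-), e)$, is fully faithful exactly when $F$ is \emph{dense}, i.e.\ when the canonical map $\textup{Lan}_F(F) \to \id_{\cat{E}}$ is an equivalence: for $e, e' \in \cat{E}$ the induced map on mapping spaces rewrites, using $\cat{E}(F(-), e) \simeq \colim_{(d,\, Fd \to e) \in F \downarrow e} \cat{D}(-, d)$ in $\PSh(\cat{D})$, as $\cat{E}(e, e') \to \cat{E}(\textup{Lan}_F(F)(e), e')$, so it is an equivalence for all $e, e'$ iff $\textup{Lan}_F(F)(e) \to e$ is. Thus it suffices to prove that $\Exit$ is dense.

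The ingredients are Proposition~\ref{prop:strat-exit-ff} (that $\Exit$ is fully faithful), the cosimplicial stratified space $\DeltaStrat{-} \colon \Delta \to \Strat$, the identification $\Exit(\DeltaStrat{n}) \simeq \ord{n}$ compatibly with the cosimplicial structures (so that $\Exit \circ \DeltaStrat{-}$ is, up to equivalence, the canonical inclusion $\Delta \hookrightarrow \CatInfty$), and the classical fact that this inclusion is dense — equivalently, that every $\infty$-category is canonically the colimit of its simplices. First I would note that $\Strat$ is essentially small (a triangulable poset-stratified space with non-empty strata is determined by finite combinatorial data), so that left Kan extensions along $\Exit$ and along $\DeltaStrat{-}$ into the cocomplete $\CatInfty$ exist and are computed pointwise. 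The heart of the argument is the claim that $\Exit$ is itself the left Kan extension of its restriction to $\Delta$, i.e.\ $\Exit \simeq \textup{Lan}_{\DeltaStrat{-}}(\Exit \circ \DeltaStrat{-})$: evaluated at $X \in \Strat$, full faithfulness of $\Exit$ together with $\Exit(\DeltaStrat{n}) \simeq \ord{n}$ identifies the comma $\infty$-category $\DeltaStrat{-} \downarrow X$ with the category of simplices of $\Exit(X)$, compatibly with the defining diagrams, so that the pointwise left Kan extension at $X$ computes $\colim_{\ord{n} \to \Exit(X)} \ord{n} \simeq \Exit(X)$ by density of $\Delta \hookrightarrow \CatInfty$ applied to the $\infty$-category $\Exit(X)$, naturally in $X$. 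Composing left Kan extensions (equivalently, composing the adjoints $(\DeltaStrat{-})^{*}$ and $\Exit^{*}$) then gives
\[
  \textup{Lan}_{\Exit}(\Exit) \simeq \textup{Lan}_{\Exit}\bigl(\textup{Lan}_{\DeltaStrat{-}}(\Exit \circ \DeltaStrat{-})\bigr) \simeq \textup{Lan}_{\Exit \circ \DeltaStrat{-}}(\Exit \circ \DeltaStrat{-}) \simeq \id_{\CatInfty},
\]
the last step being density of $\Delta \hookrightarrow \CatInfty$. Hence $\Exit$ is dense and the functor $\CatInfty \to \PSh(\Strat)$ is fully faithful.

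The step I expect to require the most care is the bookkeeping around $\Exit \circ \DeltaStrat{-}$: verifying that $\Exit(\DeltaStrat{n})$ is equivalent to $\ord{n}$ with the correct cosimplicial structure — the standard exit-path computation for the stratified simplex, which ultimately rests on the spaces of stratified maps between stratified simplices lying over a fixed order-preserving map being contractible (a straight-line homotopy argument inside $\StratPre$) — and establishing the identification of $\DeltaStrat{-} \downarrow X$ with the simplex category of $\Exit(X)$ together with its naturality in $X$, which is what upgrades $\Exit \simeq \textup{Lan}_{\DeltaStrat{-}}(\Exit \circ \DeltaStrat{-})$ from an objectwise equivalence to an equivalence of functors. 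A direct alternative would be to show that the simplex category $\Delta \downarrow \cat{C}$ is cofinal in $\Exit \downarrow \cat{C}$, but that cofinality check is more delicate than rewriting $\Exit$ as a left Kan extension, so I would avoid it. The remaining ingredients — the coYoneda rewriting, density of $\Delta$, and the composability of pointwise left Kan extensions — are formal.
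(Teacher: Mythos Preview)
Your argument is correct, but it takes a different and more elaborate route than the paper's. You reduce to showing that $\Exit$ is dense, and then establish density by writing $\Exit$ as the left Kan extension of its restriction to $\Delta$ and invoking density of $\Delta \hookrightarrow \CatInfty$. The paper instead observes that the functor in question factors as the composite
\[
  \CatInfty \hookrightarrow \PSh(\Delta) \xrightarrow{\;(\DeltaStrat{-})_*\;} \PSh(\Strat),
\]
where the first map is the Rezk nerve and the second is the \emph{right} adjoint to restriction along $\DeltaStrat{-}$. From the commutative triangle $\Exit \circ \DeltaStrat{-} \simeq (\Delta \hookrightarrow \CatInfty)$ and full faithfulness of $\Exit$, the functor $\DeltaStrat{-}$ is fully faithful, hence its right Kan extension $(\DeltaStrat{-})_*$ is fully faithful; since the Rezk nerve is fully faithful as well, so is the composite. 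This avoids the density machinery entirely and in particular sidesteps the coherence bookkeeping you flagged (identifying comma categories and checking that the objectwise equivalence $\Exit \simeq \textup{Lan}_{\DeltaStrat{-}}(\Exit \circ \DeltaStrat{-})$ is the canonical comparison map). Your approach does yield the stronger intermediate statement that $\Exit$ is dense, which may be of independent interest, but for the proposition at hand the paper's right-adjoint factorisation is shorter and requires only the single verification that $(\DeltaStrat{-})_*$ applied to the nerve of $\cat{C}$ returns $\CatInfty(\Exit(-), \cat{C})$.
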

\begin{proof}
  We have a commutative triangle of functors
  \[
    \begin{tikzcd}[column sep = 1.5cm]
      \Delta \ar[r, "\DeltaStrat{-}"] \ar[dr, hookrightarrow] &
      \Strat \ar[d, "\Exit", hookrightarrow] \\
      & \CatInfty
    \end{tikzcd}
  \]
  and so it follows that $\DeltaStrat{-}$ is fully faithful. This implies
  that the right adjoint $\PSh(\Delta) \to \PSh(\Strat)$ of
  the restriction functor $\PSh(\Strat) \to \PSh(\Delta)$
  is fully faithful as well. $\CatInfty$ is
  a full subcategory of $\PSh(\Delta)$ and by inspection the right adjoint
  functor is the functor from the claim.
\end{proof}

By Proposition~\ref{prop:strat-exit-ff} the representable presheaf $\Strat(-, X)$
of a stratified space $X$
is equivalent to the presheaf $\CatInfty(\Exit(-), \Exit(X))$. Hence we have
that $\Strat(-, X)$ represents the $\infty$-category $\Exit(X)$.

\section{Meshes}

\begin{figure}
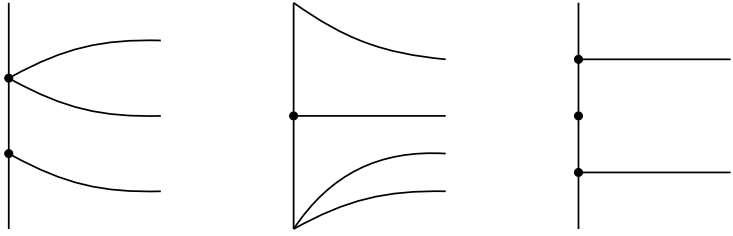

  \centering
  \tikzfig{mesh-inner-face}
  \qquad\qquad
  \tikzfig{mesh-outer-face}
  \qquad\qquad
  \tikzfig{mesh-degeneracy}
  \caption{Mesh bundles over $\DeltaStrat{1}$ which represent the inner face map $(0 \mapsto 0, 1 \mapsto 1, 2 \mapsto 3) : \ord{2} \to \ord{3}$,
  the outer face map $(0 \mapsto 2, 1 \mapsto 3) : \ord{1} \to \ord{4}$ and the degeneracy map $(0 \mapsto 0, 1 \mapsto 1, 2 \mapsto 1, 3 \mapsto 2) : \ord{3} \to \ord{2}$.
  The vertical line is the fibre over the $0$-stratum of $\DeltaStrat{1}$.
  The top and two bottom lines for the outer face map approach the boundary of the open interval $(-1, 1)$ as they approach the left side of the diagram.
\label{fig:mesh-implements-delta}}
\end{figure}

Meshes are a geometric version of trusses that augment truss bundles with a choice of coordinates.
Based on meshes, we can build a theory of tame stratified spaces which is equivalent to the purely combinatorial theory of trusses.

\begin{definition}
  A \textit{$1$-mesh} is a finite stratification of the open interval $\IntOpen$ such
  that every stratum is either an isolated point or an open subinterval.
  A point in a
  $1$-mesh is \textit{singular} if it is an isolated point and \textit{regular}
  otherwise.
\end{definition}

A $1$-mesh should be seen as representing the finite ordinal $\ord{n}$ in which every element $i \in \ord{n}$ is a regular stratum of the mesh.
The exit path $\infty$-category of a $1$-mesh is a $1$-truss, with regular and singular strata corresponding to the regular and singular elements of the truss.

\begin{definition}\label{def:mesh-bundle}
  A \textit{mesh bundle} over a stratified space $B$ is a map of poset-stratified spaces $p : E \to B$ which satisfies the following properties:
  \begin{enumerate}
    \item The underlying map of spaces of $p$ is the projection $B \times \IntOpen \to B$.
    \item $p$ restricts to a trivial bundle $S \times \IntOpen \to S$ of spaces for each stratum $S \subseteq B$.
    \item For each $b \in B$ the fibre $p^{-1}(b)$ is a $1$-mesh.
    \item The regular points of all fibres form an open subset of $B \times \IntOpen$.
  \end{enumerate}
\end{definition}

Mesh bundles over the stratified $1$-simplex $\DeltaStrat{1}$ correspond to order-preserving maps between the ordinals corresponding to the $1$-meshes over the endpoints.
This is illustrated in Figure~\ref{fig:mesh-implements-delta}.

\begin{remark}
  Replacing the open interval $(-1, 1)$ by any other open interval in $\R$ would yield an equivalent theory of meshes.
  When meshes are applied to characterise manifolds or stratified spaces embedded into $\R^n$ it is most natural to use $\R$ itself.
  For the purposes of this paper we have chosen an interval of finite length to simplify arguments based on linear interpolation.
\end{remark}

We note that we defined mesh bundles $p : E \to B$ over stratified spaces $B$ but merely required $E$ to be a poset-stratified space.
Lemma~\ref{lem:mesh-construct} below shows that $E$ has to be triangulable,
and via Proposition~\ref{prop:mesh-universal} we will see that $E$ is in fact a stratified space.

\begin{lem}
  Mesh bundles are closed under pullback.
\end{lem}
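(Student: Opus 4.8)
The plan is to verify the four defining properties of a mesh bundle in Definition~\ref{def:mesh-bundle} directly, one at a time, using that each property is either manifestly stable under pullback or reduces to a pointwise/stratumwise statement. Suppose $p : E \to B$ is a mesh bundle and $f : B' \to B$ is a map of stratified spaces; form the pullback $p' : E' = B' \times_B E \to B'$ in $\Top \downarrow \Pos$. Recall that the forgetful functor $\Top \downarrow \Pos \to \Top$ and the forgetful functor $\Top \downarrow \Pos \to \Pos$ both preserve pullbacks, and that pullbacks in $\Top$ are computed on underlying sets with the subspace/numerically-generated topology.

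First I would check property (1): the underlying space of $E'$ is the pullback of $B \times \IntOpen \to B \leftarrow B'$ in $\Top$, which is $B' \times \IntOpen$, and under this identification $p'$ becomes the projection $B' \times \IntOpen \to B'$. Next, property (3) is pointwise: for $b' \in B'$ with image $b = f(b') \in B$, the fibre $(p')^{-1}(b')$ is canonically homeomorphic, as a stratified space, to $p^{-1}(b)$ — indeed the fibre of a pullback over a point is the fibre of the original map over the image point — and the latter is a $1$-mesh by hypothesis, so the former is too. For property (2), let $S' \subseteq B'$ be a stratum; then $f$ maps $S'$ into a single stratum $S \subseteq B$ (since $f$ is a map of poset-stratified spaces, hence lies over a map of posets), and the restriction of $p'$ to $S'$ is the pullback along $S' \to S$ of the restriction $p|_S : S \times \IntOpen \to S$, which is a trivial $\IntOpen$-bundle; pullbacks of trivial bundles are trivial, so $(p')^{-1}(S') \to S'$ is $S' \times \IntOpen \to S'$. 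Finally, property (4): the regular locus of $p'$ is exactly the preimage under $f \times \id_{\IntOpen} : B' \times \IntOpen \to B \times \IntOpen$ of the regular locus of $p$ — this is immediate from the pointwise description of fibres and the fact that a point of $E'$ is regular in its fibre iff its image in $E$ is regular in its fibre — and since the regular locus of $p$ is open and $f \times \id$ is continuous, the regular locus of $p'$ is open in $B' \times \IntOpen$.

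The only step requiring a little care is property (2), since one must confirm that the stratumwise trivialisation on $B$ pulls back correctly even though $S' \to S$ need not be an isomorphism of strata (the stratification of $B'$ may refine or coarsen nothing in particular relative to $B$); but this is handled simply by observing that the restriction of $p'$ to $(p')^{-1}(S')$ is computed as a pullback of spaces over $S$, where $p$ is already trivialised, so there is no obstruction. All four verifications are routine once the preservation of pullbacks by the relevant forgetful functors is in hand, so I do not anticipate a genuine obstacle here; this lemma is essentially bookkeeping that licenses treating mesh bundles as a (pre)sheaf on stratified spaces, which is what the subsequent development needs.
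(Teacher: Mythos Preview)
Your proposal is correct and follows essentially the same approach as the paper: verify the four conditions of Definition~\ref{def:mesh-bundle} directly, with the only non-formal step being property~(4), handled by observing that the regular locus of the pullback is the preimage of the regular locus of $E$ under the induced continuous map $E' \to E$ (equivalently, under $f \times \id_{\IntOpen}$). The paper is terser about properties~(1)--(3), dispatching them in one sentence via the pasting lemma and preservation of pullbacks by the forgetful functor to $\Top$, whereas you spell each out; but the content is the same.
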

\begin{proof}
  Let $p : E \to B$ be a mesh bundle and $f : B' \to B$ a map of stratified spaces.
  The first three properties of Definition~\ref{def:mesh-bundle} are preserved by pullback by the pasting lemma and since the forgetful functor $\StratPre \to \Top$ preserves pullbacks.
  A point in $E \times_{B} B'$ over $b' \in B$ is regular if and only if it is regular in the fibre $(E \times_B B') \times_{B'} \{ b' \} \cong E \times_B \{ f(b') \}$.
  It follows that the set of regular points in $E \times_B B'$ is the preimage of the regular points in $E$ via the induced continuous map $E \times_B B' \to E$ and thus it is open.
\end{proof}

As mentioned above, the fibre of a mesh bundle $p : E \to B$ over any given point is determined up to a contractible choice of coordinates within the interval $\IntOpen$ by the number of its regular strata.
We show that this data fits together coherently in the form of a functor $\reg_p : \Exit(B) \to \FinOrd$. 
Similarly, we can obtain a functor $\sing_p : \Exit(B)^\op \to \StrictInt$ characterised by the singular strata of the fibres.
Moreover, any such functor gives rise to a mesh bundle which is determined up to a contractible choice.
In this sense, any mesh bundle is determined by combinatorial data.

\begin{lem}\label{lem:mesh-reg-monotone}
  Let $p : E \to B$ be a mesh bundle. Then there exists a unique well-defined functor
  \[ \reg_p : \Exit(B) \to \FinOrd \] which satisfies the following properties:
  \begin{enumerate}
    \item When $b \in B$ is a point in the base space then $\reg_p(b)$
          is the set of regular strata in the fibre $p^{-1}(b)$ with a total
          order induced by the canonical order on $\IntOpen$.
    \item When $\gamma : \DeltaStrat{1} \to B$ is an exit path in the base
          space, then $\reg_p(\gamma)$ sends a regular stratum $R_0$ over $\gamma(0)$
          to a regular stratum $R_1$ over $\gamma(1)$ when there exists a lift
          $\hat{\gamma} : \DeltaStrat{1} \to E$ of $\gamma$ which starts in
          $R_0$ and ends in $R_1$.
  \end{enumerate}
\end{lem}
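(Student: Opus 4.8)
The plan is to exhibit $\reg_p$ directly as a map of simplicial sets $\Exit(B) \to \Nerve(\FinOrd)$. Since $B$ is fibrant, $\Exit(B)$ is a quasicategory, and since $\FinOrd$ is (the nerve of) an ordinary category, such a map is the same datum as a functor $\Ho(\Exit(B)) \to \FinOrd$ out of the homotopy category. So it suffices to specify: a value on each object, a value on each $1$-simplex sending degenerate $1$-simplices to identities, and to check that every $2$-simplex witnesses a composition; the extension to all of $\Exit(B)$ then exists and is unique, which will also yield the asserted uniqueness of $\reg_p$. Property~(1) forces the value on objects: $\reg_p(b)$ is the totally ordered finite set of regular strata of the $1$-mesh $p^{-1}(b)$, a finite non-empty ordinal and hence an object of $\FinOrd$.

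For a $1$-simplex $\gamma : \DeltaStrat 1 \to B$, I would first replace $p$ by the pullback $\gamma^* p : \gamma^* E \to \DeltaStrat 1$, which is again a mesh bundle. The key structural input is that a mesh bundle over $\DeltaStrat 1$ is \emph{cylindrical over the open stratum}: its restriction to $(0,1] \subseteq \DeltaStrat 1$ is a stratified trivial bundle, so the regular strata of the fibres over the open stratum are canonically identified with one fixed finite ordinal, and in particular the fibre over $\gamma(1)$ is well-defined up to canonical identification. This is where condition~(4) of Definition~\ref{def:mesh-bundle} does real work: a point $r$ of a regular stratum $R_0$ of $p^{-1}(\gamma(0))$ has a neighbourhood in $E$ of regular points, hence continues to a regular point of every fibre over a small initial segment of $\gamma$; by connectedness of $R_0$ and of the open stratum this continuation is independent of the choices and, over each point of the open stratum, lands in a single regular stratum, whose value over $\gamma(1)$ is the desired $R_1$. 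The assignment $R_0 \mapsto R_1$ is order-preserving because the regular strata are disjoint open subintervals varying continuously and so cannot exchange order, and it matches property~(2): any exit-path lift $\hat\gamma$ starting in $R_0$ is, over the open stratum, confined to one regular stratum of $\gamma^* E$ whose restriction over $\gamma(1)$ is exactly $R_1$, and conversely such a lift exists by the cylindrical structure. A constant path lifts to a constant path, so degenerate edges are sent to identities.

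For functoriality I would check the $2$-simplex condition: for $\sigma : \DeltaStrat 2 \to B$ one must show $\reg_p(d_1\sigma) = \reg_p(d_0\sigma)\circ \reg_p(d_2\sigma)$. Pulling back along $\sigma$ and using that a mesh bundle over $\DeltaStrat 2$ is again cylindrical over its top (open) stratum, the continuation of a regular stratum $R_0$ over $\sigma(0)$ into the top stratum is unambiguous, and concatenating the lifts witnessing $\reg_p(d_2\sigma)$ and $\reg_p(d_0\sigma)$ produces a lift witnessing $\reg_p(d_1\sigma)$, forcing the composition identity. (Equivalently, one can package the whole construction by showing that the projection $E^{\reg}\to B$ from the open subspace of regular points of the fibres, with its induced stratification, induces a left fibration $\Exit(E^{\reg}) \to \Exit(B)$ with discrete fibres — the fibre over $b$ being $\Exit$ of a disjoint union of open intervals — so that $\reg_p$ is its straightening, refined by the fibrewise linear order; verifying the left-fibration property is the same geometric input, now phrased via horn lifting in all dimensions.)

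The hard part will be the cylindrical structure theorem for mesh bundles over $\DeltaStrat n$ together with the attendant essentially-unique exit-path lifting: that conditions~(1)--(4) of Definition~\ref{def:mesh-bundle}, combined with triangulability and path-connectedness of the strata of the base, genuinely force a mesh bundle to be stratified-locally-trivial. This is the geometric heart of the matter (shared with Lemma~\ref{lem:mesh-construct}), whereas the passage from that structure to the functor $\reg_p$, its order-preservation, and its functoriality is the routine bookkeeping outlined above.
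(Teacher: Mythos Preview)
Your proposal is correct and, in its alternative packaging via a left fibration $\Exit(E^{\reg}) \to \Exit(B)$ with discrete fibres, coincides almost exactly with the paper's framing: the paper begins by asserting that $\Exit(E) \to \Exit(B)$ restricts to a discrete fibration on the regular objects, and then checks order preservation separately. Your reduction to the homotopy category (since $\FinOrd$ is a $1$-category) is a clean way to organise the uniqueness claim, and your explicit treatment of the $2$-simplex condition makes the functoriality step visible where the paper leaves it implicit in the phrase ``discrete fibration''.

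The one substantive difference is that you overestimate the geometric input required. You identify the ``hard part'' as a cylindrical structure theorem --- that a mesh bundle over $\DeltaStrat{n}$ is stratified-locally-trivial --- and defer it. The paper never proves or invokes such a statement. Instead it argues the lifting properties for regular points directly and elementarily from condition~(4) alone: given an exit path $\gamma$ and a regular point $e = (\gamma(0), h)$, openness of the regular locus gives an $\varepsilon > 0$ such that $t \mapsto (\gamma(t), h)$ stays regular on $[0,\varepsilon]$, after which one simply remains in that regular stratum out to $t = 1$; uniqueness of the target stratum follows because two lifts starting at $e$ are trapped in a regular neighbourhood for small $t$ and hence share a stratum there; order preservation is an intermediate-value argument (if the lifts crossed they would meet, forcing the same terminal stratum). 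So the ``hard part'' you flag dissolves into three short paragraphs, and no appeal to triangulability of $B$, path-connectedness of base strata, or Lemma~\ref{lem:mesh-construct} is needed at this stage. Your route would work, but it front-loads a stronger structural lemma than the argument actually requires.
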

\begin{proof}
  To show that $\reg_p$ is well-defined as a map of sets we first show that the induced functor $\Exit(E) \to \Exit(B)$ restricts to a discrete fibration on the regular objects in $\Exit(E)$.

  Let $\gamma : \DeltaStrat{1} \to B$ be an elementary exit path in the base space and $e \in E$ a regular element such that $p(e) = \gamma(0)$.
  Since $E \cong B \times \IntOpen$ as spaces, we can write $e$ as the product $(\gamma(0), h)$.
  Since the regular
  points of $E$ form an open subset,
  there exists a $\varepsilon > 0$ such that the map $\hat{\gamma} : [0, \varepsilon] \to E$ defined by $t \mapsto (\gamma(t), h)$ is an exit path in $E$ lifting the restricted path $\gamma \upharpoonleft [0, \varepsilon]$.
  We can then extend $\hat{\gamma}$ to an exit path $\DeltaStrat{1} \to E$ lifting the entirety of $\gamma$ by staying within the regular stratum containing $\hat{\gamma}(\varepsilon)$.

  Suppose that $\hat{\gamma}_1, \hat{\gamma}_2$ are two lifts of $\gamma$ starting in a regular element $e \in E$ above $\gamma(0)$.
  Since the regular points of $E$ are an open subset there is an open neighbourhood $U \subseteq E$ of $e$ which does not intersect any singular element.
  In particular when $t > 0$ is small enough such that the path from $\hat{\gamma}_1(t)$ to $\hat{\gamma}_2(t)$ in the fibre over $\gamma(t)$ is contained in $U$, then there is no singular stratum between $\hat{\gamma}_1(t)$ and $\hat{\gamma}_2(t)$ and so both must lie in the same regular stratum.
  But then $\hat{\gamma}_1(1)$ and $\hat{\gamma}_2(1)$ must also lie in the same regular stratum.

  To show that the functor preserves the ordering let $\gamma: \DeltaStrat{1} \to B$ be an elementary exit path, $e_1, e_2$ regular points with $e_1 \leq e_2$ and
  $p(e_1) = p(e_2) = \gamma(0)$ and let $\hat{\gamma}_1$, $\hat{\gamma}_2$ be
  lifts of $\gamma$ starting at $e_1$ and $e_2$.
  Suppose that $\hat{\gamma}_1(1) > \hat{\gamma}_2(1)$.
  Then by continuity there exists a $0 < t < 1$ such that $\hat{\gamma}_1(t) = \hat{\gamma}_2(t)$ and thus $\hat{\gamma}_1(1)$ and $\hat{\gamma}_2(1)$ are in the same regular stratum.
\end{proof}

Similarly, each mesh bundle $p : E \to B$ determines a contravariant functor into $\StrictInt$
based on its singular strata. To make this precise, we need to define the compactification
of $p$.

\begin{definition}
  Let $p : E \to B$ be a mesh bundle.
  The \textit{compactification} of $p$ is the stratified bundle $\bar{p} : \bar{E} \to B$ with underlying map of spaces $B \times [-1, 1] \to B$ such that for each $b \in B$ the fibre $\bar{p}^{-1}(b)$ is obtained by padding the fibre $p^{-1}(b)$ with two $0$-strata at the endpoints.
\end{definition}

\begin{lem}\label{lem:mesh-sing-monotone}
  Let $p : E \to B$ be a mesh bundle. Then there exists a unique well-defined functor
  \[ \sing_p : {\Exit(B)}^\op \to \StrictInt \] which satisfies the following properties:
  \begin{enumerate}
    \item When $b \in B$ is a point in the base space then $\sing_p(b)$
          is the set of singular strata in the fibre $\bar{p}^{-1}(b)$ of
          the compactification with a total
          order induced by the canonical order on $[-1, 1]$.
    \item When $\gamma : \DeltaStrat{1} \to B$ is an exit path in the base
          space, then $\sing_p(\gamma)$ sends a singular stratum $S_1$ over $\gamma(1)$
          to a singular stratum $S_0$ over $\gamma(0)$ when there exists a lift
          $\hat{\gamma} : \DeltaStrat{1} \to \bar{E}$ of $\gamma$ which starts in
          $S_0$ and ends in $S_1$.
  \end{enumerate}
\end{lem}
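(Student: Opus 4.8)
The argument parallels the proof of Lemma~\ref{lem:mesh-reg-monotone}, with the open regular locus of $p$ replaced throughout by the closed singular locus of the compactified bundle $\bar{p} : \bar{E} \to B$, and with every exit-path lift performed in the reverse direction, so as to produce the contravariant functor $\sing_p$.

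First I would dispatch the object part and record the closedness we will need. For $b \in B$ the fibre $\bar{p}^{-1}(b)$ is the $1$-mesh $p^{-1}(b)$ padded by two $0$-strata at $\pm 1$, so its singular strata, ordered by the canonical order on $\IntClosed$, form a finite linear order with at least two elements whose minimum and maximum are the two padding strata; this is an object of $\StrictInt$, and the assignment $b \mapsto \sing_p(b)$ just described will be $\sing_p$ on objects. Dually to Definition~\ref{def:mesh-bundle}~(4), the singular locus of $\bar{p}$ is closed in $B \times \IntClosed$: compactification adds only singular points, so the regular locus of $\bar{p}$ equals that of $p$, which is open in $B \times \IntOpen$ and therefore open in $B \times \IntClosed$.

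For the morphism part, given an exit path $\gamma : \DeltaStrat{1} \to B$ and a singular point $e_1$ of $\bar{p}^{-1}(\gamma(1))$, I would lift $\gamma$ to an exit path $\hat{\gamma} : \DeltaStrat{1} \to \bar{E}$ ending at $e_1$ whose source $\hat{\gamma}(0)$ is again singular. Since $\gamma$ sends the open stratum of $\DeltaStrat{1}$ into a single (and, by hypothesis, path-connected) stratum $U \subseteq B$, over which $\bar{p}$ restricts to a trivial stratified bundle, the singular stratum of $e_1$ traces out a continuous wire $w(t) = (\gamma(t), c(t))$ on $(0,1]$ with $w(1) = e_1$; as $t \to 0$ its set of limit points inside $\{\gamma(0)\} \times \IntClosed$ is non-empty, compact and connected, and, the singular locus of $\bar{p}$ being closed, it consists of singular points of $\bar{p}^{-1}(\gamma(0))$ — but these are finitely many isolated points, so the set of limit points is a single point and $w$ extends continuously to $t = 0$ with $w(0)$ singular. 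Since $\bar{p}$ is a stratified map and closures of strata lie in down-sets, this extension is automatically an exit path. I would then set $\sing_p(\gamma)(S_1) = S_0$, where $S_1$ and $S_0$ are the singular strata of $e_1$ and $\hat{\gamma}(0)$; this is independent of the choice of $e_1 \in S_1$ and of the lift, because any two such lifts agree on $(0,1]$ with the restriction of $S_1$ over $U$, which is a single section, and hence share the limit at $t = 0$; the same argument shows the assignment is invariant under homotopies of $\gamma$. Monotonicity of $\sing_p(\gamma)$ follows from the crossing argument of Lemma~\ref{lem:mesh-reg-monotone}: two wires ending over $\gamma(1)$ in strata $S_1 < S_1'$ with $\sing_p(\gamma)(S_1) > \sing_p(\gamma)(S_1')$ would be forced by continuity to meet at some $0 < t < 1$, whence $S_1 = S_1'$. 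Preservation of minimum and maximum is automatic: the $\pm 1$ padding strata of the fibres assemble into the closed subspaces $B \times \{-1\}$ and $B \times \{+1\}$ of $\bar{E}$, so a lift ending in one of them must, by the down-set property again, start in the corresponding one. Hence $\sing_p(\gamma)$ is a morphism of $\StrictInt$. Functoriality on objects is immediate, functoriality on morphisms follows from the uniqueness of these backward lifts, and uniqueness of $\sing_p$ is forced because properties~(1) and~(2) determine its values on the objects and on the $1$-simplices of $\Exit(B)$.

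The crux is the existence step: a singular wire traced backwards must not terminate over the interior of the base inside a regular stratum, but must converge to a singular stratum of the fibre over $\gamma(0)$. This is exactly what the compactification secures — a wire that escapes towards the boundary of $\IntOpen$ as $t \to 0$, which genuinely occurs for the outer-face bundles in Figure~\ref{fig:mesh-implements-delta}, converges in $\bar{E}$ onto one of the two padding strata, so that $\sing_p(\gamma)$ stays total and valued in $\StrictInt$. A secondary technical point, used above, is that mesh bundles restrict over each stratum to trivial bundles not merely of spaces, as in Definition~\ref{def:mesh-bundle}~(2), but of stratified spaces, so that singular wires over a stratum are honestly locally constant; this follows from the mesh bundle axioms together with the local constancy of $\reg_p$ on strata.
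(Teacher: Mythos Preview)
Your proof is correct and follows essentially the same approach as the paper: construct the backward lift of a singular endpoint via the trivialisation over the open stratum of the exit path, extend it to $t=0$ using closedness of the singular locus in the compactification, and establish monotonicity by the same crossing argument. Your treatment is in fact more thorough than the paper's---you justify existence and uniqueness of the limit at $t=0$ explicitly via compactness and connectedness, verify that the result is an exit path, check endpoint preservation so the map lands in $\StrictInt$, and flag the need for the fibrewise trivialisation to be stratified---where the paper simply writes ``extend the lift by continuity'' and leaves these points implicit.
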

\begin{proof}
  To show that $\sing_p$ is well-defined as a map of sets we first show that the induced functor $\Exit(E) \to \Exit(B)$ restricts to a discrete fibration on the singular objects in $\Exit(E)$.

  Let $\gamma : \DeltaStrat{1} \to B$ be an elementary exit path in the base space
  and $e \in E$ a singular element such that
  $p(e) = \gamma(1)$.
  By using the local trivialisation, there is a unique lift of $\gamma$
  restricted to $\DeltaStrat{1} \cap (0, 1]$. Since the set of
  singular points is closed in $B \times [0, 1]$
  we can uniquely extend the lift
  by continuity to a lift $\hat{\gamma} : \DeltaStrat{1} \to \sing(E)$.

  To show that the functor preserves the ordering
  let $\gamma: \DeltaStrat{1} \to B$ be an elementary exit path,
  $e_1, e_2$ singular points with $e_1 \leq e_2$ and
  $p(e_1) = p(e_2) = \gamma(1)$ and let $\hat{\gamma}_1$, $\hat{\gamma}_2$ be
  the unique lifts of $\gamma$ ending at $e_1$ and $e_2$.
  Suppose that $\hat{\gamma}_1(0) > \hat{\gamma}_2(0)$.
  By continuity there
  must be a $0 < t < 1$ such that $\hat{\gamma}_1(t) = \hat{\gamma}_2(t)$.
  Because the lifts are stratum preserving, this implies that $e_1 = e_2$
  and leads to a contradiction. Hence $\hat{\gamma}_1(0) \leq \hat{\gamma}_2(0)$.
\end{proof}

\begin{lem}
  Let $p : E \to B$ be a mesh bundle. Then the following diagram commutes:
  \[
    \begin{tikzcd}[row sep = 0.1cm]
      & \FinOrd \ar[dd, "\cong"] \\
      \Exit(B) \ar[ur, "\reg_p"] \ar[swap, dr, "\sing_p"] \\
      & \StrictInt^\op
    \end{tikzcd}
  \]
  where $\FinOrd \cong \StrictInt^\op$ is the canonical duality between the category of finite non-empty ordinals and the category of strict intervals.
\end{lem}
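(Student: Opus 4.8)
The plan is to exploit that the target $\FinOrd \cong \StrictInt^\op$ is an ordinary category: since functors from an $\infty$-category into (the nerve of) a $1$-category are the same as functors out of its homotopy category, it suffices to compare $\reg_p$ with $\sing_p$, transported along the duality, on objects and on morphisms of $\Ho(\Exit(B))$, the latter being represented by exit paths $\DeltaStrat{1} \to B$. Compatibility with composition is then automatic from functoriality of $\reg_p$, $\sing_p$ and of the duality, so only two comparisons remain.

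\emph{Objects.} Fix $b \in B$ and list the regular strata of the $1$-mesh $p^{-1}(b) \subseteq \IntOpen$ in their canonical order as $R_0 < \cdots < R_n$, so $\reg_p(b) \cong \ord{n}$; they are separated by $n$ singular points. By the definition of the compactification, $\bar{p}^{-1}(b) \subseteq \IntClosed$ has exactly $n + 2$ singular strata $S_0 < \cdots < S_{n+1}$ with $S_0 = \{-1\}$, $S_{n+1} = \{1\}$, and $S_i$ (for $1 \le i \le n$) lying between $R_{i-1}$ and $R_i$, so that $\sing_p(b) \cong \ord{n+1} \in \StrictInt$. The canonical duality identifies $\ord{n} \in \FinOrd$ with the strict interval of down-closed subsets of $\ord{n}$, and $S_i \mapsto \{\, R_j : R_j \text{ lies to the left of } S_i \,\} = \{R_0, \ldots, R_{i-1}\}$ is an isomorphism of strict intervals from $\sing_p(b)$ onto this interval of cuts, sending $S_0$ to $\varnothing$ and $S_{n+1}$ to the top element. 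Hence the two composites agree on objects.

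\emph{Morphisms.} Mesh bundles are closed under pullback and the compactification commutes with pullback (both being formed fibrewise on underlying spaces), so $\reg_{\gamma^*p} = \reg_p \circ \Exit(\gamma)$ and $\sing_{\gamma^*p} = \sing_p \circ \Exit(\gamma)^\op$; it therefore suffices to prove, for every mesh bundle $q : E \to \DeltaStrat{1}$, that $\sing_q(0 \to 1)$ is the dual of $\alpha := \reg_q(0 \to 1) : \reg_q(0) \to \reg_q(1)$, where $0 \to 1$ is the generating morphism of $\Exit(\DeltaStrat{1})$. Under the object-level identification a singular stratum $S$ over $1$ corresponds to the down-set $D_S$ of regular strata over $1$ lying to its left, so we must show $S' := \sing_q(0 \to 1)(S)$ satisfies $D_{S'} = \alpha^{-1}(D_S)$, i.e.\ that a regular stratum $R'$ over $0$ lies left of $S'$ if and only if $\alpha(R')$ lies left of $S$. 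By Lemma~\ref{lem:mesh-sing-monotone}, $S'$ is the starting stratum of the unique lift of $0 \to 1$ to $\bar{E}$ ending at $S$; by Lemma~\ref{lem:mesh-reg-monotone} there is a lift of $0 \to 1$ staying inside regular strata, starting in $R'$ and ending in $\alpha(R')$. These are continuous, stratum-preserving paths in $\bar{E} = \DeltaStrat{1} \times \IntClosed$ over a common base path; one stays regular and the other stays singular, so they never meet, whence the strict left/right relation between their $\IntClosed$-coordinates is constant along the path. Reading it off at the two endpoints yields the required equivalence; for $S = S_0$ or $S = S_{n+1}$ the relevant lift is the constant path in $\DeltaStrat{1} \times \{\mp 1\}$, which recovers preservation of minimum and maximum.

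The only genuine obstacle is the morphism step, and within it the \emph{non-crossing} argument: one must know that lifts of an exit path through the regular strata, and the unique lifts through the singular strata of the compactification, can be chosen continuous and stratum-preserving, so that two such lifts over a common base path cannot interchange their vertical order. This is exactly the geometric input already isolated in the proofs of Lemmas~\ref{lem:mesh-reg-monotone} and~\ref{lem:mesh-sing-monotone}, so no new analysis is needed; everything else is bookkeeping organised around the slogan "singular strata of the compactified fibre $=$ cuts of its regular strata", which is precisely the duality $\FinOrd \cong \StrictInt^\op$.
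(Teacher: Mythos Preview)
The paper states this lemma without proof, so there is nothing to compare against; your task was effectively to supply the argument the paper omits, and you have done so correctly. Your reduction to the homotopy category (since the target is a $1$-category), the identification on objects via ``singular strata of the compactified fibre $=$ cuts of the regular strata'', and the non-crossing argument for morphisms using the lifts already constructed in Lemmas~\ref{lem:mesh-reg-monotone} and~\ref{lem:mesh-sing-monotone} are all sound. The only point worth making explicit is that the regular lift produced in Lemma~\ref{lem:mesh-reg-monotone} really does stay in the regular locus for all $t$ (the proof there first uses openness of the regular set on $[0,\varepsilon]$ and then remains in a single regular stratum), and likewise the singular lift in Lemma~\ref{lem:mesh-sing-monotone} stays singular (via the trivialisation over the open stratum and closedness of the singular set); you invoke this, and it is what makes the two lifted paths disjoint so that their vertical order is preserved.
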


\begin{lem}\label{lem:mesh-construct}
  Let $B$ be a stratified space and $f : \Exit(B)^\op \to \StrictInt$ a functor. Then
  there exists a mesh bundle $p : E \to B$ such that $\sing_p = f$. Moreover if
  $B$ is PL then $E$ can be chosen to be PL as well.
\end{lem}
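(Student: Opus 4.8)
The plan is to construct $p$ by hand: triangulate $B$, pass to a finer stratification, and build the mesh bundle by induction on skeleta, gluing local models obtained inside each simplex by linear interpolation in the finite interval $\IntOpen$, with the functoriality of $f$ guaranteeing that the local pieces agree on overlaps. First I would fix a triangulation ${|(V,S)|} \cong X$ of $B$ over a refinement $S \to P$ of the stratifying poset and work over the finer stratified space ${|(V,S)|} \to S$. This loses nothing, because a mesh bundle over ${|(V,S)|} \to S$ restricts over each stratum of $B$ to a trivial stratified bundle as soon as its fibrewise singular locus is there a disjoint union of continuous, pairwise non-crossing sections (one straightens the sections by a fibrewise, piecewise-linear homeomorphism of $\IntOpen$), and this will hold since $f$, being a functor on $\Exit(B)$ with $B$ having path-connected strata, sends every exit path that stays inside a single stratum of $B$ to an identity of $\StrictInt$: no two singular points of the fibres collide and none escapes to an endpoint while the base point moves inside one stratum of $B$. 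Using the standard identification of the exit-path $\infty$-category of a simplicial complex in its canonical stratification with the poset of its simplices ordered by face inclusion, the datum $f$ now amounts to an object $f(\sigma) = \ord{r_\sigma + 1}$ of $\StrictInt$ for each simplex $\sigma$ --- read as the $r_\sigma + 2$ singular strata of the compactified fibre over the open simplex $\sigma$, namely two padding endpoints and $r_\sigma$ interior singular points --- together with an endpoint-preserving monotone map $g_{\sigma\tau} \colon \ord{r_\sigma + 1} \to \ord{r_\tau + 1}$ for each face inclusion $\tau \leq \sigma$, subject to $g_{\tau\rho} \circ g_{\sigma\tau} = g_{\sigma\rho}$ whenever $\rho \leq \tau \leq \sigma$.

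Next I would build $p$ by induction over the subcomplexes of simplices of dimension $\leq d$. Over the $0$-skeleton, put over each vertex $v$ the $1$-mesh whose interior singular points sit at positions $-1 < z^v_1 < \dots < z^v_{r_v} < 1$, chosen to depend only on $r_v$ (equally spaced, say). For the inductive step at a simplex $\sigma$ of dimension $d + 1$, regard $\sigma = \cone(\partial\sigma)$ with cone point the barycenter, so that a point of $\sigma$ off $\partial\sigma$ is either the barycenter or a pair $(y, s)$ with $y \in \partial\sigma$ and $s \in [0, 1)$. I keep the mesh bundle already built over $\partial\sigma$ and, over $(y, s)$ with $y$ in the open face $\tau \leq \sigma$, I declare the interior singular points of the fibre to lie at
\[
  z^{\sigma y}_j(s) \;=\; (1 - s)\, z^\sigma_j + s\, w^y_{g_{\sigma\tau}(j)}, \qquad j = 1, \dots, r_\sigma,
\]
where $z^\sigma_1 < \dots < z^\sigma_{r_\sigma}$ is the configuration chosen for $r_\sigma$ and $-1 = w^y_0 < w^y_1 < \dots < w^y_{r_\tau} < w^y_{r_\tau + 1} = 1$ lists the singular strata of the compactified fibre over $y$. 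For $s < 1$ these are $r_\sigma$ distinct points of $\IntOpen$, since $z^\sigma_j \in \IntOpen$ retains a positive weight; as $s \to 1$ they converge, along the pattern prescribed by $g_{\sigma\tau}$, onto the singular strata of the fibre over $y$ --- some possibly onto a padding endpoint --- so the definition extends continuously to $\partial\sigma$ and restricts there to the previously built stage.

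The one thing to verify is that this is independent of the open face $\tau$ chosen to contain a boundary point $y$, equivalently that the local interpolants agree as $y$ passes from one open face to an adjacent one through a common sub-face $\rho$. By the continuity already established over $\partial\sigma$ at the previous stage, the limit of $w^y_k$ as $y \to \rho$ is $w^\rho_{g_{\tau\rho}(k)}$; hence the limit of $z^{\sigma y}_j(s)$ is exactly the $\rho$-interpolant, precisely because $g_{\tau\rho} \circ g_{\sigma\tau} = g_{\sigma\rho}$. I expect this coherence to be the main obstacle of the proof: the remaining verifications are essentially book-keeping, but here one uses in an essential way that $f$ is a functor and not merely a vertex-wise labelling --- the same phenomenon that makes the elementary mesh bundles of Figure~\ref{fig:mesh-implements-delta} assemble along the face and degeneracy maps of $\Delta$.

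Finally I would collect the conclusions. The fibrewise singular locus of $p$ is a finite union of piecewise-linear sheets and is closed in ${|(V,S)|} \times \IntOpen$, since along each simplex the sheets that do not escape converge onto singular strata of the boundary fibres; hence the regular locus is open, and over each open simplex --- and therefore, by the first step, over each stratum of $B$ --- the singular locus is a disjoint union of pairwise non-crossing sections, so $p$ is a mesh bundle over $B$. By construction the compactified fibre over a point of an open face $\tau$ has exactly the singular strata $\ord{r_\tau + 1} = f(\tau)$, and following the lifts of elementary exit paths through the displayed formula shows that $\sing_p$ agrees with $f$ on morphisms as well; together with the uniqueness clause of Lemma~\ref{lem:mesh-sing-monotone} this gives $\sing_p = f$. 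If $B$ is PL, one takes a PL triangulation and rational configurations $z^\sigma_j$; then the cone coordinates on the simplices, the interpolation formulas, and the fibrewise section-straightening homeomorphisms are all piecewise linear, so $E$ may be chosen PL.
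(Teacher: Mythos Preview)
Your proposal is correct and follows the same strategy as the paper: triangulate $B$ compatibly with its stratification, choose heights for the singular strata over distinguished points, and extend by piecewise-linear interpolation, with the functoriality of $f$ supplying coherence. The paper interpolates directly from the vertices using barycentric coordinates --- over a $k$-simplex $\sigma$ the height of the point indexed by $s \in f(\sigma(k))$ at barycentric coordinates $(t_0,\ldots,t_k)$ is $\sum_i t_i\, h_i(f_i(s))$ for the structure maps $f_i\colon f(\sigma(k))\to f(\sigma(i))$ --- rather than coning from barycenters, which makes the restriction to faces automatic and dispenses with the inductive compatibility check you carry out; otherwise the two arguments coincide.
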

\begin{proof}
  We pick a triangulation of $B$ that is compatible with the stratification.
  When $B$ is PL then we can pick the triangulation such that each simplex
  $\DeltaStrat{k} \hookrightarrow B$ of the triangulation is linear. We
  then construct a stratified bundle $\bar{p} : \bar{E} \to B$ such that the
  fibre-wise interior is a mesh bundle $p : E \to B$.

  For every vertex $x$ of the triangulation we choose a strictly
  monotone endpoint preserving map $f(x) \to [0, 1]$. This determines
  the heights of the singular strata over $x$.

  Let $k > 0$ and assume by induction that we have already constructed the extended mesh bundle
  over the $(k - 1)$-skeleton of the triangulation of $B$.
  Let $\sigma : \DeltaStrat{k} \hookrightarrow B$ be a stratified simplex of
  the triangulation, then
  for every $0 \leq i \leq k$ the functor $f(\sigma) : \ord{k}^\op \to \StrictInt$
  induces a map $f_i : f(k) \to f(i)$.
  For any point $x$ in the interior of $\DeltaStrat{k}$
  we let the height of the singular stratum over $\sigma(x)$ corresponding
  to some $s \in f(\sigma(x))$ be the convex combination of the heights of
  the singular strata $f_i(s)$ over the vertices of $\sigma$, with coefficients
  taken from the barycentric coordinates of $x$.
\end{proof}

\begin{construction}\label{constr:mesh-base}
  For a stratified space $B$ we denote by $\BMesh(B)$ the simplicial set whose
  $n$-simplices consist of mesh bundles of the form $p : E \to B \times
    \DeltaTop{n}$.
  By pullback along the base space $B$
  the spaces $\BMesh(B)$ arrange to a functor $\BMesh(-) : \Strat^\op \to \sSet$.
\end{construction}

The following Proposition shows that $\BMesh$ is a geometrical version of $\Delta$ in which each finite non-empty ordinal is equipped with coordinates and order-preserving maps are realised as bordisms as in Figure~\ref{fig:mesh-implements-delta}.
The space of choices for these coordinates is contractible and hence $\BMesh$ is equivalent to $\Delta$ as an $\infty$-category.

\begin{prop}\label{prop:mesh-classify}
  The functor $\BMesh : \Strat^\op \to \sSet$ from
  Construction~\ref{constr:mesh-base} factors through the inclusion
  $\Space \hookrightarrow \sSet$ and represents an $\infty$-category equivalent
  to $\FinOrd$.
\end{prop}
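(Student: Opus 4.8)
The plan is to use the results just established, by which $\CatInfty$ embeds fully faithfully into $\PSh(\Strat)$ via $\cat{C} \mapsto \CatInfty(\Exit(-), \cat{C})$; under this embedding the statement ``$\BMesh$ represents an $\infty$-category equivalent to $\FinOrd$'' unwinds to the two claims that $\BMesh(-)$ is levelwise a Kan complex and that $\BMesh(-) \simeq \CatInfty(\Exit(-), \FinOrd)$ in $\PSh(\Strat)$. I would prove both at once by constructing a natural transformation
\[
  \reg \colon \BMesh(-) \longrightarrow \CatInfty(\Exit(-), \FinOrd)
\]
and showing that each component $\reg_B$ is a trivial Kan fibration: then $\BMesh(B)$ is a Kan complex since it admits a trivial fibration onto one (so $\BMesh$ factors through $\Space$), and $\reg$ is the sought equivalence. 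The transformation is supplied by Lemma~\ref{lem:mesh-reg-monotone}: a mesh bundle $p \colon E \to B \times \DeltaTop{n}$ has a classifying functor $\reg_p \colon \Exit(B \times \DeltaTop{n}) \to \FinOrd$; since $\Exit$ is a right adjoint it preserves products, and since $\Sing(\DeltaTop{n})$ is a contractible Kan complex --- in particular the localisation of $\Delta\ord{n}$ at all of its morphisms --- restriction along $\Exit(B) \times \Delta\ord{n} \to \Exit(B) \times \Sing(\DeltaTop{n}) \simeq \Exit(B \times \DeltaTop{n})$ turns $\reg_p$ into a functor $\Exit(B) \times \Delta\ord{n} \to \FinOrd$ inverting every morphism of $\Delta\ord{n}$, i.e.\ an $n$-simplex of the Kan complex $\Core\Fun(\Exit(B), \Nerve\FinOrd)$ modelling $\CatInfty(\Exit(B), \FinOrd)$. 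Naturality in $B$ is the compatibility of $\reg_{(-)}$ with pullback of bundles, and simpliciality in $\ord{n}$ is immediate. This also explains why Construction~\ref{constr:mesh-base} uses the trivially stratified $\DeltaTop{n}$ rather than $\DeltaStrat{n}$: the $\Delta\ord{n}$-direction records coherent \emph{equivalences} of classifying functors, carving out the core.

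To show $\reg_B$ is a trivial Kan fibration I would verify the right lifting property against $\partial\Delta\ord{n} \hookrightarrow \Delta\ord{n}$. Such a problem consists of an $n$-simplex $\sigma$ of $\CatInfty(\Exit(B), \FinOrd)$ together with a lift $\tau$ of $\sigma|_{\partial\Delta\ord{n}}$ to $\BMesh(B)$, that is, a compatible family of mesh bundles over the faces of $B \times \DeltaTop{n}$. First I would glue these into a single mesh bundle $q$ over $B \times |\partial\Delta\ord{n}|$; this is legitimate because the clauses of Definition~\ref{def:mesh-bundle} are fibrewise except for the openness of the regular locus, and the boundary is covered by \emph{finitely many} closed faces, over which openness in $(B \times |\partial\Delta\ord{n}|) \times \IntOpen$ is detected. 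Unwinding that $\Sing(\DeltaTop{n})$ is the localisation of $\Delta\ord{n}$ at all its morphisms, together with the duality $\FinOrd \cong \StrictInt^\op$, the simplex $\sigma$ corresponds to a functor $f \colon \Exit(B \times \DeltaTop{n})^\op \to \StrictInt$ extending $\sing_q$ along $B \times |\partial\Delta\ord{n}| \hookrightarrow B \times \DeltaTop{n}$. By Lemma~\ref{lem:mesh-construct} there is \emph{some} mesh bundle $p$ over $B \times \DeltaTop{n}$ with $\sing_p = f$; it need not restrict to $q$, but its boundary restriction has the same classifying functor $\sing_q$ as $q$, so the two differ only by coordinates within $\IntOpen$. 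I would then repair this along a collar $B \times |\partial\Delta\ord{n}| \times [0,1) \hookrightarrow B \times \DeltaTop{n}$, interpolating the heights of the singular strata linearly from those prescribed by $q$ at the boundary to those of $p$ away from the collar. Because $q$ and the boundary restriction of $p$ have the same combinatorial type, these convex combinations of strictly monotone height configurations remain strictly monotone, so the result is a genuine mesh bundle $p'$ which restricts to $q$ on $B \times |\partial\Delta\ord{n}|$, which has unchanged classifying functor $f$ (the combinatorics is constant along the interpolation), and which therefore maps under $\reg_B$ to $\sigma$ while lifting $\tau$. The case $n = 0$ recovers essential surjectivity of $\reg_B$ from Lemma~\ref{lem:mesh-construct} and the compatibility of $\reg_p$ with $\sing_p$, and the construction above is uniform in $n$.

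The hard part will be this repair step: forcing the realised bundle to match prescribed coordinate data on the boundary. Two points need genuine care. First, linear interpolation of singular-stratum heights along the collar is necessarily valued in a bounded interval --- precisely why $\IntOpen$ was chosen of finite length --- and one must check that it preserves not only strict monotonicity of the fibres but, crucially, openness of the regular locus in the total space $(B \times \DeltaTop{n}) \times \IntOpen$, including where the collar coordinate meets the boundary. Second, the finite-closed-cover gluing of $q$ requires verifying that each clause of Definition~\ref{def:mesh-bundle} for the glued bundle --- again, openness is the only nontrivial one --- really follows from its validity on each closed face. I would isolate these as a single lemma: a mesh bundle can be extended off a triangulated closed subspace so as both to realise a prescribed classifying functor and to restrict to a prescribed mesh bundle on that subspace; its proof is a bookkeeping of convex combinations together with the finite-closed-cover observation and the relative form of the construction in Lemma~\ref{lem:mesh-construct}. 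Granting this lemma, $\reg_B$ is a trivial Kan fibration for every stratified space $B$, hence $\BMesh$ lands in $\Space$ and $\reg$ exhibits $\BMesh(-) \simeq \CatInfty(\Exit(-), \FinOrd)$, which represents the $\infty$-category $\FinOrd$.
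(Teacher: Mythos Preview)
Your proposal is correct and follows the same strategy as the paper: construct the comparison map $\reg$ to the mapping space $\CatInfty(\Exit(B),\FinOrd)$ and show it is a trivial Kan fibration by solving the lifting problems against $\partial\Delta\ord{n}\hookrightarrow\Delta\ord{n}$. The only difference is in how the extension step is carried out: the paper extends a mesh bundle from $B\times\partial\DeltaTop{k}$ to $B\times\DeltaTop{k}$ in one move by linearly interpolating the singular heights (implicitly using the cone structure $\DeltaTop{k}\cong\cone(|\partial\Delta\ord{k}|)$), whereas you first produce an arbitrary realisation via Lemma~\ref{lem:mesh-construct} and then repair it along a collar; your version is more explicit about the bookkeeping the paper leaves to the phrase ``linear interpolation,'' and your care in identifying the target as the core rather than the full functor quasicategory is well placed.
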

\begin{proof}
  We show that for every stratified space $B$ the map $\BMesh(B) \to
    \sSet(\Exit(B), \FinOrd)$ of simplicial sets which assigns
  a mesh bundle $p: E \to B \times \DeltaTop{n}$ the induced map
  $\reg_p : \Exit(B) \times \Delta\ord{n} \to \FinOrd$
  is a trivial Kan fibration.
  Let $k \geq 0$ and consider a lifting problem
  \[
    \begin{tikzcd}
      \partial \Delta\ord{k} \ar[d] \ar[r] & \BMesh(B) \ar[d] \\
      \Delta\ord{k} \ar[r] \ar[ur, dashed] & \sSet(\Exit(B), \FinOrd)
    \end{tikzcd}
  \]
  When $k = 0$ this follows from Lemma~\ref{lem:mesh-construct}. For $k > 0$ we have
  a mesh bundle over $B \times \partial \DeltaTop{k}$ which we can extend to a mesh bundle
  over $B \times \DeltaTop{k}$ via linear interpolation of the position of the singular strata.
\end{proof}

For every mesh bundle $p : E \to B$ the equivalence $\BMesh(B) \simeq \CatInfty(\Exit(B), \BMesh)$
yields a functor $\Exit(B) \to \BMesh$ which in components sends a stratified map
$A \to B$ to the pullback mesh bundle $E \times_B A \to A$.

\section{Labelled Meshes}

Since mesh bundles on a stratified space $B$ are determined by functors $\Exit(B) \to \FinOrd$, there is an equivalence between mesh bundles on $B$ and unlabelled truss bundles on $\Exit(B)$.
We will see that for any mesh bundle $p : E \to B$, also $E$ is a stratified space and so $\Exit(E)$ is an $\infty$-category.
Then by equipping mesh bundles $p : E \to B$ with labelling functors $\Exit(E) \to \cat{C}$ in an $\infty$-category $\cat{C}$, we can extend the equivalence between meshes and trusses to the labelled case.

\begin{definition}
  Let $B$ be a stratified space. Then $\EMesh(B)$ is the simplicial set whose
  $n$-simplices consist of mesh bundles of the form $p : E \to B \times \DeltaTop{n}$ together with a section $B \times \DeltaTop{n} \to E$.
  By pullback along the base
  space the spaces $\EMesh(B)$ arrange into a functor $\EMesh(B) : \Strat^\op \to \sSet$.
\end{definition}

\begin{lem}\label{lem:mesh-forget-section-kan}
  Let $B$ be a stratified space. Then
  the canonical map
  \[ \EMesh(B) \to \BMesh(B) \]
  which forgets the section is a Kan fibration. In particular $\EMesh(-)$ defines
  a functor $\Strat^\op \to \Space$.
\end{lem}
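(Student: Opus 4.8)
The plan is to show the map $\EMesh(B) \to \BMesh(B)$ has the right lifting property against the horn inclusions $\Lambda^n_k \hookrightarrow \Delta\ord{n}$ for all $0 \le k \le n$. Unwinding the definitions, a lifting problem consists of a mesh bundle $p : E \to B \times \DeltaTop{n}$, together with a section $\sigma$ of the pulled-back bundle over the subspace $B \times |\Lambda^n_k|$, and we must extend $\sigma$ to a section of $p$ itself over all of $B \times \DeltaTop{n}$. Since by Definition~\ref{def:mesh-bundle} the underlying space of $E$ is $B \times \DeltaTop{n} \times \IntOpen$ and $p$ is the projection, a section over any subspace $U \subseteq B \times \DeltaTop{n}$ is the same data as a continuous map $h : U \to \IntOpen$ together with the requirement that $(u, h(u))$ lie in the correct stratum of $E$ over each $u$ — but since a section only needs to be a map of poset-stratified spaces (and the base point $u$ is fixed), there is no stratification constraint beyond continuity: any continuous $h : U \to \IntOpen$ gives a section. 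Hence the lifting problem reduces to the purely topological problem of extending a continuous map $B \times |\Lambda^n_k| \to \IntOpen$ to a continuous map $B \times \DeltaTop{n} \to \IntOpen$.

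The key step is then to produce such an extension. Because $|\Lambda^n_k| \hookrightarrow \DeltaTop{n}$ is a cofibration of spaces admitting a retraction $\rho : \DeltaTop{n} \to |\Lambda^n_k|$ (choose any retraction of the topological horn inclusion, exactly as in the proof that $\Top \downarrow \Pos$ is Kan-enriched), we can precompose with $\id_B \times \rho$ to obtain the required extension $B \times \DeltaTop{n} \xrightarrow{\id \times \rho} B \times |\Lambda^n_k| \xrightarrow{h} \IntOpen$. This is continuous and restricts to $h$ on $B \times |\Lambda^n_k|$, so it solves the lifting problem. Naturality in $B$ is clear since everything is built from pullback. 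The final sentence — that $\EMesh(-)$ lands in $\Space$ — then follows because $\BMesh(-)$ already lands in $\Space$ by Proposition~\ref{prop:mesh-classify} (its values are Kan complexes), and a Kan fibration over a Kan complex has Kan complex total space; naturality of the fibration in $B$ promotes this to a functor $\Strat^\op \to \Space$.

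The main subtlety to be careful about is the claim that a section of $p$ over a subspace is nothing more than a continuous scalar function into $\IntOpen$, i.e.\ that the stratification of $E$ imposes no further constraint. This holds because a map of poset-stratified spaces $U \to E$ covering the given map $U \to B \times \DeltaTop{n}$ must send each point $u$ into the fibre $p^{-1}(u)$, and on the level of posets the section is forced — the section's underlying poset map is determined — so the only real data is the continuous function; one should note that continuity of $h$ into $\IntOpen$ is automatic from continuity of the section into $E = B \times \DeltaTop{n} \times \IntOpen$ via the coordinate projection, and conversely any continuous $h$ assembles into a continuous stratified section. I expect this bookkeeping, rather than the extension argument, to be the only place where care is needed; the extension itself is immediate from the existence of a retraction of the horn inclusion.
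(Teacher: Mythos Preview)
There is a genuine gap: the assertion that ``any continuous $h : U \to \IntOpen$ gives a section'' is false. A section of $p$ must be a morphism in $\Top \downarrow \Pos$, and such a morphism must carry each stratum of the source into a \emph{single} stratum of the target. Your reasoning that ``on the level of posets the section is forced'' conflates two things: the poset map is uniquely determined \emph{if it exists}, but for an arbitrary continuous $h$ it generally does not. Concretely, take $B = \ast$ and $n = 1$, and let $p : E \to \DeltaTop{1}$ be the mesh bundle with underlying space $[0,1] \times \IntOpen$ whose sole singular stratum is the segment $\{(t,\, t/2 - 1/4) : t \in [0,1]\}$. Over the horn $\HornTop{1}{0} = \{0\}$ take the section $\sigma(0) = (0,-\tfrac14)$, the singular point. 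Your retraction produces the extension $\tilde\sigma(t) = (t,-\tfrac14)$, which lies in the singular stratum at $t=0$ and in the lower regular stratum for all $t > 0$; since $\DeltaTop{1}$ has only one stratum, $\tilde\sigma$ is not a map of poset-stratified spaces and hence not a section in the required sense. The correct extension here is $\tilde\sigma(t) = (t,\, t/2 - 1/4)$, which tracks the moving singular stratum.

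The paper's proof does not use a retraction of the horn inclusion. It fixes a triangulation of $B$ compatible with the stratification and constructs the filler by induction on the skeleta, so that over each simplex the section can be made to follow the strata of $E$ rather than cut across them. You were right to flag this as the ``main subtlety''; it is exactly where the argument breaks.
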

\begin{proof}
  We need to verify that for every solid diagram of the form
  \[
    \begin{tikzcd}
      B \times \HornTop{n}{i} \ar[r] \ar[d, hookrightarrow] & E \ar[d, "p"] \\
      B \times \DeltaTop{n} \ar[r, "\id", swap] \ar[ur, dashed] & B \times \DeltaTop{n}
    \end{tikzcd}
  \]
  where $p$ is a mesh bundle, there exists a dashed filler.
  The filler can be constructed by fixing a triangulation of $B$ compatible
  with the stratification, and then proceeding by induction on the skeleta.
\end{proof}

When $p : E \to B$ is a mesh bundle then the pullback $E \times_B E \to E$ of $p$
along itself is a mesh bundle and admits a canonical section $E \to E \times_B E$.
This defines an element of $\EMesh(E)$ and hence a functor $\Exit(E) \to \EMesh$.

\begin{lem}\label{lem:mesh-to-truss-1}
  Let $p : E \to B$ be a mesh bundle together with a section $s : B \to E$.
  Then there exists a unique well defined functor $f : \Exit(B) \to \ETruss$
  which fits into the diagram
  \[
    \begin{tikzcd}
      & \ETruss \ar[d] \\
      \Exit(B) \ar[r, "\reg_p", swap] \ar[ur, "f", dashed] & \FinOrd
    \end{tikzcd}
  \]
  and is defined on objects $b \in B$ as follows:
  \begin{enumerate}
    \item $f(b) = r_i \ord{n}$ when $s(b)$ is the $i$th regular stratum above $b$
          and $\reg_p(b) = \ord{n}$.
    \item $f(b) = s_i \ord{n}$ when $s(b)$ is the $i$th singular stratum above $b$
          and $\reg_p(b) = \ord{n}$.
  \end{enumerate}
\end{lem}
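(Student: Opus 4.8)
The plan is to define $f$ on objects by the formula in the statement and on a morphism $\gamma$ of $\Exit(B)$ by the underlying order-preserving map $\reg_p(\gamma)$; the entire content of the lemma is then to check that this underlying map always lies in the prescribed hom-set of $\ETruss$, after which functoriality and uniqueness are formal because $\ETruss \to \FinOrd$ is faithful. To fix notation, for $b \in B$ with $\reg_p(b) = \ord{n}$ the fibre $p^{-1}(b)$ is a $1$-mesh whose strata in increasing height order are $R_0, S_0, R_1, \dots, S_{n-1}, R_n$, with $R_k$ regular and $S_k$ singular, and $f(b)$ is $r_k\ord{n}$ or $s_k\ord{n}$ according to which of these contains $s(b)$. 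A morphism $\gamma : \DeltaStrat{1} \to B$ represents an arrow $b_0 \to b_1$ of $\Exit(B)$ with $b_0$ in a stratum $U$ of $B$ and $\gamma((0,1]) \subseteq V$ for strata $U \le V$, and $s\gamma := s \circ \gamma$ is a lift of $\gamma$ to $E$ with $s\gamma(0) = s(b_0)$ and $s\gamma(1) = s(b_1)$; write $\alpha := \reg_p(\gamma) : \ord{n} \to \ord{m}$.

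If $s(b_0)$ lies in a regular stratum, so $f(b_0) = r_i\ord{n}$, then $s\gamma$ is an exit path in $E$ starting at a regular point, and by Lemma~\ref{lem:mesh-reg-monotone} and the analysis in its proof — any two lifts of $\gamma$ from a common regular point terminate in the same regular stratum, and the explicitly constructed one terminates in $\reg_p(\gamma)(R_i)$ — we get $f(b_1) = r_{\alpha(i)}\ord{m}$, so $\alpha \in \ETruss(r_i\ord{n}, r_{\alpha(i)}\ord{m})$ by Equation~\ref{eqn:truss-regular-regular}. In particular a regular source forces a regular target, so the emptiness of Equation~\ref{eqn:truss-regular-singular} is never an obstruction.

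The main case is $s(b_0) \in S_i$, so $f(b_0) = s_i\ord{n}$; here I must show that $s(b_1)$ lies between the regular strata of $p^{-1}(b_1)$ indexed $\alpha(i)$ and $\alpha(i+1)$, precisely: $\alpha(i) \le j \le \alpha(i+1)$ if $s(b_1) \in R_j$ (Equation~\ref{eqn:truss-singular-regular}) and $\alpha(i) \le j < \alpha(i+1)$ if $s(b_1) \in S_j$ (Equation~\ref{eqn:truss-singular-singular}). I pick heights $a < s(b_0) < a'$ in $p^{-1}(b_0)$ with $a \in R_i$ and $a' \in R_{i+1}$, and lift $\gamma$ at these points to exit paths $\rho, \rho'$ which by Lemma~\ref{lem:mesh-reg-monotone} terminate in the regular strata indexed $\alpha(i)$ and $\alpha(i+1)$ over $b_1$. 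Writing $s\gamma$, $\rho$ and $\rho'$ in the interval coordinate of $E \cong B \times \IntOpen$ as continuous functions of the path parameter, at parameter $0$ the height of $s\gamma$ lies strictly between those of $\rho$ and $\rho'$; if it stays strictly between them throughout, then $s(b_1)$ lies strictly between a point of $R_{\alpha(i)}$ and a point of $R_{\alpha(i+1)}$ in $p^{-1}(b_1)$, which unwinds to exactly the required inequalities; if instead the height of $s\gamma$ meets that of $\rho$ or $\rho'$ at some parameter $t_\ast \in (0,1]$, then $s\gamma(t_\ast)$ is a regular point, and since $\gamma$ restricted to $[t_\ast, 1]$ stays inside the connected stratum $V$ and is therefore sent by $\reg_p$ to an identity, Lemma~\ref{lem:mesh-reg-monotone} forces $s(b_1)$ into the regular stratum indexed $\alpha(i)$ (resp. $\alpha(i+1)$), where the inequalities of Equation~\ref{eqn:truss-singular-regular} hold trivially.

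Finally $f$ sends identities to identities — the identity map lies in $\ETruss(r_i\ord{n}, r_i\ord{n})$ and in $\ETruss(s_i\ord{n}, s_i\ord{n})$ — and it respects composition since composition in $\ETruss$ is computed on underlying maps and $\reg_p$ is a functor; as $\ETruss$ is an ordinary category this data assembles into a functor $\Exit(B) \to \ETruss$ over $\reg_p$, and uniqueness is immediate from the faithfulness of $\ETruss \to \FinOrd$ together with the prescribed values on objects. I expect the delicate point to be the singular-source case: the squeezing argument between $\rho$, $s\gamma$ and $\rho'$ must be arranged so that the degenerate configurations — $\alpha(i) = \alpha(i+1)$, or $s\gamma$ crossing one of the two regular lifts — are recognised as precisely the ones producing the boundary values $j = \alpha(i)$ or $j = \alpha(i+1)$, so that no case of the $\ETruss$ inequalities is missed.
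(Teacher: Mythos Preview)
Your argument is correct and the overall structure---reduce to checking that $\reg_p(\gamma)$ lands in the right hom-set of $\ETruss$, then invoke faithfulness of $\ETruss \to \FinOrd$ for uniqueness and functoriality---matches the paper. The regular-source case and the singular-to-regular case are handled in essentially the same way as the paper (the paper's one-line justification for the latter is exactly your squeezing idea, stated more tersely).

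The genuine difference is in the singular-to-singular case. The paper does not squeeze here: instead it invokes Lemma~\ref{lem:mesh-sing-monotone} on $\sing_p$, observing that $s\circ\gamma$ is a lift of $\gamma$ ending in a singular stratum, so $\sing_p(\gamma)$ sends the target singular index to the source singular index, and then the duality $\StrictInt^\op \cong \FinOrd$ translates this into the inequality $\alpha(i) \le j < \alpha(i+1)$. Your approach avoids $\sing_p$ entirely and treats both singular-source cases uniformly with the comparison paths $\rho, \rho'$; this is more elementary and self-contained, and your handling of the degenerate crossing case is careful (it relies implicitly on the fact that exit paths from regular points stay regular, which follows from the regular locus being open). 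The paper's route is shorter once $\sing_p$ is in hand and makes the role of the $\FinOrd$/$\StrictInt$ duality visible, whereas your route shows that Lemma~\ref{lem:mesh-sing-monotone} is not strictly needed for this particular lemma.
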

\begin{proof}
  Every functor $\ord{n} \to \ETruss$ uniquely factors through the pullback:
  \[
    \begin{tikzcd}
      \ord{n} \ar[ddr, bend right, swap, "\id"] \ar[drr, bend left = 15] \ar[dr, dashed] \\
      & \ETruss \times_\FinOrd \ord{n} \ar[r] \ar[d] \pullback & \ETruss \ar[d] \\
      & \ord{n} \ar[r] & \FinOrd
    \end{tikzcd}
  \]
  Since $\ord{n}$ is a poset so is $\ETruss \times_\FinOrd \ord{n}$. It follows
  that the functor $f$ is uniquely determined by its action on
  objects and to show that it exists it suffices to demonstrate that when
  $\gamma : \DeltaStrat{1} \to B$ is an exit path there is a map
  $f(\gamma(0)) \to f(\gamma(1))$ in $\ETruss$ over the order-preserving map $\reg_p(\gamma)$.

  \begin{enumerate}
    \item Suppose $f(\gamma(0)) = r_i \ord{n}$ and $f(\gamma(1)) = r_j \ord{m}$.
          Since $s \circ \gamma$ is a lift of $\gamma$ to an exit path in $E$ we have that
          $\reg_p(i) = j$ by Lemma~\ref{lem:mesh-reg-monotone} and so there exists a map
          $r_i \ord{n} \to r_j \ord{m}$ over $\reg_p(\gamma)$ in $\ETruss$.
    \item Suppose $f(\gamma(0)) = s_i \ord{n}$ and $f(\gamma(1)) = s_j \ord{m}$.
          Then by Lemma~\ref{lem:mesh-sing-monotone} we have $\sing_p(j + 1) = i + 1$
          which implies $\reg_p(i) \leq j < \reg_p(i + 1)$ through the duality
          $\StrictInt^\op \cong \Delta$.
    \item Suppose $f(\gamma(0)) = s_i \ord{n}$ and $f(\gamma(1)) = r_j \ord{m}$.
          Then $\reg_p(i) \leq j \leq \reg_p(i + 1)$ since exit paths starting at $\gamma(0)$ can reach at most as high or low as exit paths starting in the regular strata directly above or below $\gamma(0)$.\qedhere
  \end{enumerate}
\end{proof}

\begin{prop}
  The functors in Lemma~\ref{lem:mesh-to-truss-1} assemble into a equivalence of spaces
  \[ \EMesh(B) \to \CatInfty(\Exit(B), \ETruss) \]
  that is natural in $B \in \Strat$.
  In particular $\EMesh$ represents an $\infty$-category that is equivalent to $\ETruss$.
  Moreover this functor fits into a square of spaces
  \[
    \begin{tikzcd}
      \EMesh(B) \ar[d] \ar[r] & \CatInfty(\Exit(B), \ETruss) \ar[d] \\
      \BMesh(B) \ar[r] & \CatInfty(\Exit(B), \FinOrd)
    \end{tikzcd}
  \]
\end{prop}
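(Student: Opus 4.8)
The plan is to realise the map in question as the left-hand edge of a homotopy pullback square whose other three edges are already understood, and then deduce the remaining assertions formally.

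\emph{Setting up the square.} Exactly as in the proof of Proposition~\ref{prop:mesh-classify}, Lemma~\ref{lem:mesh-to-truss-1} assembles, for a mesh bundle with section $p : E \to B \times \DeltaTop{n}$, into a functor $\Exit(B \times \DeltaTop{n}) \to \ETruss$ lifting $\reg_p$; promoting this to a map of spaces gives a natural transformation $\Phi_B : \EMesh(B) \to \CatInfty(\Exit(B), \ETruss)$ of presheaves on $\Strat$, naturality holding because pullback of mesh bundles is compatible with pullback of sections and with restriction of $\reg$, $\sing$ and of the functor $f$ of Lemma~\ref{lem:mesh-to-truss-1}. By construction $\Phi_B$ sits inside the square of the statement, in which the left vertical map $\EMesh(B) \to \BMesh(B)$ is the Kan fibration of Lemma~\ref{lem:mesh-forget-section-kan} forgetting the section, the right vertical map $\CatInfty(\Exit(B), \ETruss) \to \CatInfty(\Exit(B), \FinOrd)$ is postcomposition with $\ETruss \to \FinOrd$, and the bottom map is the equivalence of Proposition~\ref{prop:mesh-classify}; the square commutes precisely because $f$ lifts $\reg_p$. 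It therefore suffices to show that the square is a homotopy pullback: since the bottom edge is an equivalence, so will be the top edge $\Phi_B$, and this naturally in $B$.

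\emph{Reduction to a statement about fibres.} As the left vertical map is a Kan fibration, the square is a homotopy pullback iff for every vertex $p : E \to B$ of $\BMesh(B)$ the induced map from the fibre over $p$ to the homotopy fibre over $\reg_p$ of the right vertical map is an equivalence. Unwinding definitions, the fibre of $\EMesh(B) \to \BMesh(B)$ over $p$ is the simplicial set $\Map_{\StratPre \downarrow B}(B, E)$ of sections of $p$. On the other side, $\ETruss$ and $\FinOrd = \Delta$ are $1$-categories with no non-identity isomorphisms, so both $\CatInfty(\Exit(B), \ETruss)$ and $\CatInfty(\Exit(B), \FinOrd)$ are discrete (the core of $\Fun(\Exit(B), -)$ of each is discrete), whence the right vertical map is a map of sets and its homotopy fibre over $\reg_p$ is the set of functors $\Exit(B) \to \ETruss$ lifting $\reg_p$. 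Everything thus reduces to the claim: for each mesh bundle $p : E \to B$, the assignment $s \mapsto f_s$ of Lemma~\ref{lem:mesh-to-truss-1} is a weak equivalence from the space of sections of $p$ to the discrete set of functors $\Exit(B) \to \ETruss$ over $\reg_p$.

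\emph{The main obstacle.} This claim is the heart of the proof; I would establish it by a direct geometric construction parallel to Lemma~\ref{lem:mesh-construct} and Proposition~\ref{prop:mesh-classify}. Fix a triangulation of $B$ compatible with its stratification. A functor $g : \Exit(B) \to \ETruss$ over $\reg_p$ is constant on each stratum of $B$: a functor from a connected $\infty$-groupoid into the $1$-category $\ETruss$ factors through the discrete core of $\ETruss$, and strata of $B$ are path-connected. Hence over each stratum $g$ singles out one stratum of the fibre (a regular open interval, or a singular point), and these subsets vary continuously with the base point because the singular strata of $p$ do (Lemma~\ref{lem:mesh-sing-monotone} and the local triviality of $p$). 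A section $s$ of $p$ with $f_s = g$ is exactly a continuous section whose value over each point lies in the prescribed subset of the fibre; over singular strata it is forced, and over regular strata it is the choice of a point of an open interval. Inducting over the skeleta of the triangulation, and over a $k$-simplex taking the convex combination in the interval coordinate of the already-chosen values over its vertices — exactly the linear interpolation used in the cited proofs — shows that the space of such sections is non-empty and contractible, which yields surjectivity on $\pi_0$ together with contractibility of every fibre, hence the weak equivalence. The delicate points are checking that the prescribed subsets vary continuously and that the interpolation remains inside them, especially near $\partial\IntOpen$ where singular strata of the compactification may run off to $\pm 1$; this bookkeeping is, however, entirely analogous to that already carried out for Lemma~\ref{lem:mesh-construct}.

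\emph{Conclusion.} With the square established as a homotopy pullback, $\Phi_B$ is a natural equivalence, so $\EMesh(-) \simeq \CatInfty(\Exit(-), \ETruss)$ as presheaves on $\Strat$. Since the functor $\cat{C} \mapsto \CatInfty(\Exit(-), \cat{C})$ from $\CatInfty$ to $\PSh(\Strat)$ is fully faithful (the Proposition following Proposition~\ref{prop:strat-exit-ff}), this exhibits $\EMesh$ as representing an $\infty$-category equivalent to $\ETruss$, and the square used above is the one in the statement.
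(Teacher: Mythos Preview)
Your proposal is correct and follows essentially the same route as the paper: construct the map via Lemma~\ref{lem:mesh-to-truss-1}, set up the commutative square, use that the bottom edge is the equivalence of Proposition~\ref{prop:mesh-classify} and the left edge is the Kan fibration of Lemma~\ref{lem:mesh-forget-section-kan}, and reduce to an analysis of the fibres handled by triangulation plus linear interpolation. The only cosmetic difference is that the paper phrases the fibre step as ``contractible connected components plus a $\pi_0$-bijection'' (surjectivity via interpolation, injectivity via contractibility of strata), whereas you phrase it as ``each fibre of $s \mapsto f_s$ is non-empty and contractible''; these are the same argument, and your observation that the right-hand column is discrete (since $\ETruss$ has no non-identity isomorphisms) is a clean way to justify why the fibre comparison suffices.
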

\begin{proof}
  We begin by constructing the map of simplicial sets
  \[ \EMesh(B) \to \CatInfty(\Exit(B), \ETruss). \]
  An $n$-simplex of $\EMesh(B)$ consists of a mesh bundle $p : E \to B \times \DeltaTop{n}$ together with a section $s : B \times \DeltaTop{n} \to E$.
  Then Lemma~\ref{lem:mesh-to-truss-1} determines a functor $\Exit(B \times \DeltaTop{n}) \to \ETruss$.
  By precomposition with the natural equivalence $\Exit(B) \times E\ord{n} \to \Exit(B \times \DeltaTop{n})$ we then obtain an $n$-simplex of $\CatInfty(\Exit(B), \ETruss)$.
  This assignment of simplices then extends to a map of simplicial sets.

  For naturality in the stratified space $B$ observe that the functor constructed in Lemma~\ref{lem:mesh-to-truss-1} is determined by the fibres and thus behaves well under pullback of bundles.

  By Proposition~\ref{prop:mesh-classify} the bottom horizontal map in the square is an equivalence
  and by Lemma~\ref{lem:mesh-forget-section-kan} the left vertical map is a Kan fibration.
  Since $\ETruss \to \FinOrd$ is an isofibration, the vertical map on the right is also a Kan fibration.
  Hence it suffices to verify that the top horizontal map induces an equivalence of fibres.

  Let $p : E \to B$ be a mesh bundle.
  The fibre of $\EMesh(B) \to \BMesh(B)$ over $p$ has contractible connected components, and so it suffices that the horizontal map induces a bijection of connected components.

  For surjectivity suppose that we have a map $f : \Exit(B) \to \ETruss$ which fits into the diagram
  \[
    \begin{tikzcd}
      & \ETruss \ar[d] \\
      \Exit(B) \ar[r, "\reg_p", swap] \ar[ur, "f"] & \FinOrd
    \end{tikzcd}
  \]
  Pick a triangulation of $B$ which is compatible with its stratification.
  For every vertex $x$ of the triangulation, we define $s(x)$ to be the $i$th singular point in $p^{-1}(x)$ when $f(x) = s_i\ord{n}$ or an arbitrary point in the $i$th regular stratum in $p^{-1}(x)$ when $f(x) = r_i\ord{n}$.
  We can then extend $s$ to a section $s : B \to E$ of $p$ by linearly interpolating.

  For injectivity suppose that we have two sections $s_1 : B \to E$ and $s_2 : B \to E$ which determine the same map $f : \Exit(B) \to \ETruss$.
  Then for every $x \in B$ the points $s_1(x)$ and $s_2(x)$ lie in the same stratum of $E$ and since the strata of mesh bundles are contractible it follows that $s_1$ and $s_2$ are homotopic.
\end{proof}

\begin{prop}\label{prop:mesh-universal}
  Let $p : E \to B$ be a mesh bundle.
  Then $E$ is a stratified space, the square of $\infty$-categories
  \[
    \begin{tikzcd}[column sep = 2cm]
      \Exit(E) \ar[d, "p_*"] \ar[r, "(E \times_B E \leftrightarrows E)"] & \EMesh \ar[d] \\
      \Exit(B) \ar[r, "(E \overset{p}{\to} B)"] & \BMesh
    \end{tikzcd}
  \]
  commutes and is cartesian.
\end{prop}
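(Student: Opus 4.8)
The statement has two halves — that $E$ is a stratified space, and that the displayed square commutes and is cartesian — and my plan is to reduce the second half entirely to the first, then establish the first directly, with the one genuinely technical point being the fibrancy of $E$.

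\emph{Reduction of the square to ``$E\in\StratPre$''.} For a stratified space $A$, unwinding definitions shows that a stratified map $f:A\to B$ together with a section of the pulled-back mesh bundle $f^*E\to A$ is the same datum as a single stratified map $\sigma:A\to E$ (the section being $a\mapsto(\sigma(a),a)$, with $f=p\sigma$ recovered). Using Lemma~\ref{lem:mesh-forget-section-kan} to see that the right-hand leg $\EMesh(A)\to\BMesh(A)$ is a Kan fibration, so that the strict pullback of $\Strat(A,B)\to\BMesh(A)\leftarrow\EMesh(A)$ computes the homotopy pullback, it follows that the presheaf $\Strat(-,B)\times_{\BMesh(-)}\EMesh(-)$ on $\Strat$ is pointwise $\Map_{\Top\downarrow\Pos}(-,E)$. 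Once $E$ is known to lie in $\StratPre$, this presheaf is $\Strat(-,E)$, which represents $\Exit(E)$ (last remark of Section~\ref{sec:stratified-spaces}); moreover the projection to $\EMesh$ is by construction the classifying functor of the mesh bundle $E\times_B E\to E$ with its diagonal section, and the projection to $\Strat(-,B)$ is $p_*$. Hence the square commutes and is cartesian as soon as $E$ is a stratified space.

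\emph{$E$ is triangulable with non-empty, path-connected strata.} Setting $f=\sing_p$, Lemma~\ref{lem:mesh-construct} produces a mesh bundle $p':E'\to B$ with $\sing_{p'}=f$ which is PL relative to a chosen triangulation of $B$ compatible with its stratification; since $\BMesh(B)\to\sSet(\Exit(B),\FinOrd)$ is a trivial Kan fibration (proof of Proposition~\ref{prop:mesh-classify}) and $\reg_p=\reg_{p'}$, the bundles $p$ and $p'$ lie in the same contractible fibre, so $E\cong E'$ is triangulable (alternatively one argues directly by straightening the singular sections fibrewise over each simplex of a compatible triangulation). For the strata, note that over a stratum $S\subseteq B$ one has $\Exit(S)=\Sing(S)$, so $\reg_p$ and $\sing_p$ restrict to constant functors on $S$ (their values lie in the discrete cores of $\FinOrd$ and $\StrictInt$); hence the strata of $E$ over $S$ are the graphs of the singular sections, each homeomorphic to $S$, and the regular bands, each a bundle over $S$ with interval fibres — all non-empty and path-connected because $S$ is.

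\emph{$E$ is fibrant (the main obstacle).} It remains to check that $\Exit(E)$ is a quasicategory. Given an inner horn $\Lambda^n_k\to\Exit(E)$ with $0<k<n$, post-compose with $p_*$ and fill in the quasicategory $\Exit(B)$ to obtain $g:\DeltaStrat n\to B$; the horn then exhibits a stratified section of $g^*E\to\DeltaStrat n$ over $\lvert\Lambda^n_k\rvert$ with its subspace stratification, which must be extended over all of $\DeltaStrat n$. This is carried out exactly as in the proofs of Lemmas~\ref{lem:mesh-construct} and~\ref{lem:mesh-forget-section-kan}: fix a compatible triangulation and extend the section cell by cell, using that every regular band and every singular section is contractible and that — because the horn is inner — the exit compatibilities between consecutive strata required of a stratified section are already forced by the given partial section. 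The same argument shows more generally that $p_*:\Exit(E)\to\Exit(B)$ is an inner fibration; since $\Exit(B)$ is a quasicategory, so is $\Exit(E)$, whence $E\in\StratPre$. Combined with the reduction above this proves the proposition. The only non-formal ingredient is this section-extension step, and there the two load-bearing facts are that the base horn can be filled because $B$ is fibrant, and that the fibres of the stratification of a mesh bundle are contractible, which makes the cell-by-cell extension unobstructed.
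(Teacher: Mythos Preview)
Your overall strategy is the reverse of the paper's, and that inversion is the main point of comparison. You first prove that $E$ is a stratified space (triangulable, connected strata, fibrant) and then deduce the cartesian square from representability. The paper instead works with the simplicial space $\Map_{\Top\downarrow\Pos}(\DeltaStrat{-},E)$, which is defined regardless of whether $E$ is yet known to lie in $\StratPre$, establishes the cartesian square~(\ref{eq:mesh-universal-simplicial-spaces}) \emph{first} by the same ``section $=$ lift'' observation you make, and then reads off fibrancy of $E$ for free: three corners of the square are complete Segal spaces and the right leg is a levelwise Kan fibration (Lemma~\ref{lem:mesh-forget-section-kan}), so the fourth corner is a complete Segal space, whence $\Exit(E)$ is a quasicategory. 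This buys the paper an entirely formal proof of fibrancy with no horn-filling at all.

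Your reduction step and your treatment of triangulability and of the path-connected-strata condition are fine (the latter is a point the paper does not spell out). The place where your sketch is thin is the direct inner-horn filling. The appeal to ``exactly as in the proofs of Lemmas~\ref{lem:mesh-construct} and~\ref{lem:mesh-forget-section-kan}'' is not quite apt: those lemmas extend sections across \emph{unstratified} horn inclusions $B\times\HornTop{n}{i}\hookrightarrow B\times\DeltaTop{n}$, where a retraction does the job. Here the horn carries the subspace stratification of $\DeltaStrat{n}$, and the extension must land in a prescribed fibre stratum over the top base stratum while remaining continuous across $d_n$. This can be carried out --- the complement of an inner horn lies entirely in the top stratum, the horn meets that stratum in a connected set so the target fibre stratum is well-defined, and Lemmas~\ref{lem:mesh-reg-monotone} and~\ref{lem:mesh-sing-monotone} force the correct limiting behaviour at $d_n$ --- but it is a genuine argument rather than a repetition of the earlier ones, and your ``cell by cell'' gloss hides exactly the work that the paper's complete-Segal-space manoeuvre avoids.
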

\begin{proof}
  We begin by considering the square of simplicial spaces
  \begin{equation}\label{eq:mesh-universal-simplicial-spaces}
    \begin{tikzcd}[column sep = 2cm]
      \Map_{\Top \downarrow \Pos}(\DeltaStrat{-}, E) \ar[d, "p_*"] \ar[r] & \EMesh(\DeltaStrat{-}) \ar[d] \\
      \Map_{\Top \downarrow \Pos}(\DeltaStrat{-}, B) \ar[r] & \BMesh(\DeltaStrat{-})
    \end{tikzcd}
  \end{equation}
  This square commutes since both sides send a stratified map
  $\DeltaStrat{k} \times \DeltaTop{n} \to E$ to the left mesh bundle in the
  diagram of poset-stratified spaces
  \[
    \begin{tikzcd}
      E \times_B (\DeltaStrat{k} \times \DeltaTop{n}) \ar[r] \ar[d] \pullback &
      E \times_B E \ar[d] \ar[r] \pullback &
      E \ar[d, "p"] \\
      \DeltaStrat{k} \times \DeltaTop{n} \ar[r] & E \ar[r, "p", swap] &
      B
    \end{tikzcd}
  \]
  in which both squares are pullbacks. 
  To show that the square (\ref{eq:mesh-universal-simplicial-spaces}) is cartesian, it suffices to show that it is cartesian pointwise for any $\ord{k} \in \Delta$.
  Since $\EMesh(\DeltaStrat{k}) \to \BMesh(\DeltaStrat{k})$ is a Kan fibration,
  the pullback of spaces is represented by an ordinary pullback of simplicial sets.
  When $\ord{n} \in \FinOrd$ then by the universal property of the pullback the two dashed
  maps in the following diagram are uniquely determined by each other.
  \[
    \begin{tikzcd}
      \DeltaStrat{k} \times \DeltaTop{n} \ar[ddr, bend right, swap, "\id"]
      \ar[drr, bend left = 15, dashed] \ar[dashed, dr] \\
      & (\DeltaStrat{k} \times \DeltaTop{n}) \times_B E \ar[r] \ar[d] \pullback & E \ar[d, "p"] \\
      & \DeltaStrat{k} \times \DeltaTop{n} \ar[r] & B
    \end{tikzcd}
  \]

  Now since $\EMesh$ and $\BMesh$ are $\infty$-categories and $E$ is fibrant,
  the square (\ref{eq:mesh-universal-simplicial-spaces}) exhibits
  $\Map_{\Top \downarrow \Pos}(\DeltaStrat{-}, E)$ as a pullback of complete Segal spaces
  and hence as a complete Segal space itself.
  It follows that $\Exit(E)$ is a quasicategory and so $E$ is fibrant.
  By Lemma~\ref{lem:mesh-construct} we also know that $E$ is triangulable.
  Hence $E$ is a stratified space.
\end{proof}

\begin{definition}
  Let $B$ be a stratified space and $\cat{C}$ an $\infty$-category. Then $\Mesh(B, \cat{C})$ is the simplicial set whose
  $n$-simplices consist of mesh bundles of the form $p : E \to B \times
    \DeltaTop{n}$ together with a functor $\Exit(E) \to \cat{C}$. By pullback along the base
  space the spaces $\Mesh(B, \cat{C})$ arrange into a functor $\Mesh(-, \cat{C}) : \Strat^\op \to \sSet$.
\end{definition}

\begin{lem}\label{lem:mesh-forget-label-fibration}
  Let $B$ be a stratified space and $\cat{C}$ an $\infty$-category. Then the
  canonical map \[\Mesh(B, \cat{C}) \to \BMesh(B) \] is a Kan fibration. In
  particular $\Mesh(-, \cat{C})$ defines a functor
  $\Strat^\op \to \Space$.
\end{lem}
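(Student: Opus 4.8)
The plan is to check directly that $p\colon\Mesh(B,\cat C)\to\BMesh(B)$ is a Kan fibration by solving the lifting problems against the horn inclusions $\Lambda^n_k\hookrightarrow\Delta^n$ for $n\geq 1$ and $0\leq k\leq n$. Unwinding the definitions, such a lifting problem consists of a mesh bundle $q\colon E\to B\times\DeltaTop n$ (the image of $\Delta^n\to\BMesh(B)$), and, for every proper nonempty face $T\subseteq\ord{n}$ with $T\neq\ord{n}\setminus\{k\}$, the restriction $E_T\subseteq E$ of $q$ over $B\times|T|$ (where $|T|\subseteq\DeltaTop n$ is the corresponding closed topological face) together with a compatible family of functors $g_T\colon\Exit(E_T)\to\cat C$. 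By Proposition~\ref{prop:mesh-universal} each $E_T$ is a stratified space, so $\Exit(E_T)$ is an $\infty$-category; and because the structure maps $\Exit(E_{T'})\hookrightarrow\Exit(E_T)$ are inclusions of subcomplexes, the $g_T$ assemble into a single functor $g\colon K\to\cat C$, where $K\subseteq\Exit(E)$ is the colimit in $\sSet$ of the diagram $T\mapsto\Exit(E_T)$ over the face poset of $\Lambda^n_k$. A solution to the lifting problem is then precisely an extension of $g$ along the inclusion $\iota\colon K\hookrightarrow\Exit(E)$.

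I would then argue that $\iota$ is a trivial cofibration of $\infty$-categories — a monomorphism and a categorical equivalence — so that, $\cat C$ being fibrant, the required extension exists. That $\iota$ is a monomorphism is clear, since each $E_T\hookrightarrow E$ is a subspace inclusion. For the equivalence, observe first that for every face $T$ the restriction map $\Exit(E_T)\to\Exit(E)$ is a categorical equivalence: by Proposition~\ref{prop:mesh-universal} it is the base change of the map $\Exit(B\times|T|)\to\Exit(B\times\DeltaTop n)$ along the isofibration $\Exit(E)\to\Exit(B\times\DeltaTop n)$ (itself a base change of $\EMesh\to\BMesh$, modelled by the isofibration $\ETruss\to\FinOrd$), and the former map is a categorical equivalence because $\Exit(B\times Y)\cong\Exit(B)\times\Sing(Y)$ for a trivially stratified space $Y$ while $|T|\hookrightarrow\DeltaTop n$ is a weak homotopy equivalence of contractible spaces. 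The subtle point here is that pullback along $\EMesh\to\BMesh$ must preserve categorical equivalences; since the Joyal model structure is not right proper, I would carry out this step in the complete Segal space model, which is proper, exactly as in the proof of Proposition~\ref{prop:mesh-universal}. Finally, the diagram $T\mapsto\Exit(E_T)$ has all structure and latching maps monomorphisms, hence is cofibrant, so $K$ is a model for its homotopy colimit; the maps $\Exit(E_T)\to\Exit(E)$ form a natural transformation to the constant diagram at $\Exit(E)$ that is an objectwise equivalence, and the homotopy colimit of that constant diagram recovers $\Exit(E)$ because the nerve of the face poset of $\Lambda^n_k$ is contractible. Comparing homotopy colimits then shows $\iota$ is a categorical equivalence.

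Granting this, $p$ is a Kan fibration, and the remaining assertion follows: $\BMesh(B)$ is a Kan complex by Proposition~\ref{prop:mesh-classify}, so the total space $\Mesh(B,\cat C)$ of the Kan fibration $p$ is a Kan complex as well; together with the functoriality in $B$ already built into the definition (pullback of mesh bundles and restriction of labelling functors), this shows $\Mesh(-,\cat C)$ is a functor $\Strat^\op\to\Space$.

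The main obstacle is unpacking the homotopical content hidden in the innocent-looking reduction of the first paragraph: one must recognise the horn datum as a functor out of the colimit $K$ of the facet restrictions and verify that $K$ has the expected homotopy type — a descent statement for $\Exit$, which I handle through cofibrancy of the diagram $T\mapsto\Exit(E_T)$ — and one must cope with the failure of right properness of the Joyal model structure when base-changing the fibration $\EMesh\to\BMesh$, for which complete Segal spaces are the remedy. A more conceptual alternative would exhibit $p$ as the pushforward of the constant family at $\cat C$ along the fibration $\EMesh(B)\to\BMesh(B)$ of Lemma~\ref{lem:mesh-forget-section-kan} (a left fibration, hence exponentiable), using the theory of exponentiable fibrations recalled earlier; this trades the descent argument for bookkeeping with relative functor categories.
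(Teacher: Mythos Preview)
Your proof is correct and takes a more careful, substantially different route than the paper's. The paper's argument is three lines: it identifies the horn datum with a functor $f\colon\Exit(E')\to\cat C$ where $E'=E|_{B\times\HornTop{k}{i}}$, and then extends $f$ to $\Exit(E)$ by precomposing with a retraction $E\to E'$ induced from a topological retraction $\DeltaTop{k}\to\HornTop{k}{i}$. You instead observe that the horn datum only furnishes a functor on the proper subcomplex $K=\bigcup_T\Exit(E_T)\subsetneq\Exit(E')$ --- an exit path in $E'$ can cross between faces of the horn and hence need not lie in any single $\Exit(E_T)$ --- and you then prove $K\hookrightarrow\Exit(E)$ is a Joyal trivial cofibration via a homotopy-colimit comparison: Reedy cofibrancy of the face diagram, objectwise categorical equivalence with the constant diagram at $\Exit(E)$ (using Proposition~\ref{prop:mesh-universal} and contractibility of each $|T|\hookrightarrow\DeltaTop{n}$), and weak contractibility of the indexing poset. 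The paper's approach is much quicker and more geometric, but it glosses over both the $K$ versus $\Exit(E')$ distinction and the question of whether the product retraction $(b,t,h)\mapsto(b,r(t),h)$ is actually stratified (it is only after a fibrewise height adjustment). Your approach is heavier --- the right-properness workaround through complete Segal spaces, the hocolim bookkeeping --- but it makes every step explicit and avoids constructing any stratified retraction at all. The alternative you sketch at the end, pushing forward along the exponentiable fibration $\EMesh(B)\to\BMesh(B)$, would be closer in spirit to the paper's brevity while still being rigorous.
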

\begin{proof}
  We need to solve lifting problems of the form
  \[
    \begin{tikzcd}
      \Lambda_i \ord{k} \ar[d] \ar[r] & \Mesh(B, \cat{C}) \ar[d] \\
      \Delta\ord{k} \ar[r] \ar[ur, dashed] & \BMesh(B)
    \end{tikzcd}
  \]
  Equivalently we are given a pullback square of mesh bundles
  \[
    \begin{tikzcd}
      E' \ar[r] \ar[d] \pullback & E \ar[d] \\
      B \times \HornTop{k}{i} \ar[r] & B \times \DeltaTop{k}
    \end{tikzcd}
  \]
  together with a functor $f : \Exit(E') \to \cat{C}$ which we need to extend to a functor
  $\Exit(E) \to \cat{C}$. This is possible via the retraction of $E' \to E$ induced by
  a retraction of
  $\HornTop{k}{i} \to \DeltaTop{k}$.
\end{proof}

It now follows that for every mesh bundle $p : E \to B$ the induced functor
$p_* : \Exit(E) \to \Exit(B)$ is a truss bundle via the composition of pullback squares
\[
  \begin{tikzcd}
    \Exit(E) \ar[d, "p_*"] \ar[r] \pullback &
    \EMesh \ar[r] \ar[d] \pullback &
    \ETruss \ar[d] \\
    \Exit(B) \ar[r] &
    \BMesh \ar[r] &
    \FinOrd
  \end{tikzcd}
\]
This induces a natural functor $\Mesh(B, \cat{C}) \to \Truss(\Exit(B), \cat{C})$.

\begin{prop}
  Let $B$ be a stratified space and $\cat{C}$ an $\infty$-category. Then
  \[
    \Mesh(B, \cat{C}) \to \Truss(\Exit(B), \cat{C})
  \]
  is an equivalence of spaces. In particular $\Mesh(-, \cat{C})$ represents
  an $\infty$-category $\Mesh(\cat{C})$ of meshes equivalent to the $\infty$-category
  $\Truss(\cat{C})$ of trusses labelled in $\cat{C}$.
\end{prop}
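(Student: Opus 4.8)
The plan is to reduce the statement to a fibrewise comparison over the space of underlying unlabelled mesh bundles, in the style of the proof that $\EMesh$ represents $\ETruss$. First I would record that the functor of the statement sits in a commutative square of spaces
\[
  \begin{tikzcd}
    \Mesh(B, \cat{C}) \ar[d] \ar[r] & \Truss(\Exit(B), \cat{C}) \ar[d] \\
    \BMesh(B) \ar[r] & \CatInfty(\Exit(B), \FinOrd)
  \end{tikzcd}
\]
in which the vertical maps forget the $\cat{C}$-label and the bottom map sends a mesh bundle $p$ to $\reg_p$; the square commutes because the truss bundle $p_* : \Exit(E) \to \Exit(B)$ attached to $p : E \to B$ has underlying functor $\reg_p$. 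By Proposition~\ref{prop:mesh-classify} the bottom map is an equivalence, and by Lemma~\ref{lem:mesh-forget-label-fibration} the left vertical map is a Kan fibration, so it will suffice to show that the top map induces an equivalence on the (homotopy) fibre over each vertex $[p] \in \BMesh(B)$, corresponding to a mesh bundle $p : E \to B$ with classifying functor $\reg_p : \Exit(B) \to \FinOrd$. (The right vertical map is likewise a Kan fibration: under the identifications $\Truss(\Exit(B), \cat{C}) \simeq \CatInfty(\Exit(B), \Truss(\cat{C}))$ of Proposition~\ref{prop:truss-representable} and $\Truss(\Exit(B), *) \simeq \CatInfty(\Exit(B), \FinOrd)$ it is induced by the structure map $\Truss(\cat{C}) \to \FinOrd$, which may be taken to be an isofibration.)

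Next I would identify the two fibres. On the left, an $n$-simplex of the fibre over $[p]$ is a labelling $\Exit(E') \to \cat{C}$ of the family $E' \to B \times \DeltaTop{n}$ obtained by pulling $p$ back along the projection; since $E' \cong E \times \DeltaTop{n}$, this identifies the fibre with the mapping space $\CatInfty(\Exit(E), \cat{C})$, exactly as in the proof that $\EMesh$ represents $\ETruss$. On the right, the fibre computation in the proof of Proposition~\ref{prop:truss-representable} identifies the fibre over $\reg_p$ with the mapping space $\CatInfty(\Exit(B) \times_{\FinOrd} \ETruss, \cat{C})$. Under these identifications the top map becomes precomposition along the canonical equivalence relating $\Exit(E)$ and $\Exit(B) \times_{\FinOrd} \ETruss$, so the statement reduces to showing that this comparison functor is an equivalence.

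This last point is where I expect the real (if soft) work to lie: one must check that the relevant equivalence is genuinely the one induced by the truss-bundle structure on $p_*$, lying over $\reg_p$. By Proposition~\ref{prop:mesh-universal} the square exhibiting $\Exit(E)$ is cartesian, so $\Exit(E) \simeq \Exit(B) \times_{\BMesh} \EMesh$; composing with the equivalence $\EMesh \simeq \ETruss$, which lies over the equivalence $\BMesh \simeq \FinOrd$ of Proposition~\ref{prop:mesh-classify} (the proposition that $\EMesh$ represents $\ETruss$), yields $\Exit(E) \simeq \Exit(B) \times_{\FinOrd} \ETruss$ over $\reg_p$, and this is precisely the comparison pasted together in the composite of pullback squares defining $p_*$. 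Since precomposition along an equivalence is an equivalence of mapping spaces, the top map is a fibrewise equivalence and hence an equivalence; this proves the first assertion.

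Finally, for the representability statement, the equivalence just established is natural in $B \in \Strat$, so together with Proposition~\ref{prop:truss-representable} it produces natural equivalences of presheaves on $\Strat$
\[
  \Mesh(-, \cat{C}) \;\simeq\; \Truss(\Exit(-), \cat{C}) \;\simeq\; \CatInfty(\Exit(-), \Truss(\cat{C})).
\]
Because $\cat{D} \mapsto \CatInfty(\Exit(-), \cat{D})$ is the fully faithful embedding $\CatInfty \hookrightarrow \PSh(\Strat)$ from the discussion after Proposition~\ref{prop:strat-exit-ff}, this exhibits $\Mesh(-, \cat{C})$ as representing an $\infty$-category $\Mesh(\cat{C})$, and the displayed chain of equivalences identifies $\Mesh(\cat{C})$ with $\Truss(\cat{C})$.
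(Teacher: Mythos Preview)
Your proof is correct and follows essentially the same strategy as the paper: both set up the commutative square over $\BMesh(B) \simeq \CatInfty(\Exit(B), \FinOrd)$, use Lemma~\ref{lem:mesh-forget-label-fibration} to reduce to a fibrewise comparison, and identify both fibres over a mesh bundle $p : E \to B$ with $\CatInfty(\Exit(E), \cat{C})$. Your treatment is in fact more careful than the paper's, which simply asserts that the map ``restricts to the identity on $\CatInfty(\Exit(E), \cat{C})$ in the fibres''; you spell out that the right-hand fibre is $\CatInfty(\Exit(B) \times_{\FinOrd} \ETruss, \cat{C})$ and that the comparison is precomposition with the equivalence of Proposition~\ref{prop:mesh-universal}, and you also make the representability consequence explicit.
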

\begin{proof}
  Consider the commutative square of spaces
  \[
    \begin{tikzcd}
      \Mesh(B, \cat{C}) \ar[r] \ar[d] & \Truss(\Exit(B), \cat{C}) \ar[d] \\
      \BMesh(B) \ar[r] & \CatInfty(\Exit(B), \Delta)
    \end{tikzcd}
  \]
  The lower horizontal map is an equivalence by Proposition~\ref{prop:mesh-classify}
  and so it suffices to check that the upper horizontal map induces an equivalence on homotopy fibres.
  Let $\pi : E \to B$ be a mesh bundle over $B$.
  By Lemma~\ref{lem:mesh-forget-label-fibration} the map $\Mesh(B, \cat{C}) \to \BMesh(B)$ of Kan complexes
  is a Kan fibration and so the homotopy fibre over $\pi$ is given by the ordinary fibre of simplicial sets.
  But then the map $\Mesh(B, \cat{C}) \to \Truss(\Exit(B), \cat{C})$ restricts to the identity on
  $\CatInfty(\Exit(E), \cat{C})$ in the fibres.
\end{proof}

We can now iterate this construction as we did for trusses to obtain $\infty$-categories $\Mesh^n(\cat{C})$.

\begin{definition}
  Let $n \geq 0$, let $B$ be a stratified space and let $\cat{C}$ be an $\infty$-category. Then
  $\Mesh^n(B, \cat{C})$ is the simplicial set whose $k$-simplices are sequences
  of mesh bundles
  \[
    \begin{tikzcd}
      E_n \ar[r, "\pi_n"] &
      E_{n - 1} \ar[r, "\pi_{n - 1}"] &
      \cdots \ar[r] &
      E_1 \ar[r, "\pi_1"] &
      E_0 = \cat{B} \times \DeltaTop{k}
    \end{tikzcd}
  \]
  together with a functor $\Exit(E_n) \to \cat{C}$.
  This
  assembles into a functor $\Mesh^n(-, \cat{C}) : \Strat^\op \to \sSet$
  by pullback of bundles.
\end{definition}

Then by a packing construction similar to Construction~\ref{constr:truss-packing} we can see that
$\Mesh^n(-, \cat{C})$ represents the $\infty$-category $\Mesh^n(\cat{C})$ and so $\Mesh^n(\cat{C})$ classifies $n$-fold mesh bundles with labels in an $\infty$-category $\cat{C}$.
From here the theory can be developed further with notions of normalisation and coarsest refining meshes as in~\cite{fct, Heidemann_2022}.
We leave this for future work.

\bibliographystyle{alpha}
\bibliography{paper}

\end{document}